\font\bb=msbm10
\def\Z{\hbox{\bb Z}}
\def\R{\hbox{\bb R}}
\def\N{\hbox{\bb N}}
\newtheorem{theorem}{Theorem}[section]
\newtheorem{lemma}[theorem]{Lemma}
\newtheorem{proposition}[theorem]{Proposition}
\theoremstyle{theorem}
\newtheorem{definition}{Definition}[section]
\theoremstyle{remark}
\renewcommand\epsilon{\varepsilon}
\newcommand\e{\varepsilon}
\newcommand\om{\underline{\omega}}
\def \g{\underline{g}}
\def \G{\overline{g}}
\def \v{\underline{v}}
\def \V{\overline{v}}
\def \u{\underline{u}}
\title{Pulsating fronts for bistable on average reaction-diffusion equations in a time periodic environment}
\author{Benjamin CONTRI \footnote{Address correspondence to Benjamin Contri: benjamin.contri@univ-amu.fr}   \\ \\
\small{Aix Marseille Universit\'e, CNRS, Centrale Marseille}\\
\small{Institut de Math\'ematiques de Marseille, UMR 7373, 13453 Marseille, France}}
\date{}
\begin{document}
\maketitle

\begin{abstract}
\noindent This paper is devoted to reaction-diffusion equations with bistable nonlinearities depending periodically on time. These equations admit two linearly stable states. However, the reaction terms may not be bistable at every time. These may well be a periodic combination of standard bistable and monostable nonlinearities. We are interested in a particular class of solutions, namely pulsating fronts. We prove the existence of such solutions in the case of small time periods of the nonlinearity and in the case of small perturbations of a nonlinearity for which we know there exist pulsating fronts. We also study uniqueness, monotonicity and stability of pulsating fronts.
\end{abstract}

\noindent
\textbf{Keywords:} Bistable reaction diffusion equation; Pulsating front; Time periodicity.

\section{Introduction and main results}
\label{introduction}
In this paper we investigate equations of the type
\begin{equation}
\label{equationp}
u_t-u_{xx}=f^T(t,u),~~~~t \in \R,~x\in \R,
\end{equation}
where 
$$f^T(t+T,u)=f^T(t,u),~~~~\forall t \in \R,~~\forall u \in [0,1],$$
and
\begin{equation}
\label{01}
f^T(t,0)=f^T(t,1)=0,~~~~\forall t\in \R.
\end{equation}
Throughout this article, we assume the function $f^T:\R \times [0,1] \to \R$ is of class $\mathcal{C}^{1}$ with respect to $t$ uniformly for $u \in [0,1]$, and $\mathcal{C}^{2}$ with respect to $x$ uniformly for $t \in \R$. The main hypotheses imposed on the function $f^T$ are the following 
\begin{equation}
\label{bismoyT}
\frac{1}{T}\int_0^T f^T_u(s,0)ds<0~~\text{and}~~\frac{1}{T}\int_0^T f^T_u(s,1)ds<0.
\end{equation}
We say the function $f^T$ is bistable on average if it satisfies hypotheses \eqref{01} and \eqref{bismoyT}.
\vspace{0.5em}\\ 
We begin by recalling the definition of monostable and bistable homogeneous nonlinearities. A function $f:[0,1]\to\R$ is said monostable if it satisfies $f(0)=f(1)=0$ and $f>0$ on (0,1). If in addition to this we have $f(u)\leq f'(0)u$ on $(0,1)$, we say that $f$ is of KPP type.   A function $f:[0,1]\to\R$ is called bistable if there exists $\theta \in (0,1)$ such that $f(0)=f(\theta)=f(1)=0$, $f<0$ on $(0,\theta)$ and $f>0$ on $(\theta,1)$.
\vspace{0.5em}\\
We give now two examples of bistable on average functions $f^T:\R \times [0,1] \to \R$. The first example is a bistable homogeneous function balanced by a periodic function depending only on time. Namely, if $g:[0,1]\to \R$  is a bistable function and $m:\R \to \R$ is a $T$-periodic function, then the function $f^T$ defined by $f^T(t,u)=m(t)g(u)$ is bistable on average if and only if the quantity $\textstyle{\frac{1}{T}\int_0^T m(s)ds}$ is positive and both $g'(0)$ and $g'(1)$ are negative. The second example of bistable on average function is a combination of a bistable homogeneous function and a monostable homogeneous function, both balanced by periodic functions (with the same period) depending only on time. Namely, if $g_1:[0,1]\to \R$ is a monostable function, $g_2:[0,1]\to \R$  is a bistable function, and $m_1,m_2:\R \to \R$  are two $T$-periodic functions, then the function $f^T$ defined by $f^T(t,u)=m_1(t)g_1(u)+m_2(t)g_2(u)$ is bistable on average if and only if we have $\mu_1g_1'(0)+\mu_2g_2'(0)<0~\text{and}~\mu_1g_1'(1)+\mu_2g_2'(1)<0$, where $\textstyle{\mu_i:=\frac{1}{T}\int_0^T m_i(s)ds}$. It is important to note that for a bistable on average function, there can very well exist times $t$ for which the homogeneous function $f^T(t,\cdot)$ is not a bistable function in the sense of homogeneous nonlinearities. Indeed, 
if we set  in the previous case $g_1(u)=u(1-u)$, $g_2(u)=u(1-u)(u-\theta)$ with $0<\theta<1$, $m_1(t)=\sin(2\pi t)$ and $m_2(t)=1-\sin(2\pi t)$, we can notice that although the function $f^1(t,u)=m_1(t)g_1(u)+m_2(t)g_2(u)$ is bistable on average, the homogeneous function $f^1(1/4,\cdot)$ is of KPP type.

\subsubsection*{Context}
The study of reaction-diffusion equations began in the 1930's. Fisher \cite{Fisher} and Kolmogorov, Petrovsky and Piskunov \cite{KPP} were interested in the equation
\begin{equation}
\label{eqhomogene}
u_t-u_{xx}=f(u),~~~~t \in \R,~x\in \R,
\end{equation}
with a nonlinearity $f$ of KPP type. They proved existence and uniqueness (up to translation) of planar fronts $U_c$ of speed $c$, for all speeds $\textstyle{c \geq c^*:=2 \sqrt{f'(0)}}$, that is, for all $c\geq c^*$, there exists a function $u_c$ satisfying \eqref{eqhomogene} and which can be written $u_c(t,x)=U_c(x-ct),~~\text{with}~0<U_c<1,~U_c(-\infty)=1~\text{and}~U_c(+\infty)=0.$
Numerous articles have been dedicated to the study of existence, uniqueness, stability, and other properties of planar fronts for various nonlinearities, see e.g. \cite{AW,Fife,Kan,Lau,Sat}. In particular, for bistable nonlinearities, there exists a unique (up to translation) planar front $U(x-ct)$ and a unique speed $c$ solution of \eqref{eqhomogene}. When the nonlinearity is not homogeneous, there are no  planar front solutions of \eqref{equationp} anymore. For equations with coefficients depending on the space variable, Shigesada, Kawasaki and Teramoto \cite{SKT} defined in 1986  a notion more general than the planar fronts, namely the pulsating fronts. This notion can be extended for time dependent equations as follows.
\begin{definition}
A pulsating front connecting $0$ and $1$ for equation \eqref{equationp} is a solution $u:\R\times \R \to [0,1]$ such that there exists a real number $c$ and a function $U:\R \times \R \to [0,1]$ verifying 
\begin{equation*}
\begin{cases}
u(t,x)=U(t,x-ct),~~~~\forall t\in \R,~~\forall x \in \R,\\
U(\cdot,-\infty)=1,~~U(\cdot,+\infty)=0,~~~~\text{uniformly on}~\R,\\
U(t+T,x)=U(t,x),~~~~\forall t\in \R,~~\forall x \in \R.
\end{cases}
\end{equation*}
\end{definition}
\noindent
So, a pulsating front connecting $0$ and $1$ for equation \eqref{equationp} is a solution couple $(c,U(t,\xi))$ of the problem 
\begin{equation}
\label{travel}
\begin{cases}
U_t-cU_{\xi}-U_{\xi \xi}-f^T(t,U)=0,~~~~\forall (t,\xi) \in \R \times \R, \\
U(\cdot,-\infty)=1,~~U(\cdot,+\infty)=0,~~~~\text{uniformly on}~\R,\\
U(t+T,\xi)=U(t,\xi),~~~\forall (t,\xi) \in \R \times \R. \\
\end{cases}
\end{equation}
For an environment depending on space only, we can refer to \cite{BH02,BHN1,BHN2,BHR,DHZ,FZ,HZ,LZ,W02,Xin,Xin1} for some existence, uniqueness and stability results. As far as environments depending on time (and possibly on space), Nolen, Rudd and Xin in \cite{NRX} were interested in equations with a homogenous nonlinearity and an advection coefficient depending periodically on space and on time. Frejacques in \cite{Frejacques} proved the  existence of pulsating fronts in the case of  a time periodic environment with positive and combustion nonlinearities. Nadin in \cite{Nadin09} proved the existence of pulsating fronts in an environment depending on space and time with KPP type nonlinearity. If we consider Nadin's results in the context of our equation, he imposes in his existence results that the steady state $0$ is unstable in the sense that the principal eigenvalue associated with the equilibrium $0$ is negative. Yet, we shall see in section \ref{characterization} that hypotheses \eqref{01} and \eqref{bismoyT} in our paper are equivalent to the fact that $0$ and $1$ are stable steady states, that is, the principal eigenvalues associated with equilibria $0$ and $1$ are positive. Shen in \cite{Shen2} and \cite{Shen1} defined and proved the existence of pulsating fronts in the case of an almost-periodic environment with a bistable nonlinearity, that is, for functions $f$ which satisfy $f(\cdot,0)=f(\cdot,1)=0$ and are negative near the equilibrium $0$ and positive near the equilibrium $1$ for any time. More exactly, it is assumed that there exists $\gamma>0$ and $\delta \in (0,1/2)$ such that
$$\begin{cases}
f(t,u)\leq -\gamma u,~~~~~~~\forall (t,u) \in \R \times [0,\delta],\\
f(t,u)\geq \gamma (1-u),~~\forall (t,u) \in \R \times [\delta,1].
\end{cases}$$  
Alikakos, Bates and Chen in \cite{ABC} (in the case of a periodic nonlinearity) and Shen in \cite{Shen3} (in the case of an almost periodic nonlinearity) consider as in our paper the equation \eqref{equationp}. They impose the  $\text{Poincar\'e}$ map associated with the function $f^T$ has exactly two stable fixed points and one unstable fixed point in between. They prove under this hypothesis there exists a unique pulsating front $(U,c)$ solution of the problem  \eqref{travel}. They show that for each $t$, the function $U(t,\cdot)$ is monotonic and that $U$, $U_\xi$ and $U_{\xi\xi}$ exponentially approach their limits as $\xi \to \pm\infty$. They also prove a global exponential stability result. In section \ref{caracterisation}, we will see that hypotheses $\eqref{01}$ and \eqref{bismoyT} are equivalent to the fact that $0$ and $1$ are two stable fixed points of the $\text{Poincar\'e}$ map associated with $f^T$. Let us note that in this paper, we do not impose the uniqueness of intermediate fixed points between $0$ and $1$. 
\vspace{0.5em}\\
We now give the main results of the paper.
\subsubsection*{Uniqueness and monotonicity of pulsating fronts}
In this paper, we begin by showing the monotonicity of pulsating front solving \eqref{travel}. Then we use this result to prove the uniqueness (up to translation) of the pair $(U,c)$ solving \eqref{travel}. For that purpose, we use some new comparison principles adapted to hypotheses \eqref{01} and \eqref{bismoyT}, that we show using sliding methods. We have the following theorem.
\begin{theorem}
\label{existenceunicite}
There exists at most one couple $(c,U)$ solution of problem \eqref{travel}, the function $U(t,\xi)$ being unique up to shifts in $\xi$. In this case, we have that 
$$U_\xi(t,\xi)<0,~~~~\forall t \in \R,~~\forall \xi \in \R.$$
\end{theorem}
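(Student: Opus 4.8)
The plan is to combine the sliding method with a parabolic comparison principle tailored to hypotheses \eqref{01} and \eqref{bismoyT}. The comparison principle I would first establish is of the following type: on a half-cylinder $\{\xi\ge M\}\times\R$, if $\underline u$ is a subsolution of the equation in \eqref{travel} with $0\le\underline u\le\delta$ there, $\bar u$ a supersolution lying above $\underline u$ on the section $\{\xi=M\}\times\R$, and $\liminf_{\xi\to+\infty}(\bar u-\underline u)\ge0$ uniformly in $t$, then $\underline u\le\bar u$ on the whole half-cylinder; symmetrically on $\{\xi\le-M\}\times\R$ with a supersolution $\bar u$ satisfying $1-\delta\le\bar u\le1$ playing the role of the ``small'' function near $1$; and likewise with \eqref{equationp} in place of \eqref{travel}, the half-cylinders then being the moving sets $\{x\ge ct+M\}$ and $\{x\le ct-M\}$. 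The point is that although $f^T_u(t,0)$ and $f^T_u(t,1)$ need not be negative at each $t$, \eqref{bismoyT} forces their averages over one period to be negative; working on time windows of length $T$ and multiplying by the time-periodic weight $\exp\big(-\int_0^t f^T_u(s,0)\,ds\big)$ (respectively at $1$) converts this into dissipativity and, through a Gronwall estimate, into the maximum principle. These are the ``new comparison principles'' announced before the statement.

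Granting that, I would prove monotonicity by sliding. Let $(c,U)$ solve \eqref{travel} and, for $\tau\ge0$, set $U^\tau(t,\xi)=U(t,\xi-\tau)$, which again solves the $\xi$-translation-invariant equation in \eqref{travel}; the claim is that $U^\tau\ge U$ on $\R\times\R$ for every $\tau\ge0$. Fix $R$ so that $U\le\delta$ on $\{\xi\ge R\}$ and $U\ge1-\delta$ on $\{\xi\le-R\}$. Since $U(\cdot,-\infty)=1$ and $U(\cdot,+\infty)=0$ uniformly and $0<U<1$, for $\tau$ large one has $U^\tau>U$ on $[0,T]\times\{|\xi|\le R\}$, and the comparison principle on $\{\xi\ge R\}$ and on $\{\xi\le-R\}$ then gives $U^\tau\ge U$ everywhere; hence $\tau^*:=\inf\{\tau\ge0:\ U^{\tau'}\ge U\ \text{for all}\ \tau'\ge\tau\}$ is finite. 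If $\tau^*>0$, then $U^{\tau^*}\ge U$ by continuity, and $w:=U^{\tau^*}-U\ge0$ satisfies a linear parabolic equation $w_t-cw_\xi-w_{\xi\xi}=b(t,\xi)\,w$ with $b$ bounded; since $U(t,\cdot)$ cannot be $\xi$-periodic (its limits at $\pm\infty$ differ), $w\not\equiv0$, so $w>0$ on $\R\times\R$ by the strong maximum principle. Then for $\varepsilon>0$ small one still has $U^{\tau^*-\varepsilon}\ge U$: on $[0,T]\times\{|\xi|\le R\}$ because $w\ge\kappa>0$ there and $U$ is uniformly continuous, and on $\{\xi\ge R\}$, $\{\xi\le-R\}$ by the comparison principle --- contradicting the definition of $\tau^*$. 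Hence $\tau^*=0$, that is $U(t,\xi-\tau)\ge U(t,\xi)$ for all $\tau\ge0$, so $U_\xi\le0$; applying the strong maximum principle to the linear parabolic equation satisfied by $U_\xi$ and discarding $U_\xi\equiv0$ yields $U_\xi<0$ on $\R\times\R$.

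For uniqueness I would argue in two stages. Let $(c_1,U_1)$ and $(c_2,U_2)$ solve \eqref{travel}, and set $u_i(t,x)=U_i(t,x-c_it)$, a solution of \eqref{equationp}. Using the monotonicity just obtained, the uniform limits and $0<U_i<1$, for $\tau$ large one gets $u_1(0,\cdot)\le u_2(0,\cdot-\tau)$, and the comparison principle for \eqref{equationp} propagates this to $u_1(t,\cdot)\le u_2(t,\cdot-\tau)$ for all $t\ge0$, i.e. $U_1(t,\xi)\le U_2\big(t,\xi+(c_1-c_2)t-\tau\big)$. Evaluating at $t=nT$, using the $T$-periodicity of $U_1,U_2$ in $t$ and letting $n\to\infty$: if $c_1>c_2$ the right-hand side tends to $U_2(0,+\infty)=0$, forcing $U_1\equiv0$, a contradiction; hence $c_1\le c_2$, and by symmetry $c_1=c_2=:c$. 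Then $U_1$ and $U_2$ solve \eqref{travel} with the same speed $c$, and the sliding argument of the previous paragraph --- now applied to $U_2(t,\xi-\tau)$ against $U_1(t,\xi)$ --- produces a minimal $\tau^*$ with $U_2(\cdot,\cdot-\tau^*)\ge U_1$; if the inequality were strict, the same ``push-back'' step would contradict minimality, so $U_1(t,\xi)=U_2(t,\xi-\tau^*)$ on $\R\times\R$. This is precisely the uniqueness of $(c,U)$, with $U$ unique up to shifts in $\xi$.

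The main obstacle is the first step. Because $f^T$ is only bistable on average, the reaction term can be genuinely destabilising near $0$ and near $1$ at some instants, so no pointwise-in-$t$ maximum principle is available near the equilibria, in contrast with the classical bistable case where $f(t,u)\le-\gamma u$ for small $u$. One must therefore build the comparison over time windows of length at least $T$ and extract the needed contraction only from the sign of the period averages in \eqref{bismoyT} --- equivalently, from the positivity of the principal periodic eigenvalues at $0$ and $1$ --- while keeping enough uniformity in $\xi$ (and, for \eqref{equationp}, along the moving half-cylinders) to feed the sliding method. Since nothing is assumed about the intermediate zeros of $f^T(t,\cdot)$, this behaviour near $0$ and $1$ together with the monotonicity of the second step must carry the entire proof.
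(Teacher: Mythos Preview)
Your monotonicity argument and the comparison principles you sketch are essentially the paper's: the weight $\exp\big(-\int_0^t f^T_u(s,0)\,ds\big)$ is, up to normalization, precisely the principal periodic eigenfunction $\Phi_{0,f^T}$ associated with the equilibrium $0$, and the paper's Lemmas~\ref{TH1comparaison}--\ref{TH2comparaison} are proved by dividing by this function and running a sliding argument in the quotient. Your sliding proof of $U_\xi<0$ also matches Proposition~\ref{MONOT} line by line.

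There is, however, a genuine gap in your speed-uniqueness step. The claim ``for $\tau$ large one gets $u_1(0,\cdot)\le u_2(0,\cdot-\tau)$'' does \emph{not} follow from monotonicity, the uniform limits, and $0<U_i<1$. Take for instance two strictly decreasing profiles $U_1(0,\xi)=1/(1+e^\xi)$ and $U_2(0,\xi)=1/(1+e^{2\xi})$; the inequality $U_1(0,x)\le U_2(0,x-\tau)$ is equivalent to $x\le 2\tau$, so it fails for $x>2\tau$ whatever $\tau$ is. Near $+\infty$ the two profiles can have different exponential decay rates, and no finite shift of the faster-decaying one will dominate the slower one globally. So your forward parabolic comparison for the Cauchy problem never gets off the ground, and the detour through the fixed frame $(t,x)$ collapses.

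The paper avoids this by staying in the moving frame. Assuming without loss of generality $c_1\ge c_2$, the key observation (Lemma~\ref{AZE}) is that $U_2$, which solves the $c_2$-equation, is a \emph{supersolution} of the $c_1$-equation because $(c_2-c_1)\partial_\xi U_2\ge 0$ by the monotonicity just proved. Then your own half-cylinder comparison principles apply directly to the pair $\underline v=U_1$, $\overline v=U_2(\cdot,\cdot-\tau)$ with speed $c_1$, yielding $U_1(t,\xi)\le U_2(t,\xi-\tau)$ for all $(t,\xi)\in\R^2$ and all $\tau$ large --- no initial-time ordering is ever needed. Sliding $\tau$ down to its infimum $\tau^*$ and using the strong maximum principle forces $U_1\equiv U_2(\cdot,\cdot-\tau^*)$, and subtracting the two equations gives $(c_2-c_1)\partial_\xi U_2\equiv 0$, hence $c_1=c_2$. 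Once you insert this supersolution remark, your final sliding paragraph (with $c_1=c_2$) is correct and coincides with the paper's Proposition~\ref{PPRROO}.
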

\subsubsection*{Asymptotic stability of pulsating fronts}
We now investigate global stability of pulsating fronts. In case of homogeneous bistable nonlinearities, Fife and McLeod in \cite{Fife} proved the global stability of planar fronts. For heterogeneous bistable nonlinearities, Alikakos, Bates and Chen \cite{ABC} (in case depending on time variable) and Ding, Hamel and Zhao \cite{DHZ} (in case depending on space variable) also proved a global exponential stability result.
In our case with assumptions \eqref{01} and \eqref{bismoyT}, the following stability result is proved.
\begin{theorem}
\label{stability}
Assume there exists a pulsating front $U$ with speed $c$ solution of \eqref{travel}. We consider a solution $u$ of the Cauchy problem 
$$\begin{cases}
u_t-u_{xx}=f^T(t,u)~~~~\forall t>0,~~ \forall x \in \R, \\
u(0,x)=h(x),~~~~\hspace{1.35em}~\forall x \in \R,
\end{cases}$$
where the initial condition $h:\R \to [0,1]$  is uniformly continuous. We denote $v(t,\xi)=u(t,x)=u(t,\xi+ct)$. There exists a constant $\gamma \in (0,1)$ depending only on $f^T$ such that if 
$$\liminf\limits_{\xi \to -\infty}h(\xi)>1-\gamma~~\text{and}~~\limsup\limits_{\xi \to +\infty}h(\xi)<\gamma,$$
then, there exists $\xi_0 \in \R$ such that 
$$\lim\limits_{t\to +\infty} \big( v(t,\xi)-U(t,\xi+\xi_0) \big)=0,~~~~\text{uniformly for}~\xi \in \R.$$
\end{theorem}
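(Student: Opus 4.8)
The plan is to run the classical Fife–McLeod supersolution/subsolution argument, adapted to the ``bistable on average'' setting. The starting point is that, by hypotheses \eqref{01} and \eqref{bismoyT}, the equilibria $0$ and $1$ are linearly stable for the time-periodic ODE $\dot{a}=f^T(t,a)$; equivalently the principal eigenvalues of the linearizations at $0$ and at $1$ are positive (this is precisely the characterization announced for section \ref{characterization}). Concretely, there exist $\delta\in(0,1/2)$, $\beta>0$ and a $T$-periodic positive function $t\mapsto\rho(t)$ solving the linearized periodic problem such that, after an integrating-factor change, $f^T(t,u)$ is ``uniformly decreasing'' near $u=0$ and near $u=1$ in the time-integrated sense: there is $\kappa>0$ with $\frac1T\int_0^T f^T_u(s,v)\,ds\le-\kappa$ for $v\in[0,\delta]\cup[1-\delta,1]$. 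I would first record this quantitative stability, then choose $\gamma\in(0,\delta)$ accordingly.

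Next I would build a pair of sub- and supersolutions of the moving-frame equation $V_t-cV_\xi-V_{\xi\xi}-f^T(t,V)=0$ of the standard form
\[
w^{\pm}(t,\xi)=U\!\bigl(t,\xi+\xi_0\mp q(t)\bigr)\pm \sigma(t),
\]
where $q$ and $\sigma$ solve a coupled linear ODE system $q'(t)=C_0\,\sigma(t)$, $\sigma'(t)=-\bigl(\kappa-\eta(t)\bigr)\sigma(t)$ with $\sigma$ decaying to $0$. Here one uses the exponential decay of $U$, $U_\xi$, $U_{\xi\xi}$ as $\xi\to\pm\infty$ (which follows from Theorem \ref{existenceunicite} together with the same linear stability of the limits) to control the error terms in the region where $U$ is close to $0$ or to $1$, and one uses strict negativity of $U_\xi$ together with a compactness/periodicity argument to handle the ``bulk'' region where $U$ is bounded away from $0$ and $1$ uniformly in the periodic time variable. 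The key inequality, as in Fife–McLeod, is that the mismatch produced by shifting and lifting is absorbed by the dissipation coming from the (time-averaged) negativity of $f^T_u$ near the endpoints, with the $T$-periodicity used to pass from the instantaneous estimate (which may fail at some times, since $f^T(t,\cdot)$ need not be bistable) to the averaged one over a full period.

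Then I would show that the hypotheses $\liminf_{\xi\to-\infty}h(\xi)>1-\gamma$ and $\limsup_{\xi\to+\infty}h(\xi)<\gamma$ let us ``trap'' the initial datum: after waiting a fixed time (using the parabolic maximum principle and the periodic ODE flow to push the tails of $v(t_1,\cdot)$ into the stable zones near $0$ and $1$), we can choose $\xi_0$, an initial shift $q(0)$ and an initial height $\sigma(0)$ so that $w^-(t_1,\cdot)\le v(t_1,\cdot)\le w^+(t_1,\cdot)$ on all of $\R$. The comparison principle for \eqref{equationp} — here one must use the \emph{new} comparison principles adapted to \eqref{01}–\eqref{bismoyT} mentioned before Theorem \ref{existenceunicite}, since the endpoints are only stable on average — then propagates this sandwich for all $t\ge t_1$. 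Since $\sigma(t)\to0$ and $q(t)\to q_\infty$ as $t\to+\infty$, and $U(t,\cdot)$ is uniformly continuous and $T$-periodic in $t$, we conclude $v(t,\xi)-U(t,\xi+\xi_0+q_\infty)\to0$ uniformly in $\xi$; relabeling $\xi_0$ gives the statement.

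The main obstacle I anticipate is the construction and verification of the sub/supersolutions in the intermediate region where $U$ is bounded away from $0$ and $1$: there $f^T_u$ has no sign, so one cannot argue pointwise in $t$ and must instead integrate over a period and exploit $U_\xi<0$ together with a uniform lower bound on $|U_\xi|$ on compact $\xi$-sets (uniform over the periodic time circle) to beat the worst-case growth of $f^T_u$. Making the coupled ODE for $(q,\sigma)$ close with the constant $C_0$ coming from this bulk estimate, rather than from a pointwise bound, is the delicate point; everything else is the standard Fife–McLeod machinery transplanted to the time-periodic frame.
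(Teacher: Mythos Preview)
Your overall strategy---Fife--McLeod type sub/supersolutions adapted to the time-periodic setting---is exactly what the paper does, but two concrete points in your proposal do not close.

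\medskip
\textbf{The ansatz $w^\pm=U(t,\xi+\xi_0\mp q(t))\pm\sigma(t)$ with a scalar $\sigma(t)$ is not sufficient.} In the tail $\xi\to+\infty$ (where $U$ is near $0$), the subsolution inequality forces $\sigma'/\sigma\ge f^T_u(t,0)$ pointwise in $t$ (the term $q'U_\xi$ vanishes there and cannot help). Likewise in the tail $\xi\to-\infty$ you need $\sigma'/\sigma\ge f^T_u(t,1)$. A single $T$-periodic weight cannot satisfy both in general: if $f^T_u(t,0)=-1+2\sin(2\pi t/T)$ and $f^T_u(t,1)=-1-2\sin(2\pi t/T)$, both averages equal $-1$, but the pointwise maximum has positive mean, so no periodic $\sigma$ works. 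The paper fixes this by taking a \emph{$\xi$-dependent} correction that interpolates between the two principal eigenfunctions $\Phi_0(t)$ and $\Phi_1(t)$ via a cutoff $\chi$:
\[
\underline u(t,\xi)=U\bigl(t,\xi+\underline\Lambda(t)\bigr)-q_0e^{-\mu t}\Bigl[\chi\bigl(\xi+\underline\Lambda(t)\bigr)\Phi_0(t)+\bigl(1-\chi(\xi+\underline\Lambda(t))\bigr)\Phi_1(t)\Bigr],
\]
with $\underline\Lambda(t)=\underline\omega q_0(1-e^{-\mu t})+\underline\xi$. Near each endpoint the relevant eigenfunction equation produces the needed dissipation pointwise in $t$; no time-averaging is used there. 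In the bulk region the argument is actually pointwise as well: one uses the uniform bound $U_\xi<-C_1<0$ on $[\xi^-,\xi^+]$ so that $\underline\Lambda'(t)U_\xi$ dominates all error terms (including those coming from $\chi'$, $\chi''$). So the ``delicate point'' you flag is not in the bulk but in the tails, and its resolution is the interpolated eigenfunction weight, not a period-averaged ODE for $\sigma$.

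\medskip
\textbf{The final convergence step is a genuine gap.} Your conclusion ``$\sigma(t)\to0$, $q(t)\to q_\infty$, hence $v(t,\xi)-U(t,\xi+\xi_0+q_\infty)\to0$'' does not follow: the sandwich only gives, for large $t$,
\[
U(t,\xi+\xi_0+q_\infty)-o(1)\ \le\ v(t,\xi)\ \le\ U(t,\xi+\xi_0-q_\infty)+o(1),
\]
and $\xi_0+q_\infty\ne\xi_0-q_\infty$ unless $q_\infty=0$, which you have no reason to expect. The paper closes this gap with two additional ingredients: a Liouville-type lemma (any entire solution trapped between two shifts of $U$ must itself be a shift of $U$; the proof uses the comparison Lemmas \ref{TH1comparaison}--\ref{TH2comparaison} and a sliding argument to force $T$-periodicity), together with a compactness argument: pass to the limit of $v(\cdot+nT,\cdot)$ along a subsequence to get an entire trapped solution $v_\infty\equiv U(\cdot,\cdot-\xi_0)$, and then use a Lipschitz-stability lemma (if $|v(0,\cdot)-U(0,\cdot+\xi^\sharp)|\le\varepsilon$ then $|v(t,\cdot)-U(t,\cdot+\xi^\sharp)|\le D\varepsilon$ for all $t\ge0$) to upgrade subsequential to full convergence. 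Without one of these mechanisms (or an iterative shrinking-of-the-shift-gap argument, which you do not indicate), the proof does not conclude.
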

\noindent
Roughly speaking, if the initial condition $h$ "looks like" a front, then $v$ converges to a front as $t \to +\infty$. To prove this theorem we use the method of sub- and supersolution. We adapt here some ideas used in \cite{Fife} in case of a bistable homogeneous nonlinearity to our equation \eqref{equationp} with assumptions \eqref{01} and \eqref{bismoyT}. 
\subsubsection*{Existence and convergence of pulsating fronts for small periods}
We are interested here in understanding the role of the period $T$ of the function $f^T$ in the limit of small periods. We consider nonlinearities of the form 
$$f^T(t,u)=f(\frac{t}{T},u),~~~~\forall t \in \R,~~\forall u \in [0,1].$$
The function $f$ is $1$-periodic in time, and hypothesis \eqref{bismoyT} becomes 
$$\int_0^1 f_u(s,0)ds<0~~\text{and}~~\int_0^1 f_u(s,1)ds<0.$$
Consequently, the sign of the quantities \eqref{bismoyT} do not depend on the period $T$. In order to understand the homogeneization limit as $T \to 0^+$, we define the averaged nonlinearity
\begin{equation}
\label{fonctiong}
\begin{array}{ccccc}
g & : & [0,1] & \to & \R \\
 & & u & \mapsto & \int_0^1 f(s,u)ds \\
\end{array}
\end{equation}
We assume that the function $g$ is a bistable function, that is, there exists $\theta_g \in (0,1)$ such that 
\begin{equation*}
\label{bistable}
\begin{cases}
g(0)=g(\theta_g)=g(1)=0, \\
g(u)<0,~~\forall u \in (0,\theta_g),~~~~g(u)<0,~~\forall u \in (\theta_g,1).\\
\end{cases}
\end{equation*}
We also assume that 
\begin{equation}
\label{sdgtz}
g'(\theta_g)>0.
\end{equation}
Let us noticing that according to \eqref{bismoyT}, one gets $g'(0)<0$ and $g'(1)<0$. We have the following existence theorem.
\begin{theorem}
\label{petiteex}
Under the above assumptions, there exists $T_f>0$ such that for all $T\in(0,T_f)$,  there exists a unique pulsating front $(U_T(t,\xi),c_T)$ solving 
$$\begin{cases}
(U_T)_t-c_T(U_T)_{\xi}-(U_T)_{\xi \xi}=f^{T}(t,U_T),~~~~\text{on}~\R \times \R, \\
U_T(\cdot,-\infty)=1,~~U_T(\cdot,+\infty)=0,~~~~\text{uniformly on}~\R,\\
U_T(t+T,\xi)=U_T(t,\xi),~~~~\forall (t,\xi) \in \R \times \R, \\
U_T(0,0)=\theta_g.
\end{cases}$$
\end{theorem}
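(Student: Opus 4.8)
The strategy is to view the problem, after rescaling time by $T$, as a regular perturbation in the small parameter $T$ of the traveling-wave problem for the averaged homogeneous nonlinearity $g$, and to solve it by the implicit function theorem; the singular coefficient $T^{-1}$ in front of the time derivative is dealt with by an averaging (Lyapunov--Schmidt) reduction. \emph{The limit problem.} Since $g$ is bistable with $g'(0)<0$, $g'(1)<0$ and $g'(\theta_g)>0$, classical results (Fife and McLeod \cite{Fife}) furnish a unique speed $c_0\in\R$ and a profile $\phi\in\mathcal{C}^{\infty}(\R)$ with $\phi''+c_0\phi'+g(\phi)=0$, $\phi(-\infty)=1$, $\phi(+\infty)=0$, $\phi(0)=\theta_g$, $\phi'<0$ on $\R$, all of $\phi-1$, $\phi$ and their derivatives decaying exponentially at $-\infty$, $+\infty$ respectively. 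We work in exponentially weighted Hölder spaces $\mathcal E\subset\mathcal{C}^{2,\alpha}(\R)$, $\mathcal F\subset\mathcal{C}^{0,\alpha}(\R)$ adapted to these decay rates. Because $g$ is bistable, the essential spectrum of $\mathcal L_0 w:=w''+c_0w'+g'(\phi)w$ lies in $\{\mathrm{Re}\,\lambda\le\max(g'(0),g'(1))\}\subset(-\infty,0)$, so $\mathcal L_0:\mathcal E\to\mathcal F$ is Fredholm of index $0$ with $\ker\mathcal L_0=\R\phi'$, while its cokernel is spanned by $e^{c_0\xi}\phi'$, which decays at both ends and satisfies $\int_\R e^{c_0\xi}\phi'(\xi)^2\,d\xi>0$. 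Hence the bordered operator $\mathcal B(w,d):=\bigl(\mathcal L_0 w+d\,\phi',\,w(0)\bigr)$ is an isomorphism of $\mathcal E\times\R$ onto $\mathcal F\times\R$.

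\emph{Averaging.} Writing $f^T(t,u)=f(t/T,u)$ and $V(s,\xi):=U_T(Ts,\xi)$, problem \eqref{travel} becomes: find $(c,V)$ with $V$ $1$-periodic in $s$ and $T^{-1}V_s-cV_\xi-V_{\xi\xi}=f(s,V)$, $V(\cdot,-\infty)=1$, $V(\cdot,+\infty)=0$, $V(0,0)=\theta_g$. Decompose $V=\overline V+\widetilde V$, where $\overline V(\xi):=\int_0^1V(s,\xi)\,ds$ and $\widetilde V$ has zero $s$-mean, and let $\Pi_0$ denote the projection onto zero-mean functions of $s$. Averaging in $s$ annihilates $V_s$ and gives
\begin{equation}\label{avg}-c\overline V_\xi-\overline V_{\xi\xi}=\int_0^1 f(s,\overline V+\widetilde V)\,ds ,\end{equation}
while the complementary part reads $\widetilde V_s=T\,\Pi_0\bigl[c\widetilde V_\xi+\widetilde V_{\xi\xi}+f(s,\overline V+\widetilde V)\bigr]$. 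On zero-mean functions of $s$ the operator $L_T:=T^{-1}\partial_s-c\partial_\xi-\partial_{\xi\xi}$ is invertible with $\|L_T^{-1}\|=O(T)$ (there $T^{-1}\partial_s$ has trivial kernel and the $\xi$-part is parabolic); since $f$ carries no derivative of $\widetilde V$, the fixed-point equation $\widetilde V=L_T^{-1}\Pi_0 f(s,\overline V+\widetilde V)$ is a contraction on a small ball for $T$ small and $(\overline V,c)$ near $(\phi,c_0)$. This defines $\widetilde V=\mathcal T(\overline V,c,T)$, of size $O(T)$, depending in a $\mathcal{C}^1$ way on $(\overline V,c,T)$ including at $T=0$, where $\mathcal T(\cdot,\cdot,0)\equiv 0$.

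\emph{Implicit function theorem and conclusion.} Substituting $\widetilde V=\mathcal T(\overline V,c,T)$ into \eqref{avg}, set
$$\Phi(\overline V,c,T):=\Bigl(-c\overline V_\xi-\overline V_{\xi\xi}-\int_0^1 f\bigl(s,\overline V+\mathcal T(\overline V,c,T)\bigr)\,ds,\ \overline V(0)+\mathcal T(\overline V,c,T)(0,0)-\theta_g\Bigr),$$
a $\mathcal{C}^1$ map from a neighbourhood of $(\phi,c_0,0)$ in $\mathcal E\times\R\times\R$ into $\mathcal F\times\R$. Then $\Phi(\phi,c_0,0)=0$, and since $D_{(\overline V,c)}\mathcal T(\phi,c_0,0)=0$ one computes $D_{(\overline V,c)}\Phi(\phi,c_0,0)(w,d)=\bigl(-\mathcal L_0 w-d\,\phi',\,w(0)\bigr)$, an isomorphism of $\mathcal E\times\R$ onto $\mathcal F\times\R$ because $\mathcal B$ is. The implicit function theorem then yields $T_f>0$ and a $\mathcal{C}^1$ branch $T\mapsto(\overline V_T,c_T)$, with $(\overline V_0,c_0)=(\phi,c_0)$, such that for each $T\in(0,T_f)$ the pair $(\overline V_T,c_T)$ is the unique zero of $\Phi(\cdot,\cdot,T)$ in a fixed neighbourhood of $(\phi,c_0)$. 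Then $U_T(t,\xi):=\overline V_T(\xi)+\mathcal T(\overline V_T,c_T,T)(t/T,\xi)$ solves the PDE, is $T$-periodic in $t$, satisfies $U_T(0,0)=\theta_g$, and $U_T\to\phi$, $c_T\to c_0$ as $T\to 0^+$; the limits at $\xi=\pm\infty$ are inherited from the weighted space, and $0\le U_T\le 1$ on $\R\times\R$ follows by comparison with the stationary sub- and supersolutions $0$ and $1$ using the behaviour as $\xi\to\pm\infty$. Hence $(c_T,U_T)$ solves \eqref{travel}; its uniqueness---the profile being unique up to shifts in $\xi$, which $U_T(0,0)=\theta_g$ then fixes, $U_T(0,\cdot)$ being strictly decreasing---is precisely Theorem \ref{existenceunicite}, applicable because \eqref{01} holds and \eqref{bismoyT} reduces here to $g'(0)<0$, $g'(1)<0$.

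\emph{Main obstacle.} The crux is the averaging step: the factor $T^{-1}$ in front of $U_t$ forbids a direct non-singular formulation, so one must isolate the $s$-average so that the singular term disappears from the reduced equation \eqref{avg}, and then show that the fluctuation $\widetilde V$ has size $O(T)$ uniformly for $(\overline V,c)$ near $(\phi,c_0)$ and that $\mathcal T$ extends in a $\mathcal{C}^1$ manner to $T=0$. This rests on the parabolic smoothing that makes $L_T^{-1}$ bounded with $O(T)$ norm on zero-mean functions, together with the fact that the nonlinearity involves no derivative of the unknown. Everything else---the Fredholm structure of $\mathcal L_0$, which comes from bistability of $g$, the implicit function theorem, and the verification that the reconstructed $U_T$ is a genuine pulsating front---is then routine, and global uniqueness is supplied by Theorem \ref{existenceunicite}.
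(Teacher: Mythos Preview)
Your proof takes a genuinely different route from the paper's. The paper never constructs the front directly: instead it studies the one-dimensional Poincar\'e map $P_T$ of the ODE $y'=f^T(t,y)$ and shows, by elementary arguments, that for $T$ small every intermediate $T$-periodic solution $\theta_T$ converges uniformly to the constant $\theta_g$, so that the associated eigenvalue $\lambda_{\theta_T,f^T}=-T^{-1}\int_0^T f_u^T(s,\theta_T(s))\,ds$ converges to $-g'(\theta_g)<0$; hence any intermediate fixed point of $P_T$ is unstable, and there can be only one. This places $f^T$ squarely under the hypotheses of Alikakos--Bates--Chen \cite{ABC}, and the existence of $(U_T,c_T)$ is then quoted from that paper. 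Your approach, by contrast, is a direct Lyapunov--Schmidt/averaging perturbation of the homogeneous front $(\phi,c_0)$: you solve the mean-free part $\widetilde V$ by a contraction using the $O(T)$ resolvent bound for $L_T=T^{-1}\partial_s-c\partial_\xi-\partial_{\xi\xi}$ on zero-mean periodic functions, substitute into the averaged equation, and close with the implicit function theorem around $(\phi,c_0)$, exploiting the Fredholm structure of $\mathcal L_0$. Both routes appeal to Theorem~\ref{existenceunicite} for global uniqueness.

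What each buys: the paper's argument is short and elementary at the ODE level but outsources the PDE construction entirely to \cite{ABC}. Your argument is self-contained modulo standard spectral/sectorial facts, and it yields more---the $\mathcal C^1$ dependence of $(U_T,c_T)$ on $T$ and the convergence $U_T\to\phi$, $c_T\to c_0$ as $T\to 0^+$ come essentially for free, so Theorem~\ref{petiteconv} is a by-product rather than a separate compactness argument. The price is that several points you only sketch would need to be made precise in a full write-up: the exact choice of weighted H\"older norms so that the Nemytskii operator $u\mapsto f(s,u)$ is $\mathcal C^1$ (the paper assumes only $f\in\mathcal C^2$ in $u$), the uniform-in-$T$ parabolic Schauder estimate giving $\|L_T^{-1}\|=O(T)$ on mean-free functions together with the $\mathcal C^1$ extension of $\mathcal T$ to $T=0$, and the a posteriori verification that $0\le U_T\le 1$ (which needs $f^T$ to be extended outside $[0,1]$ before the maximum-principle comparison with $0$ and $1$ can be invoked). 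None of these is a genuine obstruction---they are routine for this kind of singular averaging---but they are where the work lies in your approach.
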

\noindent
We are then interested in the convergence of the couple $(c_T,U_T)$ as $T \to 0$. We recall from \cite{AW} that for the bistable nonlinearity $g$, there exists a unique planar fronts $(c_g,U_g)$ solving
\begin{equation*}
\begin{cases}
U_g''+c_gU_g'+g(U_g)=0,~~\text{on}~\R,\\
U_g(-\infty)=1,~~U_g(+\infty)=0,\\
U_g(0)=\theta_g.
\end{cases}
\end{equation*}
For any $1 \leq p \leq +\infty$, we define 
$$W^{1,2;p}_{\text{loc}}(\R^2)=\left\{U\in L^p_{\text{loc}}(\R^2)~|~\partial_tU,~\partial_\xi U,~\partial_{\xi \xi}U \in L^p_{\text{loc}}(\R^2)\right\}.$$
For any $k \in \N$ and any $\alpha \in (0,1)$, we also define  $\mathcal{C}^{k,\alpha}(\R^2)$ the space of functions of class $\mathcal{C}^{k}(\R^2)$ with the $k^{th}$ partial derivatives $\alpha$-H\"older.
\vspace{0.5em}\\
In terms of convergence, we extend the function $U_g$ on $\R^2$ by $U_g(t,\xi)=U_g(\xi)$ for any $(t,\xi) \in \R^2$, and we have the following result 
\begin{theorem}
\label{petiteconv}
As $T\to0$, the speed $c_T$ converges to $c_g$ and $U_T$ converges to $U_T$ in $W^{1,2;p}_{\text{loc}}(\R^2)$ weakly and in $\mathcal{C}^{0,\alpha}_{\text{loc}}(\R^2)$, for any $1< p < +\infty$ and for any $\alpha \in (0,1)$.
\end{theorem}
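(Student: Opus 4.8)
The plan is to argue by compactness: obtain uniform‑in‑$T$ bounds on $(c_T,U_T)$, extract converging subsequences, pass to the limit in the equation while averaging out the fast time oscillations of $f^T(t,u)=f(t/T,u)$, and finally invoke the uniqueness of the limiting bistable planar front $(c_g,U_g)$ (from \cite{AW}) to upgrade subsequential convergence to convergence of the whole family.

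\emph{Step 1: uniform a priori bounds.} I first claim $|c_T|\le C$ with $C$ independent of $T\in(0,T_f)$. This is obtained by a priori estimates: multiplying the profile equation by $(U_T)_\xi$, integrating over $(0,T)\times\R$ and using $T$-periodicity in $t$ shows that $c_T$ is controlled by $\int_0^1 g(w)\,dw$ together with energy quantities that are themselves bounded via interior estimates, or alternatively by trapping $U_T$ between explicit $\xi$-translates of sub- and supersolutions built from $U_g$ that are admissible for all small $T$. Granting this, the right-hand side $f(t/T,U_T)$ of $(U_T)_t-c_T(U_T)_\xi-(U_T)_{\xi\xi}=f^T(t,U_T)$ is bounded uniformly in $T$ (since $f$ is $\mathcal C^1$ and $1$-periodic, hence bounded on $\R\times[0,1]$), so interior parabolic $L^p$ estimates on unit cylinders, combined with $T$-periodicity, give that $\{U_T\}_{T\in(0,T_f)}$ is bounded in $W^{1,2;p}_{\text{loc}}(\R^2)$ for every $p\in(1,\infty)$, hence, by Sobolev embedding, in $\mathcal C^{0,\alpha}_{\text{loc}}(\R^2)$ for every $\alpha\in(0,1)$.

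\emph{Step 2: compactness and homogenization limit.} Let $T_n\to0$. Up to a subsequence, $c_{T_n}\to c_\infty$, $U_{T_n}\rightharpoonup U_\infty$ weakly in $W^{1,2;p}_{\text{loc}}(\R^2)$ and $U_{T_n}\to U_\infty$ in $\mathcal C^{0,\alpha}_{\text{loc}}(\R^2)$, hence locally uniformly. Since $U_{T_n}$ is $T_n$-periodic in $t$ and the family is equi-Hölder in $t$ on compacts, $|U_{T_n}(t_1,\xi)-U_{T_n}(t_2,\xi)|\le C T_n^{\alpha}\to0$ for fixed $t_1,t_2,\xi$, so $U_\infty$ is independent of $t$; write $U_\infty=U_\infty(\xi)$. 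From $U_{T_n}(0,0)=\theta_g$ one gets $U_\infty(0)=\theta_g$, and from $(U_{T_n})_\xi<0$ (Theorem~\ref{existenceunicite}) that $U_\infty'\le0$, so $U_\infty$ has limits $\ell_-\ge\theta_g\ge\ell_+$ at $\mp\infty$. Testing the profile equation against $\varphi\in\mathcal C^\infty_c(\R^2)$, the linear terms pass to the limit by weak $W^{1,2;p}$-convergence and $c_{T_n}\to c_\infty$, while for the nonlinearity I write $f(t/T_n,U_{T_n}(t,\xi))=\big[f(t/T_n,U_{T_n})-f(t/T_n,U_\infty(\xi))\big]+f(t/T_n,U_\infty(\xi))$: the bracket tends to $0$ uniformly on $\mathrm{supp}\,\varphi$ because $f$ is Lipschitz in its second variable uniformly in $t$ and $U_{T_n}\to U_\infty$ uniformly there; and for fixed $\xi$, $\int f(t/T_n,U_\infty(\xi))\varphi(t,\xi)\,dt\to\big(\int_0^1 f(s,U_\infty(\xi))\,ds\big)\int\varphi(t,\xi)\,dt=g(U_\infty(\xi))\int\varphi(t,\xi)\,dt$ by the mean-value property of rapidly oscillating periodic functions, so dominated convergence yields $\int\!\!\int f(t/T_n,U_{T_n})\varphi\to\int\!\!\int g(U_\infty)\varphi$. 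Hence $U_\infty$ is a distributional — and then, by bootstrapping, classical — solution of $U_\infty''+c_\infty U_\infty'+g(U_\infty)=0$ on $\R$.

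\emph{Step 3: behavior at infinity and conclusion.} It remains to show $\ell_-=1$ and $\ell_+=0$. Both $\ell_\pm$ are zeros of $g$, so $\ell_-\in\{\theta_g,1\}$ and $\ell_+\in\{0,\theta_g\}$; the degenerate alternatives $\ell_-=\theta_g$ or $\ell_+=\theta_g$ force $U_\infty\equiv\theta_g$ (a monotone ODE solution equal to $\theta_g$ at $0$ and at one end is constant, by uniqueness for the ODE) and must be ruled out. I expect this to be the main obstacle, since local-uniform convergence gives no control at $\xi=\pm\infty$: one needs a non-degeneracy estimate uniform in $T$ — e.g. $U_T(t,-R_0)\ge\theta_g+\delta_0$ and $U_T(t,R_0)\le\theta_g-\delta_0$ for some $R_0,\delta_0>0$ independent of $T$ (equivalently, the transition layer of $U_T$ does not escape to $\pm\infty$ as $T\to0$), which can be extracted from the uniform exponential-tail/bounded-width estimates underlying the construction in Theorem~\ref{petiteex}, or from a uniform comparison with sub- and supersolutions. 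Passing such bounds to the limit gives $U_\infty(-R_0)>\theta_g$ and $U_\infty(R_0)<\theta_g$, hence $\ell_-=1$ and $\ell_+=0$. Then $(c_\infty,U_\infty)$ solves exactly the bistable planar-front problem for $g$ with normalization $U_\infty(0)=\theta_g$, which has a unique solution by \cite{AW}; so $c_\infty=c_g$ and $U_\infty=U_g$. As the limit is independent of the extracted subsequence, the whole family converges: $c_T\to c_g$ and $U_T\to U_g$ in $W^{1,2;p}_{\text{loc}}(\R^2)$ weakly and in $\mathcal C^{0,\alpha}_{\text{loc}}(\R^2)$, for all $p\in(1,\infty)$ and $\alpha\in(0,1)$.
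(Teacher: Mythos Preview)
Your overall strategy---compactness, homogenization of the oscillating nonlinearity, then uniqueness of the bistable front for $g$---is the same as the paper's. Two points deserve comment.

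First, your Step~1 is vague. The energy identity obtained by multiplying by $(U_T)_\xi$ does not obviously bound $c_T$ because the nonlinear term $\int_0^T\!\!\int_\R f(t/T,U_T)(U_T)_\xi\,d\xi\,dt$ is not a priori controlled (the integrand is not a full $\xi$-derivative when $f$ depends on $t$). The paper instead bounds $c_T$ by sandwiching $f^T$ between a fixed KPP function $\overline f$ and the negative of a KPP function $\underline f$, then invoking a comparison lemma from \cite{ABC} on the truncated problems to get $\overline c\le c_T^M\le\underline c$ uniformly in $M$ and $T$.

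Second, and more seriously, your Step~3 has a genuine gap: you do not rule out $U_\infty\equiv\theta_g$, you only assert that uniform-in-$T$ tail or width estimates ``underlying the construction in Theorem~\ref{petiteex}'' should do it. No such estimates are established in the paper (the existence proof goes through \cite{ABC} and gives no $T$-uniform control on the transition layer), and extracting them would be nontrivial. The paper's argument is different and self-contained: assume $U^*\equiv\theta_g$, and for each $n$ shift $U_{T_n}$ by $\xi_n>0$ so that the shifted function $V_n$ satisfies $V_n(0,0)=\theta_g/2$. Extract a limit $V^*$; since $V_n\le U_{T_n}$, one gets $V^*\le\theta_g$, hence $V^*(-\infty)=\theta_g$ and $V^*(+\infty)=0$. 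Multiplying $(V^*)''+c^*(V^*)'+g(V^*)=0$ by $(V^*)'$ and integrating over $\R$ gives $c^*\int_\R[(V^*)']^2=\int_0^{\theta_g}g<0$, so $c^*<0$. Repeating with a shift to the level $(1+\theta_g)/2$ produces a limit $W^*$ connecting $1$ to $\theta_g$, and the same computation gives $c^*>0$. This contradiction excludes $U^*\equiv\theta_g$ without any uniform tail estimate.
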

\noindent
Let us mention that for nonlinearities depending only on space variable, such convergence results were proved by Ding, Hamel and Zhao \cite{DHZ} in the bistable case and by El Smaily \cite{smaily} in the KPP case.
\subsubsection*{Existence and convergence of pulsating fronts for small perturbations}
In this part, we consider some families of functions $f^{T,\e}: \R\times [0,1] \to \R$ having the same regularity as $f^T$ and  such that 
\begin{equation}
\label{benjamin}
\begin{cases}
f^{T,\e}(t,0)=f^{T,\e}(t,1)=0,~~\forall t\in \R,\\
f^{T,\e}(t,u)=f^{T,\e}(t+T,u),~\hspace{-0.1em}~\forall (t,u) \in\R \times [0,1].
\end{cases}
\end{equation}
We also suppose that there exists a bounded function $\omega(\e):(0,+\infty) \to \R$ satisfying $\omega(\e) \xrightarrow{\e \to 0}0$ and such that
\begin{equation}
\label{perturbation2}
|f^T_u(t,u)-f_u^{T,\e}(t,u)|\leq\omega(\e),~~\forall (t,u) \in [0,T] \times [0,1].
\end{equation}
We will first show that if $f^T$ satisfies the hypotheses of existence and uniqueness theorem of Alikakos, Bates and Chen \cite{ABC}, then for $\e>0$ small enough, the $\text{Poincar\'e}$ map associated with $f^{T,\e}$ also verifies it. As consequence, the following theorem holds.
\begin{theorem}
\label{pertex}
We suppose the $\text{Poincar\'e}$ map associated to $f^T$ has exactly two stable fixed points $0$ and $1$ and one unstable  fixed point $\alpha_0$ between both.
\vspace{0.5em}\\
Then, there exists $\e_0>0$ such that for all $\e\in(0,\e_0)$, there exists a unique pulsating front $(U_\e(t,\xi),c_\e)$ solving 
\begin{equation}
\label{travelepsilon}
\begin{cases}
(U_\e)_t-c_\e(U_\e)_{\xi}-(U_\e)_{\xi \xi}-f^{T,\e}(t,U_\e)=0,~~~~\text{on}~ \in\R \times \R, \\
U_\e(\cdot,-\infty)=1,~U_\e(\cdot,+\infty)=0,~~~\text{uniformly on}~\R,\\
U_\e(t+T,\xi)=U_\e(t,\xi),~~~~\forall (t,\xi) \in \R \times \R, \\
U_\e(0,0)=\alpha_0.
\end{cases}
\end{equation}
\end{theorem}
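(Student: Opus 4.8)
The strategy is to show that the structural hypothesis of Alikakos, Bates and Chen \cite{ABC} on the Poincar\'e map is inherited by $f^{T,\e}$ for $\e$ small, and then to quote their existence and uniqueness theorem, fixing the translation with the normalization $U_\e(0,0)=\alpha_0$.

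First I would introduce the time-$T$ Poincar\'e maps $P,P_\e:[0,1]\to[0,1]$ attached to the scalar ODEs $\dot y=f^T(t,y)$ and $\dot y=f^{T,\e}(t,y)$, so that $P(a)$ (resp. $P_\e(a)$) is the value at time $T$ of the solution starting from $a$ at time $0$. Hypothesis \eqref{benjamin} makes $0$ and $1$ fixed points of both maps, and both maps are strictly increasing (uniqueness of trajectories), so $P',P_\e'>0$. The assumption on $f^T$ says that $P-\mathrm{id}$ vanishes on $(0,1)$ only at $\alpha_0$, with $P'(0),P'(1)<1<P'(\alpha_0)$; equivalently $P-\mathrm{id}<0$ on $(0,\alpha_0)$, $P-\mathrm{id}>0$ on $(\alpha_0,1)$, and $\alpha_0$ is a simple zero of $P-\mathrm{id}$. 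The analytic input I would use is that, by continuously differentiable dependence of flows on the vector field — together with the fact that $f^{T,\e}(t,0)=f^T(t,0)=0$ turns \eqref{perturbation2} into $\|f^{T,\e}-f^T\|_{C^0([0,T]\times[0,1])}\le\omega(\e)$ as well — one has $P_\e\to P$ in $C^1([0,1])$ as $\e\to0$. In particular $P_\e'(0)=\exp\big(\int_0^T f^{T,\e}_u(s,0)\,ds\big)\to P'(0)<1$ and similarly at $1$, so $0$ and $1$ remain stable fixed points of $P_\e$; note this says exactly that $f^{T,\e}$ is bistable on average, so Theorem \ref{existenceunicite} applies to it.

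Next I would count the fixed points of $P_\e$ in $(0,1)$. Fix small $\eta,r>0$ with $\eta<\alpha_0-r<\alpha_0+r<1-\eta$. On $[0,\eta]\cup[1-\eta,1]$, continuity of $P'$ gives $P'<1$ (after shrinking $\eta$), hence $P_\e'<1$ there for $\e$ small by $C^1$ convergence, so $P_\e(a)<a$ on $(0,\eta]$ and $P_\e(a)>a$ on $[1-\eta,1)$: no fixed point there besides $0$ and $1$. On the compact sets $[\eta,\alpha_0-r]$ and $[\alpha_0+r,1-\eta]$, $|P-\mathrm{id}|$ is bounded below by a positive constant, so $P_\e-\mathrm{id}$ keeps the (constant) sign of $P-\mathrm{id}$ there for $\e$ small: no fixed point. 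Finally, shrinking $r$ so that $P'-1\ge\kappa>0$ on $[\alpha_0-r,\alpha_0+r]$, we get $P_\e'-1\ge\kappa/2>0$ there for $\e$ small, so $P_\e-\mathrm{id}$ is strictly increasing on this interval; being $C^0$-close to $P-\mathrm{id}$, which is negative at $\alpha_0-r$ and positive at $\alpha_0+r$, it has exactly one zero $\alpha_\e\in(\alpha_0-r,\alpha_0+r)$, and $P_\e'(\alpha_\e)\to P'(\alpha_0)>1$. Hence for $\e$ small $P_\e$ has exactly the three fixed points $0<\alpha_\e<1$, with $0,1$ stable and $\alpha_\e$ unstable, i.e.\ $f^{T,\e}$ satisfies the hypothesis of \cite{ABC}.

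It then remains to apply the theorem of \cite{ABC} to $f^{T,\e}$: it yields a pulsating front $(U_\e,c_\e)$ solving the first three lines of \eqref{travelepsilon}, unique up to shifts in $\xi$, with each $U_\e(t,\cdot)$ continuous and — by Theorem \ref{existenceunicite} — strictly decreasing from $1$ to $0$. Since $\alpha_0\in(0,1)$, the map $\xi\mapsto U_\e(0,\xi)$ is a continuous strictly decreasing bijection of $\R$ onto $(0,1)$, so there is a unique $\xi_\e\in\R$ with $U_\e(0,\xi_\e)=\alpha_0$; replacing $U_\e(t,\xi)$ by $U_\e(t,\xi+\xi_\e)$ gives a solution of the full system \eqref{travelepsilon}, and this normalization makes it the unique one. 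The main obstacle is the perturbation analysis of the Poincar\'e map above, and within it the delicate point is excluding spurious fixed points near the endpoints $0$ and $1$, where $P-\mathrm{id}$ itself degenerates; this is why one argues there through the derivative bound $P_\e'<1$ rather than through a lower bound on $|P_\e-\mathrm{id}|$. Everything else is a citation of \cite{ABC} together with the monotonicity and uniqueness already established in Theorem \ref{existenceunicite}.
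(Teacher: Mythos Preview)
Your proposal is correct and follows essentially the same route as the paper: both arguments show that $P_\e\to P$ in $C^1([0,1])$ (the paper via explicit Gr\"onwall estimates, you by invoking smooth dependence of flows), then partition $[0,1]$ into neighborhoods of $0$, $\alpha_0$, $1$ and the two compact intermediate sets, ruling out fixed points near the endpoints via $P_\e'<1$ and on the intermediate sets via a uniform lower bound on $|P-\mathrm{id}|$, and finally isolating a unique unstable fixed point near $\alpha_0$ via $P_\e'>1$, before quoting \cite{ABC}. Your treatment of the normalization $U_\e(0,0)=\alpha_0$ through the strict monotonicity of $U_\e(0,\cdot)$ is more explicit than the paper's, but not a different idea.
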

\noindent
Let us note that by hypothesis on $f^T$, there exists a unique pulsating front $(U_T,c_T)$ solving problem \eqref{travel} with $U_T(0,0)=\alpha_0$. We have then the following convergence result 
\begin{theorem}
\label{pertconv}
As $\e \to0$, the speed $c_\e$ converges to $c_T$ and $U_\e$ converges to $U_T$ in $W^{1,2;p}_{\text{loc}}(\R^2)$ weakly and in $\mathcal{C}^{0,\alpha}_{\text{loc}}(\R^2)$, for any $1 < p < +\infty$ and for any $\alpha \in (0,1)$.
\end{theorem}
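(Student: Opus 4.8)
The plan is to follow the same compactness scheme as in the proof of Theorem~\ref{petiteconv}, with the small-period parameter replaced by $\e$. First I would establish an a priori bound on the speeds: there is a constant $C>0$, independent of $\e$, with $|c_\e|\leq C$ for every $\e\in(0,\e_0)$. This follows by inserting into \eqref{travelepsilon} suitable super- and subsolutions built from the stable states $0$ and $1$ and travelling with speed $\pm C$ in the $\xi$ variable; the point is that, by \eqref{benjamin} and \eqref{perturbation2}, the functions $f^{T,\e}$ and their $u$-derivatives are bounded uniformly in $\e$, so the same $C$ works for all $\e$. Next, since $U_\e$ solves a uniformly parabolic equation on $\R\times\R$ whose coefficients are bounded uniformly in $\e$ (the drift $c_\e$ being bounded and $f^{T,\e}(t,U_\e)$ being bounded), interior parabolic $L^p$ and Schauder estimates give, for every $p\in(1,+\infty)$ and $\alpha\in(0,1)$, bounds for $U_\e$ in $W^{1,2;p}_{\text{loc}}(\R^2)$ and in the parabolic Hölder spaces that are independent of $\e$.

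With these uniform bounds in hand, take any sequence $\e_n\to0$ and extract a subsequence (not relabelled) along which $c_{\e_n}\to c_\infty$ for some $c_\infty\in\R$ and $U_{\e_n}\to U_\infty$ weakly in $W^{1,2;p}_{\text{loc}}(\R^2)$ and strongly in $\mathcal{C}^{0,\alpha}_{\text{loc}}(\R^2)$. Passing to the limit in \eqref{travelepsilon}, using \eqref{perturbation2} to replace $f^{T,\e_n}$ by $f^T$ and the $T$-periodicity of $U_{\e_n}$ in $t$, the couple $(c_\infty,U_\infty)$ satisfies the first and third lines of \eqref{travel}, the normalization $U_\infty(0,0)=\alpha_0$ (which survives because the convergence is locally uniform), and $0\leq U_\infty\leq1$.

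The main obstacle is to recover the limiting conditions $U_\infty(\cdot,-\infty)=1$ and $U_\infty(\cdot,+\infty)=0$ uniformly on $\R$: local convergence says nothing a priori about the behaviour of $U_\infty$ at $\xi=\pm\infty$, and one must prevent the transition layers of the $U_{\e_n}$ from sliding off to infinity or spreading out. For this I would prove uniform-in-$\e$ exponential estimates on the tails. Since \eqref{perturbation2} implies that $\frac1T\int_0^T f^{T,\e}_u(s,0)\,ds$ and $\frac1T\int_0^T f^{T,\e}_u(s,1)\,ds$ are negative uniformly in $\e$ for $\e$ small, the principal eigenvalues of the linearizations of \eqref{travelepsilon} at $0$ and at $1$ are bounded away from $0$ uniformly in $\e$; combined with the Alikakos--Bates--Chen description of $U_\e$ (monotonicity in $\xi$ and exponential convergence of $U_\e,(U_\e)_\xi,(U_\e)_{\xi\xi}$ to their limits, valid once the Poincaré map of $f^{T,\e}$ has the required bistable structure, as asserted before Theorem~\ref{pertex}), this yields constants $\mu,M>0$ independent of $\e$ with, after the shift fixing $U_\e(0,0)=\alpha_0$, $U_\e(t,\xi)\geq1-Me^{-\mu|\xi|}$ for $\xi\leq-M$ and $U_\e(t,\xi)\leq Me^{-\mu\xi}$ for $\xi\geq M$, uniformly in $t$ and $\e$. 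These bounds pass to $U_\infty$, which is therefore a genuine pulsating front connecting $1$ and $0$ for \eqref{travel} (in particular it is non-constant).

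Finally, since $(c_\infty,U_\infty)$ solves \eqref{travel} with $U_\infty(0,0)=\alpha_0$, the uniqueness statement of Theorem~\ref{existenceunicite} (the normalization pinning down the shift) forces $c_\infty=c_T$ and $U_\infty=U_T$. As this limit is the same for every sequence $\e_n\to0$ and every subsequence, the whole families converge: $c_\e\to c_T$ and $U_\e\to U_T$ in $W^{1,2;p}_{\text{loc}}(\R^2)$ weakly and in $\mathcal{C}^{0,\alpha}_{\text{loc}}(\R^2)$, for every $1<p<+\infty$ and $\alpha\in(0,1)$. The delicate point throughout is the uniformity in $\e$ of the tail estimates of the previous paragraph; the rest is a routine compactness-and-uniqueness argument.
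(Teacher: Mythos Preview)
Your compactness-and-uniqueness scheme matches the paper's, and you correctly identify the recovery of $U_\infty(\cdot,\pm\infty)\in\{0,1\}$ as the crux. The difference is in how you handle that step.

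You assert uniform-in-$\e$ exponential tail bounds $U_\e(t,\xi)\geq 1-Me^{-\mu|\xi|}$ and $U_\e(t,\xi)\leq Me^{-\mu\xi}$, citing the Alikakos--Bates--Chen description. The rate $\mu$ is indeed uniform, since the principal eigenvalues at $0$ and $1$ depend only on $\frac1T\int_0^T f^{T,\e}_u(s,\cdot)\,ds$, which is controlled by \eqref{perturbation2}. But the prefactor $M$ encodes the width of the transition layer and where the front enters the linear regime; it is not determined by the linearizations at $0$ and $1$ alone. Extracting a uniform $M$ from \cite{ABC} means tracking every constant in their construction through its dependence on $f^{T,\e}$, which you have not done (and which you yourself flag as the delicate point). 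So as written there is a gap.

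The paper avoids this entirely by working on the limit object. Since $U^*$ is $T$-periodic in $t$, nonincreasing in $\xi$, and $U^*(0,0)=\alpha_0$, the only candidates for $U^*(\cdot,+\infty)$ are $0$ and the unstable periodic state $w_0$. If $U^*(\cdot,+\infty)=w_0$ then the strong maximum principle forces $U^*\equiv w_0$. One then shifts the $U_\e$ to $V_\e$ with $V_\e(0,0)=\alpha_0/2$, passes to a limit $V^*\leq w_0$, and sets $Z=w_0-V^*>0$. Interior parabolic estimates plus the Harnack inequality (using the $T$-periodicity) bound $Z_\xi/Z$ on $\R^2$; a blow-up at $\xi\to-\infty$ shows that $\beta:=\limsup_{\xi\to-\infty}Z_\xi/Z\geq 0$ satisfies $\beta^2+c^*\beta-\lambda_{w_0,f^T}=0$. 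A symmetric shift with $W_\e(0,0)=(1+\alpha_0)/2$ produces a nonpositive root $\delta$ of the same quadratic. But then $\beta\delta=-\lambda_{w_0,f^T}>0$ (instability of $w_0$), contradicting $\beta\delta\leq 0$. Hence $U^*(\cdot,+\infty)=0$, and a short strong-maximum-principle argument rules out $U^*(\cdot,-\infty)=w_0$.

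In short: the paper trades your uniform-in-$\e$ tail control for a Harnack-based contradiction on the limit that exploits the instability of the intermediate equilibrium. Your route is plausible but incomplete; the paper's is self-contained.
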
 
\subsubsection*{Outline}
Section \ref{characterization} of this paper is devoted to some equivalent formulations of hypotheses \eqref{01} and \eqref{bismoyT}, in particular using $\text{Poincar\'e}$ map and principal eigenvalue. In the following two sections, we prove uniqueness, monotonicity and uniform stability of pulsating front solution of \eqref{equationp} that is, Theorems \ref{existenceunicite} and  \ref{stability}. In Section $5$, we prove Theorems \ref{petiteex} and \ref{petiteconv} on the homogenization limit. In Section $6$, we prove Theorems \ref{pertex} and \ref{pertconv} on a small perturbation of a given pulsating front.
\section{Preliminaries on the characterization of the asymptotic stability of equilibrium state}
\label{characterization}
This part is devoted to the study of various characterizations of bistable on average functions. As we mentioned it previously, it is necessary to know its various points of view to be able to place the results of our paper in the literature already existing. We begin by defining the notion of equilibrium state.
\begin{definition}
Consider the problem
\begin{equation}
\label{equil}
\begin{cases} \partial_tU-c\partial_\xi U-\partial_{\xi \xi}U=f^T(t,U)~~\text{on}~\R^2, \\
 U(t,\xi)=U(t+T,\xi),~~\forall t \in \R, ~\forall \xi \in \R. 
 \end{cases}
 \end{equation}
The $T$-periodic solutions of the equation $y'=f^T(t,y)$ are called equilibrium states of \eqref{equil}. 
\end{definition}
\noindent
Indeed, if $U:\R^2\to[0,1]$ is a solution of \eqref{equil} and if there exists a function $t \mapsto \theta(t)$ such that 
$|U(t,\xi)-\theta(t)| \xrightarrow{\xi \to \infty} 0$ for all $t\in \R,$ then, by standard parabolic estimates, the function $\theta$ is a $T$-periodic solution of the equation $y'=f^T(t,y)$ on $\R$. We recall the notion of uniformly asymptotic stability of such an equilibrium state.
\begin{definition}
\label{defstab}
Let $t \mapsto \theta(t)$ be an equilibrium state of \eqref{equil}. Let $(t,\xi) \mapsto U(t,\xi)$ be another solution of the same equation. We say that $\theta$ is a uniformly asymptotic stable equilibrium  if there exists $\eta>0$ such that 
\begin{center}
$\big( \forall \xi \in \R,~|U(0,\xi)-\theta(0)|<\eta \big)$ $\Longrightarrow$ $\lim\limits_{t\to+\infty}|U(t,\cdot)-\theta(t)|=0$ unif. on $\R$. 
\end{center}
If $\theta$ is not a uniformly asymptotic stable equilibrium, we say it is a uniformly asymptotic unstable equilibrium.\end{definition}
\noindent
We approach now the notion of principal eigenvalue.
\begin{proposition}{\cite{Hess},\cite{NadinVP}}
Let $t \mapsto \theta(t)$ an equilibrium state of \eqref{equil}. There exists a constant $\lambda_{\theta,f^T}$ and a function $\Phi_{\theta,f^T} \in \mathcal{C}^{1,2}(\R\times \R,\R)$ such that 
\begin{equation}
\label{vpp}
\begin{cases}
\partial_t \Phi_{\theta,f^T} -c\partial_\xi \Phi_{\theta,f^T} -\partial_{\xi\xi} \Phi_{\theta,f^T} = f^T_u(t,\theta(t))\Phi_{\theta,f^T} + \lambda_{\theta,f^T} \Phi_{\theta,f^T}~~\text{on}~\R^2, \\
\Phi_{\theta,f^T}>0~~\text{on}~\R^2, \\
\Phi_{\theta,f^T}(\cdot,\xi)~\text{is}~T-\text{periodic},~\forall \xi \in \R,\\
\Phi_{\theta,f^T}(t,\cdot)~\text{is}~1-\text{periodic},~~\forall t \in \R.
\end{cases}
\end{equation}
The real number $\lambda_{\theta,f^T}$ is unique. It is called the principal eigenvalue associated with the function $f^T$ and the equilibrium state $\theta$.
The function $\Phi_{\theta,f^T}$ is unique up to multiplication by a positive constant. It is called the principal eigenfunction associated with the function $f^T$ and the equilibrium state $\theta$.
\end{proposition}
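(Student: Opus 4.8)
The plan is to exploit a special structure of this eigenvalue problem: the zeroth-order coefficient $f^T_u(t,\theta(t))$ depends on $t$ only, not on $\xi$. This makes it natural to look first for an eigenpair in which $\Phi$ is independent of $\xi$. If $\Phi=\Phi(t)$, then $\partial_\xi\Phi$ and $\partial_{\xi\xi}\Phi$ both vanish (so the advection term $-c\partial_\xi\Phi$ disappears as well), and \eqref{vpp} collapses to the scalar linear ODE
$$\Phi'(t)=\big(f^T_u(t,\theta(t))+\lambda\big)\Phi(t),$$
which is to be solved among positive $T$-periodic functions. Dividing by $\Phi$ and integrating over one period forces the compatibility condition $\int_0^T\big(f^T_u(s,\theta(s))+\lambda\big)\,ds=0$, i.e.
$$\lambda_{\theta,f^T}:=-\frac1T\int_0^T f^T_u(s,\theta(s))\,ds,$$
and with this value the function $\Phi_{\theta,f^T}(t,\xi):=\exp\Big(\int_0^t\big(f^T_u(s,\theta(s))+\lambda_{\theta,f^T}\big)\,ds\Big)$ is positive, of class $\mathcal{C}^1$ in $t$ (since $f^T_u$ is continuous and $\theta\in\mathcal{C}^1$), constant — hence $\mathcal{C}^\infty$ and trivially $1$-periodic — in $\xi$, and $T$-periodic in $t$ by the compatibility condition. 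Thus $\Phi_{\theta,f^T}\in\mathcal{C}^{1,2}(\R\times\R)$ solves \eqref{vpp}, which already establishes the existence part and identifies a candidate principal eigenvalue.

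It then remains to prove that $\lambda_{\theta,f^T}$ is the \emph{only} real number for which \eqref{vpp} admits a positive solution, and that the corresponding $\Phi$ is unique up to a positive multiplicative constant. For this I would invoke the Krein--Rutman theory for time-periodic parabolic operators, precisely as developed in \cite{Hess} and used in \cite{NadinVP}: on the space of continuous functions on the torus $(\R/T\Z)\times(\R/\Z)$ — equivalently, functions $T$-periodic in $t$ and $1$-periodic in $\xi$ — one considers the linear operator $\Phi\mapsto\partial_t\Phi-c\partial_\xi\Phi-\partial_{\xi\xi}\Phi-f^T_u(t,\theta(t))\Phi$ together with its resolvent / period map. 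Interior parabolic estimates give compactness of the solution operator, and the parabolic strong maximum principle together with the Harnack inequality give its strong positivity; Krein--Rutman then provides a unique principal eigenvalue, algebraically simple, characterized as the only eigenvalue possessing a positive eigenfunction, that eigenfunction being unique up to a positive scalar. Since the explicit $\Phi_{\theta,f^T}$ built above is a positive eigenfunction, it must coincide, up to normalization, with this principal eigenfunction, and $\lambda_{\theta,f^T}$ with the principal eigenvalue; in particular every principal eigenfunction is, after normalization, the $\xi$-independent function written above.

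The only genuinely delicate point is the functional-analytic setup in the second step: choosing an appropriate Banach space (parabolic Hölder spaces, or $L^2$ on the space-time torus), proving compactness of the solution/period operator via Schauder or $L^p$ parabolic estimates, and deducing its strong positivity from the strong maximum principle — after which uniqueness and simplicity of the principal eigenvalue are standard. As these are exactly the facts proved in \cite{Hess,NadinVP}, in the write-up I would carry out the reduction to the $\xi$-independent ODE explicitly (it yields the clean closed form for $\lambda_{\theta,f^T}$, which is what the later sections actually use) and cite those references for the uniqueness and simplicity statements rather than reproving them.
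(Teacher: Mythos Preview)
Your proposal is correct, and in fact it goes further than the paper itself: the paper does not give a proof of this proposition at all, but simply attributes it to \cite{Hess,NadinVP} and moves on. Your explicit construction of the $\xi$-independent eigenfunction and the closed form $\lambda_{\theta,f^T}=-\frac{1}{T}\int_0^T f^T_u(s,\theta(s))\,ds$ is exactly what the paper derives in the \emph{subsequent} Proposition (Proposition~\ref{enplus}), though there the logic runs in the opposite direction: the paper first takes existence and uniqueness of the principal eigenpair as a black box from the cited references, and then uses the uniqueness-up-to-scalar to deduce that $\Phi_{\theta,f^T}$ must be $\xi$-independent (by translating in $\xi$ and invoking uniqueness), from which the explicit formula for $\lambda_{\theta,f^T}$ follows by integrating the resulting ODE over one period.

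So the two approaches are essentially dual. You build the explicit candidate first and then invoke Krein--Rutman to certify uniqueness and simplicity; the paper invokes Krein--Rutman first and then recovers the explicit form as a corollary. Your route has the advantage of making existence completely elementary (no functional analysis needed for that half), at the cost of still needing the abstract theory for uniqueness --- which you handle by citing the same references the paper does. There is no gap.
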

\noindent
We give an explicit formulation of the principal eigenvalue.
\begin{proposition}
\label{enplus}
The function $\Phi_{\theta,f^T}$ does not depend on the variable $\xi$, and the constant $\lambda_{\theta,f^T}$ is given by 
$$\lambda_{\theta,f^T}=-\frac{1}{T} \int_0^T f^T_u(s,\theta(s)) ds.$$
\end{proposition}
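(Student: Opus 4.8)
The plan is to use the uniqueness of the principal eigenvalue/eigenfunction pair together with an explicit construction. Since $\lambda_{\theta,f^T}$ is characterized as the unique real number for which the system \eqref{vpp} admits a positive solution $\Phi$, it suffices to exhibit \emph{some} positive $\Phi$ and constant $\lambda$ solving \eqref{vpp}, and then the constant produced must equal $\lambda_{\theta,f^T}$ and the function must be (up to a positive multiplicative constant) $\Phi_{\theta,f^T}$.

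First I would look for a solution of the form $\Phi(t,\xi)=\varphi(t)$ depending on $t$ only. Plugging this ansatz into the PDE in \eqref{vpp}, the terms $-c\partial_\xi \Phi$ and $-\partial_{\xi\xi}\Phi$ vanish, and the equation reduces to the linear ODE
$$\varphi'(t)=\big(f^T_u(t,\theta(t))+\lambda\big)\varphi(t),~~~~t\in\R.$$
This has the explicit solution $\varphi(t)=\varphi(0)\exp\!\Big(\int_0^t \big(f^T_u(s,\theta(s))+\lambda\big)\,ds\Big)$, which is automatically positive when $\varphi(0)>0$, and is clearly independent of $\xi$ (hence trivially $1$-periodic in $\xi$). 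The remaining requirement is that $\varphi$ be $T$-periodic in $t$. Imposing $\varphi(T)=\varphi(0)$ forces the exponent over one period to vanish:
$$\int_0^T \big(f^T_u(s,\theta(s))+\lambda\big)\,ds=0,$$
which is solvable for $\lambda$ and gives precisely $\lambda=-\frac{1}{T}\int_0^T f^T_u(s,\theta(s))\,ds$. One then checks $T$-periodicity at all times: for this value of $\lambda$ the exponent $\int_0^t(f^T_u(s,\theta(s))+\lambda)\,ds$ is itself a $T$-periodic function of $t$ (its increment over any interval of length $T$ is the full-period integral, which is zero), so $\varphi$ is $T$-periodic as required.

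Thus $(\lambda,\varphi)$ with this $\lambda$ and $\varphi(t)=\exp\!\big(\int_0^t(f^T_u(s,\theta(s))+\lambda)\,ds\big)$ solves \eqref{vpp}. By the uniqueness statement in the preceding proposition, $\lambda=\lambda_{\theta,f^T}$ and $\Phi_{\theta,f^T}$ coincides with $\varphi$ up to a positive constant; in particular $\Phi_{\theta,f^T}$ does not depend on $\xi$, and the stated formula for $\lambda_{\theta,f^T}$ holds. I do not anticipate a serious obstacle here: the only point requiring a little care is verifying $T$-periodicity of $\varphi$ at every $t$ (not merely matching $\varphi(T)=\varphi(0)$), which as noted follows immediately once $\lambda$ is chosen to kill the one-period integral.
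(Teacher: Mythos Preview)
Your argument is correct and reaches the same conclusion, but the route differs from the paper's. The paper first proves $\xi$-independence directly: normalizing $\|\Phi_{\theta,f^T}\|_\infty=1$, it observes that for any $\xi_0\in\R$ the translate $\Phi_{\theta,f^T}(t,\xi+\xi_0)$ is again a positive solution of \eqref{vpp} with the same sup-norm, hence equals $\Phi_{\theta,f^T}$ by uniqueness; since $\xi_0$ is arbitrary, $\Phi_{\theta,f^T}$ depends on $t$ only. The PDE then collapses to $\Phi'=(f^T_u(t,\theta(t))+\lambda_{\theta,f^T})\Phi$, and dividing by $\Phi$ and integrating over one period gives the formula for $\lambda_{\theta,f^T}$.

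You instead go the constructive way: posit a $\xi$-independent ansatz, solve the resulting ODE explicitly, determine the unique $\lambda$ making the solution $T$-periodic, and then invoke uniqueness of the eigenpair to identify your $(\lambda,\varphi)$ with $(\lambda_{\theta,f^T},\Phi_{\theta,f^T})$. Both arguments hinge on the same uniqueness statement; the paper's translation trick explains \emph{a priori} why any principal eigenfunction must be $\xi$-independent, while your approach has the mild advantage of producing the eigenfunction in closed form.
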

\begin{proof}
We know that $\Phi_{\theta,f^T}$ is unique up to multiplication by a positive constant. Let us suppose for example that $\|\Phi_{\theta,f^T}\|_\infty=1$. Let $\xi_0 \in \R$. The function $\Phi_{\theta,f^T}(t,\xi+\xi_0)$ is also a positive solution of the problem \eqref{vpp}. Yet $\|\Phi_{\theta,f^T}(\cdot,\cdot+\xi_0)\|_\infty=1$. So, by uniqueness $\Phi_{\theta,f^T}(t,\xi+\xi_0)=\Phi_{\theta,f^T}(t,\xi)$ for any $(t,\xi)$ in $\R^2$. Since $\xi_0$ is arbitrary, it follows that $\Phi_{\theta,f^T}$ does not depend on the variable $\xi$. Furthermore, the first equation in \eqref{vpp} becomes 
$$\partial_t \Phi_{\theta,f^T} = f^T_u(t,\theta(t))\Phi_{\theta,f^T} + \lambda_{\theta,f^T} \Phi_{\theta,f^T},~~\forall t \in \R.$$
We divide this equation by $\Phi_{\theta,f^T}$, then we integrate between $0$ and $T$. According to the fact that $\Phi_{\theta,f^T}$ is a $T$-periodic function, we obtain the expression of $\lambda_{\theta,f^T}$ given in Proposition \ref{enplus}.
\end{proof}
\noindent
We give a characterization of the uniformly asymptotic stability of the equilibrium state $\theta$ from the principal eigenvalue $\lambda_{\theta,f^T}$. 
\begin{proposition}
\label{equiv}
If $\lambda_{\theta,f^T}>0$ (resp. $<0$), then the equilibrium state $\theta$ is uniformly asymptotically stable (resp. unstable).
\end{proposition}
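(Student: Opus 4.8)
The plan is to compare an arbitrary solution $U$ of \eqref{equil} with spatially homogeneous sub- and supersolutions built by linearising around $\theta$. Write $a(t):=f^T_u(t,\theta(t))$; Proposition \ref{enplus} gives $\frac1T\int_0^T a(s)\,ds=-\lambda_{\theta,f^T}$, so writing $t=nT+r$ one gets $\int_0^t a(s)\,ds=-\lambda_{\theta,f^T}\,t+b(t)$ with $b$ bounded. By uniform continuity of $f^T_u$ on $[0,T]\times[0,1]$, for each $\eta>0$ there is $\delta_0>0$ with $|f^T_u(t,u)-a(t)|\le\eta$ whenever $|u-\theta(t)|\le\delta_0$; since $\theta$ is either $\equiv 0$, $\equiv 1$, or (by ODE uniqueness together with $f^T(t,0)=f^T(t,1)=0$) valued in a compact subinterval of $(0,1)$, one may shrink $\delta_0$ so all the translates used below stay in $[0,1]$. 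In particular, for $0\le\varepsilon\le\delta_0$ one has $\big|f^T(t,\theta(t)\pm\varepsilon)-f^T(t,\theta(t))\mp a(t)\varepsilon\big|\le\eta\varepsilon$, using $f^T(t,\theta(t)+\varepsilon)-f^T(t,\theta(t))=\varepsilon\int_0^1 f^T_u(t,\theta(t)+s\varepsilon)\,ds$.

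\emph{Stable case} ($\lambda_{\theta,f^T}>0$). Fix $\eta\in(0,\lambda_{\theta,f^T})$ and set $\rho(t)=\exp\big(\int_0^t (a(s)+\eta)\,ds\big)$, so $\rho>0$, $\rho(0)=1$, $\rho$ is bounded above by some $M\ge 1$, and $\rho(t)\to 0$ exponentially. Using $\theta'=f^T(t,\theta)$, $\rho'=(a+\eta)\rho$ and the estimate above, a direct computation shows $\overline U(t):=\theta(t)+\frac{\delta_0}{M}\rho(t)$ and $\underline U(t):=\theta(t)-\frac{\delta_0}{M}\rho(t)$ are respectively a super- and a subsolution of \eqref{equil}, both remaining within $\delta_0$ of $\theta$. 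Taking the stability radius $\eta_{\mathrm{stab}}=\delta_0/M$: if $\sup_\xi|U(0,\xi)-\theta(0)|<\eta_{\mathrm{stab}}$ then $\underline U(0)\le U(0,\cdot)\le\overline U(0)$, and the parabolic comparison principle (applied to $\overline U-U$ and $U-\underline U$, whose differences satisfy a linear parabolic inequality with bounded coefficients) gives $|U(t,\xi)-\theta(t)|\le\frac{\delta_0}{M}\rho(t)\to 0$ uniformly in $\xi$. Hence $\theta$ is uniformly asymptotically stable; when $\theta\equiv 0$ or $\theta\equiv 1$ only the relevant one-sided barrier is needed.

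\emph{Unstable case} ($\lambda_{\theta,f^T}<0$). Here it is cleanest to use that spatially homogeneous solutions of \eqref{equil} are exactly the trajectories of $y'=f^T(t,y)$, and to study the Poincar\'e (time-$T$) map $P$ near its fixed point $\theta(0)$. The variational equation gives $P'(\theta(0))=\exp\big(\int_0^T a(s)\,ds\big)=e^{-T\lambda_{\theta,f^T}}>1$, so $\theta(0)$ is a repelling fixed point: choosing $\delta>0$ and $\mu'>1$ with $|P(z)-\theta(0)|\ge\mu'|z-\theta(0)|$ for $|z-\theta(0)|\le\delta$, an elementary induction (using injectivity of the flow map $P$) shows that the only point whose forward orbit converges to $\theta(0)$ is $\theta(0)$ itself. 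Thus for any prospective radius $\eta_0>0$ there is $a$ with $0<|a-\theta(0)|<\eta_0$ and $P^n(a)\not\to\theta(0)$; the homogeneous solution $U(t,\xi):=y_a(t)$ satisfies $\sup_\xi|U(0,\cdot)-\theta(0)|<\eta_0$ but $\|U(nT,\cdot)-\theta(nT)\|_\infty\not\to 0$, contradicting uniform asymptotic stability. Hence $\theta$ is uniformly asymptotically unstable. (Alternatively one can argue purely by PDE methods, using a subsolution $\theta(t)+b\rho(t)$ with $\rho(t)=\exp(\int_0^t(a(s)-\eta)\,ds)\to\infty$ that forces any solution starting slightly above $\theta$ to leave every $\delta_0$-neighbourhood.)

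The difficulties are technical rather than conceptual: justifying the comparison principle on the unbounded domain $\R$ (done by linearising $U-\theta$ and invoking the maximum principle for bounded solutions, noting $\overline U,\underline U$ depend only on $t$), and keeping the barriers inside the domain $[0,1]$ of $f^T$, which is exactly what forces the case distinction on $\theta$ and the use of one-sided barriers near the equilibria $0$ and $1$.
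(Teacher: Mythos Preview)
Your proof is correct and follows essentially the same strategy as the paper. In the stable case your barrier $\rho(t)=\exp\big(\int_0^t(a(s)+\eta)\,ds\big)$ is, up to normalisation, exactly the paper's $e^{-\lambda_{\theta,f^T} t/2}\Phi_{\theta,f^T}(t)$ (take $\eta=\lambda_{\theta,f^T}/2$), and the super/subsolution comparison is identical. In the unstable case both arguments reduce to spatially homogeneous solutions; the paper builds an explicit ODE subsolution $\chi_\e(t)=\theta(t)+\e e^{-\lambda t/2}\Phi(t)$ and iterates the comparison on successive periods, whereas you phrase the same repulsion via $P'(\theta(0))=e^{-T\lambda_{\theta,f^T}}>1$, which is precisely the content of the proposition immediately following this one in the paper --- so you are effectively anticipating that computation, which makes your presentation a bit more compact but not conceptually different.
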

\begin{proof}
We are going to handle the case where $\lambda_{\theta,f^T}>0$. We consider a solution $U(t,\xi)$ of \eqref{equil}. We saw that the function $\Phi_{\theta,f^T}=\Phi_{\theta,f^T}(t)$ satisfies \\
$$\begin{cases}
\partial_t \Phi_{\theta,f^T} = f^T_u(t,\theta(t))\Phi _{\theta,f^T}+ \lambda_{\theta,f^T} \Phi_{\theta,f^T}~~\text{on}~\R, \\
\Phi_{\theta,f^T}>0, \\
\Phi_{\theta,f^T}~\text{is}~T-\text{periodic}.\\
\end{cases}$$
There exists $\e_0>0$ small enough such that for any $t\geq 0$ we have
\begin{multline*}
 -\frac{\lambda_{\theta,f^T}}{4}\e_0e^{-\frac{\lambda_{\theta,f^T}}{2} t}\Phi_{\theta,f^T}(t)
  \leq f^T(t,\theta(t)+\e_0e^{-\frac{\lambda_{\theta,f^T}}{2} t}\Phi_{\theta,f^T}(t))-
  \\ f^T(t,\theta(t))-f^T_u(t,\theta(t))\e_0e^{-\frac{\lambda_{\theta,f^T}}{2} t}\Phi_{\theta,f^T}(t) 
 \leq \frac{\lambda_{\theta,f^T}}{4}\e_0e^{-\frac{\lambda_{\theta,f^T}}{2} t}\Phi_{\theta,f^T}(t).
 \end{multline*}
We note $\chi(t)=\theta(t)+\e_0e^{-\frac{\lambda_{\theta,f^T}}{2} t}\Phi_{\theta,f^T}(t)$. For any $t \geq 0$, we have 
$$\chi_t(t) -c\chi_{\xi}(t)-\chi_{\xi \xi}(t)\\ -f^T(t,\chi(t)) \geq \e_0e^{-\frac{\lambda_{\theta,f^T}}{2} t}\Phi_{\theta,f^T}(t)[\lambda_{\theta,f^T}-\frac{\lambda_{\theta,f^T}}{2}-\frac{\lambda_{\theta,f^T}}{4}]>0.$$
If we suppose that for any $\xi$ in $\R$, we have $U(0,\xi)\leq \theta(0)+\e_0\Phi_{0,f^T}(0)$, then, applying the maximum principle, we have that 
$$U(t,\xi)\leq \theta(t)+\e_0e^{-\frac{\lambda_{\theta,f^T}}{2} t}\Phi_{\theta,f^T}(t),~~\forall t \geq 0,~~\forall \xi \in \R.$$
In the same way, possibly reducing $\e_0$, one can show that if we suppose for any $\xi \in \R$ that we have $ \theta(0)-\e_0\Phi_{0,f^T}(0) \leq U(0,\xi)$, then 
$$\theta(t)-\e_0e^{-\frac{\lambda_{\theta,f^T}}{2} t}\Phi_{\theta,f^T}(t)\leq U(t,\xi),~~\forall t \geq 0,~~\forall \xi \in \R.$$
Consequently, $\lim\limits_{t\to+\infty}|U(t,\xi)-\theta(t)|=0$ uniformly on $\R$.
\vspace{0.5em}\\
We are now interested in case where $\lambda_{\theta,f^T}<0$. Let $\eta>0$. There exists $\e_{0,T}>0$ small enough such that $\textstyle{\e_{0,T}\leq  \frac{\eta}{\Phi_{\theta,f^T}(0)}}$ and,  for any $t \in [0,T]$ and any $\e \in (0,\e_{0,T})$ we have 
\begin{multline*}
 \frac{\lambda_{\theta,f^T}}{4}\e e^{-\frac{\lambda_{\theta,f^T}}{2} t}\Phi_{\theta,f^T}(t)
  \leq f^T(t,\theta(t)+\e e^{-\frac{\lambda_{\theta,f^T}}{2} t}\Phi_{\theta,f^T}(t))-
  \\ f^T(t,\theta(t))-f^T_u(t,\theta(t))\e e^{-\frac{\lambda_{\theta,f^T}}{2} t}\Phi_{\theta,f^T}(t) 
 \leq -\frac{\lambda_{\theta,f^T}}{4}\e e^{-\frac{\lambda_{\theta,f^T}}{2} t}\Phi_{\theta,f^T}(t).
 \end{multline*}
We note $\chi_\e(t)=\theta(t)+\e e^{-\frac{\lambda_{\theta,f^T}}{2} t}\Phi_{\theta,f^T}(t)$. The same calculations as previously give us that for any $t \in [0,T]$ we have
$$(\chi_\e)_t(t) -c(\chi_\e)_{\xi}(t)-(\chi_\e)_{\xi \xi}(t) -f^T(t,\chi_\e(t)) \leq 0.$$
We define $U_\e:\R^+\times \R \to \R$ solution of the Cauchy problem
$$\begin{cases}
(U_\e)_t -c(U_\e)_{\xi}-(U_\e)_{\xi \xi} -f^T(t,U_\e)~~\text{on}~\R^+ \times \R, \\
U_\e(0,\cdot)=\chi_\e(0)~~\text{on}~\R.
\end{cases}$$
Applying the maximum principle on $[0,T] \times \R$ to the function $U_\e$ and $\chi_\e$, we obtain that  $U_\e \geq \chi_\e$ $\text{on}~[0,T]\times \R.$ In particular, since $\chi_\e$ is a nondecreasing function on $[0,T]$, we have 
\begin{equation}
\label{1205}
U_\e(T,\cdot)\geq \chi_\e(0)~~\text{on}~\R.
\end{equation}
The function $U_\e(\cdot+T,\cdot)$ satisfies the same equation as $U_\e$. According to \eqref{1205}, we can apply the maximum principle  on $[0,T] \times \R$ to the function $U_\e(\cdot+T,\cdot)$ and $\chi_\e$. We obtain for $t=T$ that
\begin{equation*}
U_\e(2T,\cdot)\geq \chi_\e(T) \geq \chi_\e(0)~~\text{on}~\R.
\end{equation*}
For any $n \in \N^*$, we can show by induction that
\begin{equation*}
U_\e(nT,\cdot)\geq \chi_\e(0)~~\text{on}~\R.
\end{equation*}
According to the fact that $\chi_\e(0) > \theta(0),$ and that $\theta(nT)=\theta(0)$ we have that $\lim\limits_{t \to +\infty} (U_\e(t,\cdot)-\theta(t) )\neq 0$ for any $\xi \in \R$, although we have $|U_\e(0,\xi)-\theta(0)|\leq \eta$, for any $\xi \in \R$
\end{proof}
\noindent
Finally, we connect now the previous notions with the notion of $\text{Poincar\'e}$ map associated with the function $f^T$. \begin{definition}
\label{defPoin}
For any $\alpha \in [0,1]$, let $w(\alpha,\cdot)$ be the solution of the Cauchy problem 
\begin{equation} 
\label{diff}
\begin{cases}
y'=f^T(t,y), \\
y(0)=\alpha.
\end{cases}
\end{equation} 
The $\text{Poincar\'e}$ map associated with $f^T$ is the function $P:[0,1] \to [0,1]$ such that 
$$P(\alpha):=w(\alpha,T).$$
Let $\alpha_T$ be a fixed point of $P$.
We say that $\alpha_T$ is stable (resp. unstable) if $P'(\alpha_T)<1$ (resp. $P'(\alpha_T)>1$).
\end{definition}
\noindent
We give the link between equilibrium states of \eqref{equil} and fixed points of  the $\text{Poincar\'e}$ map associated with $f^T$. First of all, it follows from the definition of $P$ that a real number $\alpha \in [0,1]$ is a fixed point of $P$ if and only if $w(\alpha,\cdot)$ is an equilibrium state of \eqref{equil}.
\begin{proposition}
Let $\alpha$ be a fixed point of $P$. We have 
$$P'(\alpha)=e^{-T{\lambda_{\omega(\alpha,\cdot),f^T}}}.$$
\end{proposition}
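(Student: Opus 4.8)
The plan is to exploit the classical fact that the flow of an ODE depends differentiably on its initial datum, and to recognize the resulting variational equation as the (spatially constant) principal eigenvalue equation of Proposition \ref{enplus}.

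First I would fix a fixed point $\alpha$ of $P$, so that $t\mapsto w(\alpha,t)$ is a $T$-periodic solution of $y'=f^T(t,y)$, i.e. an equilibrium state of \eqref{equil} in the sense of our definitions. Since $f^T$ is of class $\mathcal{C}^1$ with respect to $u$ (uniformly in $t$), standard ODE theory on differentiable dependence on initial conditions guarantees that $\alpha\mapsto w(\alpha,t)$ is $\mathcal{C}^1$ and that $z(t):=\partial_\alpha w(\alpha,t)$ solves the variational (linearized) Cauchy problem
\begin{equation*}
z'(t)=f^T_u\bigl(t,w(\alpha,t)\bigr)\,z(t),\qquad z(0)=1.
\end{equation*}
Integrating this scalar linear equation explicitly gives
\begin{equation*}
z(t)=\exp\!\left(\int_0^t f^T_u\bigl(s,w(\alpha,s)\bigr)\,ds\right),
\end{equation*}
and in particular $P'(\alpha)=\partial_\alpha w(\alpha,T)=z(T)=\exp\!\bigl(\int_0^T f^T_u(s,w(\alpha,s))\,ds\bigr)$.

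It then remains to identify the exponent. By Proposition \ref{enplus} applied to the equilibrium state $\theta=w(\alpha,\cdot)=\omega(\alpha,\cdot)$, the associated principal eigenvalue is
\begin{equation*}
\lambda_{\omega(\alpha,\cdot),f^T}=-\frac{1}{T}\int_0^T f^T_u\bigl(s,\omega(\alpha,s)\bigr)\,ds,
\end{equation*}
so that $\int_0^T f^T_u(s,w(\alpha,s))\,ds=-T\lambda_{\omega(\alpha,\cdot),f^T}$, which yields $P'(\alpha)=e^{-T\lambda_{\omega(\alpha,\cdot),f^T}}$ as claimed. One could alternatively bypass Proposition \ref{enplus} and argue directly that $\Phi(t):=z(t)/z(\cdot)$-normalization, combined with $T$-periodicity, forces $z(T)$ to be the Floquet multiplier — but invoking Proposition \ref{enplus} is cleaner.

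The only genuinely delicate point is the justification of $\mathcal{C}^1$ dependence of the flow on the initial value $\alpha$ and the derivation of the variational equation; this is entirely standard given the assumed $\mathcal{C}^1$ regularity of $f^T$ in $u$, so in fact there is no serious obstacle — the proof is essentially a one-line computation once differentiable dependence is invoked. I would simply cite the relevant ODE theory and carry out the integration.
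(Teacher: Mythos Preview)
Your proof is correct and follows essentially the same approach as the paper: differentiate the flow with respect to the initial datum to obtain the linear variational equation, integrate it explicitly, and then invoke Proposition~\ref{enplus} to identify the exponent with $-T\lambda_{\omega(\alpha,\cdot),f^T}$. The paper's argument is virtually identical, only slightly more terse in its justification of the differentiation step.
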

\begin{proof}
We have $$\partial_t w(\alpha,t)= f^T(t,w(\alpha,t)),~~\forall t \in \R.$$
Differentiating with respect to $\alpha$, we obtain that $\partial_\alpha w(\alpha,\cdot)$ solves the linear ODE $y'=yf^T_u(s,w(\alpha,s))$. It follows that $$\partial_\alpha w(\alpha,t)=\partial_\alpha w(\alpha,0) e^{\int_0^tf^T_u(s,w(\alpha,s))ds},~~\forall t \in \R.$$
If we take $t=T$, we have that $\partial_\alpha w(\alpha,T)=\partial_\alpha w(\alpha,0) e^{\int_0^Tf^T_u(s,w(\alpha,s))ds}$, and since $\partial_\alpha w(\alpha,0)=1$, we infer that $\partial_\alpha w(\alpha,T)=e^{\int_0^Tf^T_u(s,w(\alpha,s))ds}$. In other words, by Proposition \ref{enplus}, $P'(\alpha)=e^{-T{\lambda_{\omega(\alpha,\cdot),f^T}}}.$
\end{proof}
\noindent
Consequently, the fact that a fixed point $\alpha$ of the $\text{Poincar\'e}$ map associated with $f^T$ is stable (resp. unstable) in the sense of Definition \ref{defPoin} is equivalent to the fact that the principal eigenvalue associated with $w(\alpha,\cdot)$ and $f^T$ is positive (resp. negative), that is, by Proposition \eqref{equiv}, the solution $w(\alpha,\cdot)$ of \eqref{diff} is a uniformly asymptotic stable (resp. unstable) equilibrium of \eqref{equil}.
\vspace{0.5em}\\
In particular, in our paper, the hypothetis \eqref{01} implies that $0$ and $1$ are two fixed points of the $\text{Poincar\'e}$ map associated with $f^T$, and the condition \eqref{bismoyT} is a condition of positivity of the principal eigenvalues associated with $0$ and $1$. In this way, the equilibria $0$ and $1$ are uniformly asymptotically stable for the equation \eqref{equationp}.
\section{Uniqueness and monotonicity of pulsating front}
This section is devoted to the proof of Theorem \ref{existenceunicite}.
\label{caracterisation}
\subsection{Two comparison principles}
\begin{lemma}
\label{TH1comparaison}
Let us fix $c \in \R$, $R_+\in \R$ and $\alpha \in (0,1)$.
We consider two functions $\g$ and $\G$ of class $\mathcal{C}^1(\R \times [0,1],\R)$, T-periodic and such that  
\begin{equation}
\label{compf}
\g(t,u)\leq \G(t,u),~~~\forall t\in \R,~~\forall u \in [0,1].
\end{equation}
We assume $\G$ satisfies  the hypothesis \eqref{01} and the first inequality of \eqref{bismoyT}.
\vspace{0.5em}\\
Suppose there exist two functions $\V:\R \times [R_+,+\infty) \to [0,1],~(t,\xi) \mapsto \V(t,\xi)$ and $\v:\R \times [R_+,+\infty) \to [0,1],~(t,\xi) \mapsto \v(t,\xi)$ of class $\mathcal{C}^{1,\frac{\alpha}{2}}(\R)$ in $t$ uniformly for $\xi \in [R_+,+\infty)$ and of class $\mathcal{C}^{2,\alpha}([R_+,+\infty))$ in $\xi$ uniformly for $t \in \R,$ and such that 
\begin{equation}
\label{usursol}
\partial_t \V-c\partial_\xi \V-\partial_{\xi \xi} \V \geq \G(t,\V)~~\text{on}~\R \times [R_+,+\infty),
\end{equation}
\vspace{-1.5em}
\begin{equation}
\label{usoussol}
\partial_t \underline{v}-c\partial_\xi \underline{v}-\partial_{\xi \xi} \underline{v} \leq \underline{g}(t,\underline{v})~~\text{on}~\R \times [R_+,+\infty),
\end{equation}
\begin{equation}
\label{comp21}
\v(t,R_+) \leq \V(t,R_+),~\forall t \in \R,
\end{equation}
\begin{equation}
\label{limunifu}
\v(\cdot,+\infty)=0~~\text{uniformly on}~\R.
\end{equation}
There exists $\delta_+ \in (0,1)$ depending only on $\overline{g}$ such that if we have 
\begin{equation}
\label{comp23}
\v(t,\xi)\leq \delta_+, ~~ \forall t \in \R,~~\forall \xi \in [R_+,+\infty),
\end{equation}
then
\begin{equation*}
\label{thcomparaison}
\v(t,x)\leq \V(t,x),~~~~\forall t\in \R,~~\forall \xi \in [R_+,+\infty).
\end{equation*}
\end{lemma}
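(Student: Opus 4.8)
The plan is to run a parabolic maximum principle for the difference $w:=\v-\V$ on the half--cylinder $\R\times[R_+,+\infty)$. Two features of the setting block a direct application: the domain carries no initial time slice, and the linearization of $\G$ may take either sign. Both are dealt with by conjugating with a positive $T$--periodic weight built from $\G$. First I would fix the threshold: since $\G\in\mathcal{C}^1(\R\times[0,1])$ is $T$--periodic, $\G_u$ is uniformly continuous on $[0,T]\times[0,1]$; combined with the first inequality of \eqref{bismoyT}, $\frac1T\int_0^T\G_u(s,0)\,ds<0$, this yields $\delta_+\in(0,1)$, depending only on $\G$, such that the continuous $T$--periodic function $\psi(t):=\max_{u\in[0,\delta_+]}\G_u(t,u)$ still has strictly negative mean $\overline{\psi}:=\frac1T\int_0^T\psi(s)\,ds<0$. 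I would then set $\Phi(t):=\exp\!\big(\int_0^t(\psi(s)-\overline{\psi})\,ds\big)$, which, because $\psi-\overline{\psi}$ has zero mean, is positive, continuous and $T$--periodic, hence bounded between two positive constants; it is, up to normalization, the principal eigenfunction attached to the coefficient $\psi$, in the spirit of Proposition~\ref{enplus}.

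Next I would derive the key inequality. Subtracting \eqref{usursol} from \eqref{usoussol} and using \eqref{compf},
\[
\partial_t w-c\,\partial_\xi w-\partial_{\xi\xi}w\;\le\;\g(t,\v)-\G(t,\V)\;\le\;\G(t,\v)-\G(t,\V)\;=\;b(t,\xi)\,w
\]
on $\R\times[R_+,+\infty)$, where $b(t,\xi)=\G_u\big(t,\zeta(t,\xi)\big)$ for some $\zeta(t,\xi)$ between $\v(t,\xi)$ and $\V(t,\xi)$ by the mean value theorem; $b$ is bounded, and on the set $\{w\ge0\}$ one has $0\le\V\le\v\le\delta_+$ by \eqref{comp23}, so there $\zeta\in[0,\delta_+]$ and $b(t,\xi)\le\psi(t)$. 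Writing $V:=w/\Phi$ — bounded, with the same sign as $w$ — the identity $\Phi'=(\psi-\overline{\psi})\Phi$ converts the above into $\partial_t V-c\,\partial_\xi V-\partial_{\xi\xi}V\le\big(b(t,\xi)-\psi(t)+\overline{\psi}\big)V$ on $\R\times[R_+,+\infty)$, whence, using $b\le\psi$ and $V\ge0$ on $\{V\ge0\}$ and $\overline{\psi}=-|\overline{\psi}|$,
\[
\partial_t V-c\,\partial_\xi V-\partial_{\xi\xi}V+|\overline{\psi}|\,V\;\le\;0\qquad\text{on }\{V>0\}.
\]
Moreover $V\le0$ on $\{\xi=R_+\}$ by \eqref{comp21}, and $\limsup_{\xi\to+\infty}V(t,\xi)\le0$ uniformly in $t$, since on $\{w\ge0\}$ one has $0\le w\le\v$ and \eqref{limunifu} holds.

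Finally I would conclude by contradiction: assume $V(t^\ast,\xi^\ast)>0$ at some point, and put $K:=\sup V\in(0,\infty)$. For $t_0<t^\ast$, compare $V$ with $\Sigma(t):=Ke^{-|\overline{\psi}|(t-t_0)}$ on $[t_0,+\infty)\times[R_+,+\infty)$: one has $V\le K=\Sigma$ on $\{t=t_0\}$, $V\le0<\Sigma$ on $\{\xi=R_+\}$, $V<\Sigma$ for $\xi$ large uniformly on bounded time intervals, while $\partial_t\Sigma-c\partial_\xi\Sigma-\partial_{\xi\xi}\Sigma+|\overline{\psi}|\Sigma=0$. Since $\{V-\Sigma>0\}\subset\{V>0\}$, a standard parabolic comparison on each strip $[t_0,t_1]\times[R_+,+\infty)$ — the maximum of $V-\Sigma$ cannot be attained with a positive value at an interior point, nor on the top $\{t=t_1\}$ — gives $V\le\Sigma$ there, hence $V(t^\ast,\xi^\ast)\le Ke^{-|\overline{\psi}|(t^\ast-t_0)}$; letting $t_0\to-\infty$ forces $V(t^\ast,\xi^\ast)\le0$, a contradiction. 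Therefore $V\le0$, i.e. $\v\le\V$ on $\R\times[R_+,+\infty)$.

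The substantive step, and the one I expect to be the main obstacle, is the passage to $\psi$, $\overline{\psi}$ and $\Phi$: this is where the on--average negativity \eqref{bismoyT} (rather than a pointwise sign of $\G_u$ near $0$) is turned into an honest positive zeroth--order coefficient $|\overline{\psi}|$ on $\{V>0\}$. Once that is in place, the absence of an initial time is cheap — the exponential supersolution together with $t_0\to-\infty$ — and the hypotheses $\g\le\G$ and $\v\le\delta_+$ are precisely what make $b$ controllable by $\psi$ on $\{w\ge0\}$.
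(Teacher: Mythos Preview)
Your argument is correct, and it takes a genuinely different route from the paper's proof. The paper uses a \emph{sliding method}: it sets
\[
\e^*=\inf\Big\{\e\ge 0~\Big|~\frac{\V-\v}{\Phi_{0,\G}}+\e\ge 0\ \text{on}\ \R\times[R_+,+\infty)\Big\},
\]
assumes $\e^*>0$, and then uses the $T$--periodicity of $\G,\g$ together with the Schauder estimates to extract, from a near-touching sequence $(t_n,\xi_n)$, convergent time-translates $\v^*,\V^*$. The \emph{strong} maximum principle applied to $z=(\V^*-\v^*)/\Phi_{0,\G}+\e^*$ on the open set $\{\V^*<\v^*\}$ then propagates the contact point sideways until it hits $\xi=R_+$, forcing $\e^*=0$. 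The weight used is the principal eigenfunction $\Phi_{0,\G}$ of $\G_u(\cdot,0)$, and $\delta_+$ is chosen so that $|\G_u(t,u)-\G_u(t,0)|<\lambda_{0,\G}/2$ on $[0,\delta_+]$.

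Your approach replaces all of the compactness/strong-maximum-principle machinery by a single decaying barrier. Conjugating by the periodic weight $\Phi$ built from $\psi(t)=\max_{u\in[0,\delta_+]}\G_u(t,u)$ (rather than from $\G_u(\cdot,0)$) produces a genuine positive zeroth-order coefficient $|\overline{\psi}|$ on $\{V>0\}$, so an exponential supersolution $\Sigma(t)=K e^{-|\overline{\psi}|(t-t_0)}$ and $t_0\to-\infty$ suffice. This is more elementary---only the weak (pointwise) maximum principle at a positive interior/top maximum is used, and no limits of translated solutions are needed. The paper's sliding method, on the other hand, is the template reused verbatim in the monotonicity and uniqueness proofs (Proposition~\ref{MONOT}, Proposition~\ref{PPRROO}, Lemma~\ref{lemstab38}), so there it has the advantage of uniformity of technique. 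Your choice of $\delta_+$ is compatible with the paper's: the paper's condition implies $\overline{\psi}\le -\lambda_{0,\G}/2<0$, so both depend only on $\G$.
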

\noindent
\begin{proof}
Let us first introduce a few notations. We denote $\lambda_{0,\G}$ and $\Phi_{0,\G}$ the principal eigenvalue and the principal eigenfunction associated with the function $\G$ and the equilibrium $0$. We saw in Section \ref{characterization} that $\Phi_{0,\G}$ depends only on $t$ (and not on $\xi$). Consequently, we have 
\begin{equation}
\label{vp0}
\Phi_{0,\G}'(t)=(\lambda_{0,\G}+\G_u(t,0))\Phi_{0,\G}(t),~~\forall t \in \R, 
\end{equation}
We also saw in Section \ref{characterization} that $\lambda_{0,\G}>0$. Since $\G$ is of class $\mathcal{C}^1(\R \times [0,1],\R)$ and periodic in $t$, there exists $\delta_+>0$ such that 
\begin{equation}
\label{regf}
\forall t \in \R,~~\forall (u,u') \in [0,1],~~|u-u'|\leq \delta_+ \Rightarrow |\G_u(t,u)-\G_u(t,u')|<\frac{\lambda_{0,\G}}{2}.
\end{equation}
The general strategy to prove Lemma \ref{TH1comparaison} consists in using a sliding method. To do so, we define 
$$\e^*=\inf \big\{ \e\geq 0~|~\frac{\V-\v}{\Phi_{0,\G}}+\e\geq0~\text{on}~\R\times[R_+,+\infty) \big\}.$$ 
We note that $\e^*$ is a real number since $\v$ and $\V$ are bounded and $\textstyle{\min_{\R}{\Phi_{0,\G}}>0}$. Furthermore, by continuity 
\begin{equation}
\label{demcomp1}
\frac{\V-\v}{\Phi_{0,\G}}+\e^*\geq0~\text{on}~\R\times[R_+,+\infty).\\
\end{equation}
We are going to show bwoc that $\e^*=0$. Thus let us suppose that $\e^*>0$. We consider a sequence $(\e_n)_n$ satisfying $\e_n \xrightarrow{n\to +\infty} \e^*$ and $0<\e_n<\e^*$. There exists $(t_n,\xi_n) \in \R \times[R_+,+\infty)$ such that 
\begin{equation}
\label{demcomp2}
\frac{\V(t_n,\xi_n)-\v(t_n,\xi_n)}{\Phi_{0,\G}(t_n)}+\e_n<0.\\
\end{equation}
We write $t_n=k_nT+t_n'$, with $k_n \in \Z$ and $|t_n'|<T$. Since $(t_n')_n$ is bounded, we thus have up to extraction of a subsequence that $t_n' \xrightarrow{n\to +\infty} t^*$. Furthermore, the sequence $(\xi_n)_n$ is also bounded. Indeed, let us suppose it is not the case. So, according to \eqref{limunifu}, there exists $R\in \R$ such that 
$$\forall n \in \N,~~\forall \xi \geq R,~~0\leq \v(t_n,\xi)\leq \frac{\e^*}{4}\min_{\R}\Phi_{0,\G}.$$
Now, if $(\xi_n)_n$ is not bounded, there exists $N \in \N$ such that $\xi_N\geq R$ and $\textstyle{\e_N>\frac{\e^*}{2}}$. So, 
$$\v(t_N,\xi_N)-\e_N\Phi_{0,\G}(t_N)\leq \v(t_N,\xi_N)-\frac{\e^*}{2}\min_{\R} \Phi_{0,\G}\leq \frac{\e^*}{4}\min_{\R} \Phi_{0,\G}-\frac{\e^*}{2}\min_{\R} \Phi_{0,\G}<0.$$
Now, by \eqref{demcomp2} we have $\V(t_N,\xi_N)\leq \v(t_N,\xi_N)-\e_N\Phi_{0,\G}(t_N)$. Hence $\V(t_N,\xi_N)<0$, which contradicts the fact that $\V \in [0,1]$. We thus have up to extraction of a subsequence that $\xi_n \xrightarrow{n\to +\infty} \xi^*\in [R_+,+\infty)$. We define $\v_n(t,\xi)=\v(t+k_nT,\xi),~\text{and}~\V_n(t,\xi)=\V(t+k_nT,\xi).$ As $\g$ and $\G$ are $T-$periodic, $\v_n$ and $\V_n$ satisfy respectively \eqref{usursol} and \eqref{usoussol}. Furthermore, $\v_n$ and $\V_n$ converge up to extraction of a subsequence respectively to $\v^*\in [0,1]$ and $\V^*\in [0,1]$ in $\mathcal{C}^{1,2}_\text{loc}(\R \times [R_+,+\infty))$. Passing to the limit in \eqref{demcomp1} we obtain
\begin{equation*}
\frac{\V^*-\v^*}{\Phi_{0,\G}}+\e^*\geq0~\text{on}~\R\times[R_+,+\infty).\\
\end{equation*}
According to \eqref{demcomp2}, we have 
$$\frac{\V_n(t_n',\xi_n)-\v_n(t_n',\xi_n)}{\Phi_{0,\G}(t_n')}+\e_n < 0.$$
So, passing to the limit, we have 
\begin{equation}
\label{demcomp3}
\frac{\V^*(t^*,\xi^*)-\v^*(t^*,\xi^*)}{\Phi_{0,\G}(t^*)}+\e^*= 0.\\
\end{equation}
We define the open set $$\Omega=\big\{(t,\xi)\in\R\times[R_+,+\infty)~|~0\leq \V^*(t,\xi)<\v^*(t,\xi) \big\}.$$
This set is open by continuity and by \eqref{comp21}. We note that $(t^*,\xi^*)\in \Omega$ because $\V^*(t^*,\xi^*)-\v^*(t^*,\xi^*)=-\e^*\Phi_{0,\G}(t^*)<0$. Furthermore, according to \eqref{comp21}, we have $\V^*(t^*,R_+)-\v^*(t^*,R_+) \geq 0$. So, $\xi^*>R_+$. We will apply a strong maximum principle to the nonnegative function 
$$z=\frac{\V^*-\v^*}{\Phi_{0,\G}}+\e^*.$$ 
There exists $\theta:\R\times [R_+,+\infty) \to [0,1],~(t,\xi) \mapsto \theta(t,\xi)$, with $\theta(t,\xi)$ between $\V^*(t,\xi)$ and $\v^*(t,\xi)$ such that we have on $\R\times [R_+,+\infty)$
\begin{flalign*}
\partial_tz-c\partial_\xi z-\partial_{\xi\xi}z  & \geq \frac{\G(t,\V^*)-\underline{g}(t,\v^*)}{\Phi_{0,\G}} - \frac{\partial_t\Phi_{0,\G}(\V^*-\v^*)}{\Phi_{0,\G}^2} ~~~(\eqref{usursol} ~\text{and}~ \eqref{usoussol})~&&\\
& \geq \frac{\G(t,\V^*)-\G(t,\v^*)}{\Phi_{0,\G}}-\frac{(\G_u(t,0)+\lambda_{0,\G})(\V^*-\v^*)}{\Phi_{0,\G}}~~~\text{(\eqref{vp0} and \eqref{compf})}~&&\\
&= \frac{1}{\Phi_{0,\G}}(\G_u(t,\theta)-\G_u(t,0)-\lambda_{0,\G}) (\V^*-\v^*),&&\\
&= ( \G_u(t,\theta)-\G_u(t,0)-\lambda_{0,\G}(z-\e^*) \G_u(t,\theta)-\G_u(t,0)-\lambda_{0,\G}).
\end{flalign*}
We define the bounded function $\alpha(t,\xi):=\G_u(t,\theta(t,\xi))-\G_u(t,0)-\lambda_{0,\G}$ on $\R \times [R_+,+\infty)$. According to \eqref{comp23} we have $\v^* \leq \delta_+$ on $\R \times [R_+,+\infty)$. Furthermore, we have $0\leq\V^* < \v^*$ on $\Omega$, whence $|\theta|\leq\delta_+$ on $\Omega$. So, by \eqref{regf} we have on $\Omega$
\begin{equation*}
\partial_tz-c\partial_\xi z-\partial_{\xi\xi}z -\alpha(t,\xi)z  \geq  -\e^*(\G_u(t,\theta)-\G_u(t,0)-\lambda_{0,\G}) \geq \frac{\lambda_{0,\G}\e^*}{2}>0.
\end{equation*}
Furthermore, according to \eqref{demcomp1}, we have $z\geq 0$ on $\Omega$ and even on $\R \times [R^+,+\infty)$. As $z(t^*,\xi^*)=0$ and $(t^*,\xi^*)\in \Omega$, it follows from \cite[Th2 p168]{PW} that
\begin{equation}
\label{protter}
z \equiv 0~\text{on}~\Omega_0,
\end{equation}
where $\Omega_0$ is the set of point $(t,\xi)\in \Omega$ such that there exists a continuous path $\gamma:[0,1] \to \Omega$ such that $\gamma(0)=(t,\xi)$ and $\gamma(1)=(t^*,\xi^*)$, and the time component of $\gamma(s)$ is nondecreasing with respect to $s\in [0,1]$.
Now, we define $$\underline{\xi}=\inf\big\{\xi\geq R_+~|~\forall \tilde{\xi} \in (\xi,\xi^*],~~(t^*,\tilde{\xi}) \in \Omega_0\big\}.$$
The equality \eqref{protter} and the continuity of $z$ imply that 
$$z(t^*,\underline{\xi})=0.$$ 
So, according to \eqref{comp21}, we have $\underline{\xi}>R_+$ (because $\V^*(t^*,R_+)\geq \v^*(t^*,R_+)$). Consequently, the definition of $\underline{\xi}$ and the continuity of $\V^*$ and $\v^*$ imply that 
$$\V^*(t^*,\underline{\xi})=\v^*(t^*,\underline{\xi}).$$
Finally, the previous two displayed equalities imply that $\e^*=0$, which contradicts the fact that $\e^*$ is positive. Consequently $\e^*=0$ and the lemma is proved.
\end{proof}
\noindent
With a similar proof, we can obtain the following lemma.
\begin{lemma}
\label{TH2comparaison}
Let us fix $c\in \R$, $R_- \in \R$ and $\alpha \in (0,1)$. We consider two functions $\g$ and $\G$ of class $\mathcal{C}^1(\R \times [0,1],\R)$, T-periodic and such that  
\begin{equation*}
\label{compf2}
\g(t,u)\leq \G(t,u),~~~~\forall t\in [0,T],~~\forall u \in [0,1].
\end{equation*}
\vspace{-0.5em}\\
We assume $\g$ satisfies  the hypothesis  \eqref{01} and the second inequality of \eqref{bismoyT}.
\vspace{0.5em}\\
Suppose there exist two functions $\V:\R \times (-\infty,R_-] \to [0,1],~(t,\xi) \mapsto \V(t,\xi)$ and $\v:\R \times (-\infty,R_-] \to [0,1],~(t,\xi) \mapsto \v(t,\xi)$ of class $\mathcal{C}^{1,\frac{\alpha}{2}}(\R)$ in $t$ uniformly for $\xi \in (-\infty,R_-]$ and of class $\mathcal{C}^{2,\alpha}((-\infty,R_-])$ in $\xi$ uniformly for $t \in \R,$ and such that 
\begin{equation*}
\label{usursol2}
\partial_t \V-c\partial_\xi \V-\partial_{\xi \xi} \V \geq \G(t,\V)~~\text{on}~\R \times (-\infty,R_-],
\end{equation*}
\begin{equation*}
\label{usoussol2}
\partial_t \underline{v}-c\partial_\xi \underline{v}-\partial_{\xi \xi} \underline{v} \leq \underline{g}(t,\underline{v})~~\text{on}~\R \times (-\infty,R_-],
\end{equation*}
\begin{equation*}
\v(t,R_-) \leq \V(t,R_-),~~\forall t \in \R,
\end{equation*}
\begin{equation*}
\label{limunifu2}
\V(t,-\infty)=1~~\text{uniformly on}~\R.
\end{equation*}
There exists $\delta_- \in (0,1)$ depending only on $\g$ such that, if we have 
\begin{equation*}
\V(t,\xi)\geq 1-\delta_-, ~~\forall t \in \R,~~\forall \xi \in (-\infty,R_-], 
\end{equation*}
then
\begin{equation*}
\label{thcomparaison2}
\v(t,x)\leq \V(t,x)~~\forall t\in \R,~~\forall \xi \in (-\infty,R_-].
\end{equation*}
\end{lemma}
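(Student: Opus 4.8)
The statement is the mirror image of Lemma \ref{TH1comparaison}, obtained by the change of variables $\xi \mapsto -\xi$ and $u \mapsto 1-u$ (together with $c \mapsto -c$), which turns a half-line to the right into a half-line to the left and swaps the roles of the equilibria $0$ and $1$. So the plan is to run the same sliding argument as in the proof of Lemma \ref{TH1comparaison}, but now anchored at the equilibrium $1$ instead of $0$. First I would introduce $\lambda_{1,\g}>0$ and the ($\xi$-independent) principal eigenfunction $\Phi_{1,\g}$ associated with $\g$ and the equilibrium $1$, so that $\Phi_{1,\g}'(t)=(\lambda_{1,\g}+\g_u(t,1))\Phi_{1,\g}(t)$; positivity of $\lambda_{1,\g}$ comes from the second inequality of \eqref{bismoyT} together with Propositions \ref{enplus} and \ref{equiv}. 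Then I would choose $\delta_-\in(0,1)$ by the uniform continuity of $\g_u$: for $|u-u'|\le\delta_-$ we have $|\g_u(t,u)-\g_u(t,u')|<\lambda_{1,\g}/2$ for all $t$.

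Next I would set
$$\e^*=\inf\Big\{\e\ge0~\Big|~\frac{\V-\v}{\Phi_{1,\g}}+\e\ge0~\text{on}~\R\times(-\infty,R_-]\Big\},$$
which is finite since $\v,\V$ are bounded and $\min_\R\Phi_{1,\g}>0$, and argue by contradiction that $\e^*>0$. Picking $\e_n\uparrow\e^*$ with $0<\e_n<\e^*$ and points $(t_n,\xi_n)$ where $(\V-\v)/\Phi_{1,\g}+\e_n<0$, I would first show $(\xi_n)_n$ is bounded: if not, then for $\xi$ very negative $\V(t_n,\xi)$ is close to $1$ uniformly (by the hypothesis $\V(t,-\infty)=1$), and the same estimate as in Lemma \ref{TH1comparaison} — now $\V(t_N,\xi_N)-\e_N\Phi_{1,\g}(t_N)$ exceeds $1$ — contradicts $\v\le1$. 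Writing $t_n=k_nT+t_n'$, passing to subsequential limits $t_n'\to t^*$, $\xi_n\to\xi^*\in(-\infty,R_-]$, and using parabolic estimates with $T$-periodicity of $\g,\G$, I obtain limit sub/supersolutions $\v^*,\V^*$ with $(\V^*-\v^*)/\Phi_{1,\g}+\e^*\ge0$ everywhere and $=0$ at $(t^*,\xi^*)$.

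On the open set $\Omega=\{0\le\V^*<\v^*\}$ I would apply the strong maximum principle (Protter–Weinberger, \cite[Th2 p168]{PW}) to $z=(\V^*-\v^*)/\Phi_{1,\g}+\e^*\ge0$: using \eqref{usursol2}, \eqref{usoussol2}, the eigenfunction equation for $\Phi_{1,\g}$, and $\g(t,\V^*)\le\G(t,\V^*)$ from \eqref{compf2}, one gets $\partial_t z-c\partial_\xi z-\partial_{\xi\xi}z-\alpha(t,\xi)z\ge \lambda_{1,\g}\e^*/2>0$ on $\Omega$, where the crucial point is that on $\Omega$ both $\V^*$ and $\v^*$ lie in $[1-\delta_-,1]$ (from the hypothesis $\V\ge1-\delta_-$ and $\V^*<\v^*\le1$), so the intermediate value $\theta$ in the mean value theorem satisfies $|\theta-1|\le\delta_-$ and the choice of $\delta_-$ applies. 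Since $z$ vanishes at the interior point $(t^*,\xi^*)$ (interior because $\V^*(t^*,R_-)\ge\v^*(t^*,R_-)$ forces $\xi^*<R_-$), $z\equiv0$ on the "parabolic" connected component $\Omega_0$, and then, defining $\overline{\xi}=\sup\{\xi\le R_-~|~\forall\tilde\xi\in[\xi^*,\xi),~(t^*,\tilde\xi)\in\Omega_0\}$, continuity forces $\V^*(t^*,\overline\xi)=\v^*(t^*,\overline\xi)$ with $\overline\xi<R_-$, contradicting the definition of $\Omega_0$ at the boundary. Hence $\e^*=0$, which gives $\v\le\V$. The only genuine subtlety, just as in Lemma \ref{TH1comparaison}, is keeping the argument localized to a neighbourhood of the equilibrium $1$ so that the sign-definite lower bound from $\lambda_{1,\g}>0$ survives; everything else is a routine transcription.
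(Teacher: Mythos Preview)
Your proposal is correct and is exactly the ``similar proof'' the paper alludes to: the same sliding argument as in Lemma~\ref{TH1comparaison}, but with the principal eigenpair $(\lambda_{1,\g},\Phi_{1,\g})$ at the equilibrium $1$ (coming from $\g$ and the second inequality of \eqref{bismoyT}) in place of $(\lambda_{0,\G},\Phi_{0,\G})$, and the comparison $\G(t,\V^*)-\g(t,\v^*)\ge \g(t,\V^*)-\g(t,\v^*)$ in place of $\G(t,\V^*)-\g(t,\v^*)\ge \G(t,\V^*)-\G(t,\v^*)$. Two small slips to fix when you write it out: in the boundedness step for $(\xi_n)$ the inequality reads $\v(t_N,\xi_N)>\V(t_N,\xi_N)+\e_N\Phi_{1,\g}(t_N)$ (plus, not minus), which then exceeds $1$ and contradicts $\v\le1$; and in the final step the contradiction is not with ``the definition of $\Omega_0$'' but with $\e^*>0$, since $z(t^*,\overline\xi)=0$ together with $\V^*(t^*,\overline\xi)=\v^*(t^*,\overline\xi)$ forces $\e^*=0$.
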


\subsection{Monotonicity of the front}
Let us consider $(U,c)$ a solution of \eqref{travel}, with $U:\R\times \R \to [0,1]$. 
We want to prove that for all $\tau \geq 0$, we have 
$$U(t,\xi) \leq U(t,\xi-\tau),~~~\forall t \in \R,~~\forall \xi \in \R.$$
We start with the following lemma.
\begin{lemma}
\label{AZ}
There exists $\tau_0 \geq 0$ such that for any $\tau \geq \tau_0$, we have 
$$U(t,\xi) \leq U(t,\xi-\tau),~~~\forall t \in \R,~~\forall \xi \in \R.$$
\end{lemma}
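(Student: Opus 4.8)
The plan is to establish Lemma \ref{AZ} by exploiting the uniform limits $U(\cdot,-\infty)=1$ and $U(\cdot,+\infty)=0$ together with the two comparison principles (Lemmas \ref{TH1comparaison} and \ref{TH2comparaison}) applied with $\underline g=\overline g=f^T$. First I would fix $\delta_+$ and $\delta_-$ from those lemmas (with this common choice of nonlinearity) and set $\delta=\min(\delta_+,\delta_-)$. By the uniform convergence of $U(t,\xi)$ to $0$ as $\xi\to+\infty$ and to $1$ as $\xi\to-\infty$, there exist reals $R_+$ and $R_-$ with $R_-<R_+$ such that $U(t,\xi)\le\delta$ for all $t$ and all $\xi\ge R_+$, and $U(t,\xi)\ge 1-\delta$ for all $t$ and all $\xi\le R_-$. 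Next, using the continuity and $T$-periodicity of $U$ in $t$ and the fact that $U$ is bounded away from $0$ and $1$ on the compact middle strip, there is a constant $\kappa>0$ such that $U(t,\xi)\ge 1-\delta$ for all $t$ and all $\xi\le R_-$... more to the point, I want that for $\tau$ large the shifted front $U(t,\xi-\tau)$ dominates $U(t,\xi)$ on the whole line.

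The key step is to choose $\tau_0$ so large that, for every $\tau\ge\tau_0$, the region where $U(t,\xi-\tau)$ could fail to be large (namely $\xi-\tau\ge R_-$, i.e. $\xi\ge R_-+\tau$) is pushed entirely inside the region where $U(t,\xi)$ is already small (namely $\xi\ge R_+$). Concretely, pick $\tau_0=R_+-R_-\ge 0$ (enlarging if necessary to make it nonnegative). Then for $\tau\ge\tau_0$ and $\xi\le R_-+\tau$ we have $\xi-\tau\le R_-$, hence $U(t,\xi-\tau)\ge 1-\delta\ge 1-\delta_-$; meanwhile for $\xi\ge R_-+\tau\ge R_+$ we have $U(t,\xi)\le\delta\le\delta_+$. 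This sets up a clean splitting of $\R$ into a left half-line $(-\infty,R_-+\tau]$ and a right half-line $[R_-+\tau,+\infty)$ on each of which one of the two comparison lemmas applies.

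On the right half-line $\xi\in[R_-+\tau,+\infty)$ I would apply Lemma \ref{TH1comparaison} with $R_+$ replaced by $R_-+\tau$, taking $\overline v(t,\xi)=U(t,\xi-\tau)$ and $\underline v(t,\xi)=U(t,\xi)$: both solve the equation (so the differential inequalities \eqref{usursol} and \eqref{usoussol} hold with equality), $\underline v(\cdot,+\infty)=0$ uniformly, $\underline v\le\delta\le\delta_+$ on that half-line, and the boundary inequality $\underline v(t,R_-+\tau)\le\overline v(t,R_-+\tau)$ holds because $R_-+\tau\ge R_+$ forces $\underline v(t,R_-+\tau)=U(t,R_-+\tau)\le\delta$ while $\overline v(t,R_-+\tau)=U(t,R_-)\ge 1-\delta>\delta$ (shrinking $\delta$ below $1/2$ at the outset). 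The conclusion gives $U(t,\xi)\le U(t,\xi-\tau)$ for all $t$ and all $\xi\ge R_-+\tau$. Symmetrically, on the left half-line $\xi\in(-\infty,R_-+\tau]$ I would apply Lemma \ref{TH2comparaison} with $R_-$ replaced by $R_-+\tau$ and the same $\overline v,\underline v$: here $\overline v(t,\xi)=U(t,\xi-\tau)\ge 1-\delta\ge 1-\delta_-$ on the half-line, $\overline v(\cdot,-\infty)=1$ uniformly, and the boundary inequality at $\xi=R_-+\tau$ is the same one just checked; the conclusion gives $U(t,\xi)\le U(t,\xi-\tau)$ for all $t$ and all $\xi\le R_-+\tau$. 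Combining the two half-lines yields the claimed inequality on all of $\R\times\R$.

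The main obstacle is the bookkeeping needed to verify all hypotheses of the two comparison lemmas simultaneously — in particular making a single choice of $\delta$ (below $1/2$, below $\delta_+$, and below $\delta_-$) and of $R_-,R_+$ that makes the boundary-matching inequality \eqref{comp21} hold at the shared point $\xi=R_-+\tau$ for every $\tau\ge\tau_0$, and confirming the required parabolic regularity of $\overline v$ and $\underline v$, which is immediate here since they are translates of the solution $U$ of \eqref{travel}. The geometric idea (slide the front so far that its "interface region" clears the other's, then compare on complementary half-lines) is standard, so once the constants are pinned down the argument is routine; no genuinely hard estimate is involved.
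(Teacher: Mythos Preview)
Your argument is correct and follows essentially the same route as the paper: shrink $\delta_\pm$ below $1/2$, use the uniform limits to identify regions where $U$ is close to $0$ or $1$, choose $\tau_0$ so the interfaces no longer overlap, and invoke Lemmas \ref{TH1comparaison} and \ref{TH2comparaison} on complementary half-lines with $\underline v=U$ and $\overline v=U(\cdot,\cdot-\tau)$. The only cosmetic difference is that you split at the $\tau$-dependent point $R_-+\tau$ whereas the paper splits at the fixed point $R_+$; both choices make the boundary inequality and the smallness/largeness conditions line up, so the proofs are interchangeable.
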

\begin{proof}
As $U(\cdot,+\infty)=0$ uniformly on $\R$, there exists a real $R_+$ such that 
\begin{equation}
\label{QS}
U(t,\xi) \leq \delta_+,  ~~~~\forall t \in \R,~~\forall \xi \in [R_+,+\infty),
\end{equation}
with $\delta_+ \in (0,1)$ defined in Lemma \ref{TH1comparaison} with $\G=\g=f$.\\
As $U(\cdot,-\infty)=1$ uniformly on $\R$, there exists $\tau_0\geq0$ such that 
$$U(t,\xi-\tau) \geq 1-\delta_-,~~~~\forall \tau \geq \tau_0,~~\forall t \in \R,~~\forall \xi \in (-\infty,R_+],$$
where $\delta_- \in (0,1)$ is defined in Lemma \ref{TH2comparaison} with $\G=\g=f$.\\
Without loss of generality, we can assume that $\max{\{ \delta_-,\delta_+\}}<1/2$, whence $U(t,R_+) \leq U(t,R_+-\tau)$ for all $t \in \R$ and $\tau \geq \tau_0$. We can apply Lemma \ref{TH1comparaison} on $\R \times [R_+,+\infty)$ and Lemma \ref{TH2comparaison} on $\R \times (-\infty,R_+]$ to the functions $\v=U$ and $\V=U(\cdot,\cdot-\tau)$ for any $\tau \geq \tau_0$, and the lemma is proved.
 \end{proof}
\begin{proposition}
\label{MONOT}
We have 
$$\partial_\xi U(t,\xi)<0,~~~~\forall t\in \R,~~\forall \xi \in \R.$$
\end{proposition}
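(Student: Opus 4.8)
The plan is to use a sliding argument on the shift parameter to upgrade the already-established monotonicity statement ``$U(t,\xi)\le U(t,\xi-\tau)$ for all $\tau\ge\tau_0$'' (Lemma \ref{AZ}) into the same inequality for \emph{all} $\tau\ge 0$, and then to extract the strict inequality $\partial_\xi U<0$ from the strong maximum principle. Concretely, define
$$\tau^*=\inf\big\{\tau_1\ge 0~|~U(t,\xi)\le U(t,\xi-\tau),~\forall t\in\R,~\forall\xi\in\R,~\forall\tau\ge\tau_1\big\}.$$
By Lemma \ref{AZ} this set is nonempty, so $\tau^*\in[0,\tau_0]$ is well defined, and by continuity $U(t,\xi)\le U(t,\xi-\tau^*)$ for all $(t,\xi)$. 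The goal is to show $\tau^*=0$, arguing by contradiction: suppose $\tau^*>0$.

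First I would localize where the two functions $U$ and $U(\cdot,\cdot-\tau^*)$ can touch. Using $U(\cdot,+\infty)=0$ and $U(\cdot,-\infty)=1$ uniformly, together with the thresholds $\delta_+$, $\delta_-$ of Lemmas \ref{TH1comparaison} and \ref{TH2comparaison} (applied with $\overline g=\underline g=f$), one picks $R_+$ large and $R_-$ negative so that $U\le\delta_+$ on $\R\times[R_+,+\infty)$ and $U(\cdot,\cdot-\tau)\ge 1-\delta_-$ on $\R\times(-\infty,R_-]$ for all $\tau$ near $\tau^*$ (shrinking the admissible shift window is harmless since $\tau^*>0$). For shifts $\tau$ slightly below $\tau^*$, the comparison Lemmas \ref{TH1comparaison} and \ref{TH2comparaison} apply on the two half-lines $[R_+,+\infty)$ and $(-\infty,R_-]$ provided the boundary ordering $U(t,R_\pm)\le U(t,R_\pm-\tau)$ holds; so any failure of the inequality $U\le U(\cdot,\cdot-\tau)$ must occur in the compact strip $\R\times[R_-,R_+]$ (modulo $T$-periodicity in $t$, which lets one confine the time variable to $[0,T]$). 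Hence there exist sequences $\tau_n\uparrow\tau^*$ and points $(t_n,\xi_n)$ in a compact set where $U(t_n,\xi_n)>U(t_n,\xi_n-\tau_n)$; passing to the limit (using $T$-periodicity to shift $t_n$ into $[0,T]$ and parabolic estimates) yields a point $(\bar t,\bar\xi)$ with $U(\bar t,\bar\xi)=U(\bar t,\bar\xi-\tau^*)$, i.e.\ the function $w:=U(\cdot,\cdot-\tau^*)-U\ge 0$ attains an interior zero.

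Next I would apply the strong maximum principle to $w$. Writing the equation satisfied by $w$ in the form $\partial_t w-c\partial_\xi w-\partial_{\xi\xi}w = b(t,\xi)w$ with $b$ bounded (by the mean value theorem, using $\mathcal C^1$ regularity of $f^T$ in $u$), the interior zero of the nonnegative supersolution forces $w\equiv 0$ on the connected ``past'' component through $(\bar t,\bar\xi)$, as in the application of \cite[Th2 p168]{PW} used in the proof of Lemma \ref{TH1comparaison}. A propagation-in-$\xi$ argument identical to the $\underline\xi$ step of that proof then shows $w$ vanishes on a whole horizontal segment reaching toward $\xi\to+\infty$ (or toward $-\infty$), contradicting $U(\bar t,+\infty)=0\ne U(\bar t,+\infty-\tau^*)$... wait, those are both $0$; the contradiction instead comes from comparing the limits at the two ends: $w(\bar t,+\infty)=0$ but $w(\bar t,-\infty)=0$ too, so one must instead use that if $w\equiv 0$ on an open $t$-connected set then by unique continuation / periodicity $U$ is $\tau^*$-periodic in $\xi$, incompatible with $U(\cdot,-\infty)=1$, $U(\cdot,+\infty)=0$. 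This forces $\tau^*=0$, so $U(t,\xi)\le U(t,\xi-\tau)$ for every $\tau\ge 0$, i.e.\ $U(t,\cdot)$ is nonincreasing.

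Finally, to get the \emph{strict} inequality: fix $t$ and suppose $\partial_\xi U(t_0,\xi_0)=0$ at some point. The function $v:=\partial_\xi U$ (which exists and is $\mathcal C^{1,2}$ by parabolic regularity) satisfies the linearized equation $\partial_t v-c\partial_\xi v-\partial_{\xi\xi}v = f^T_u(t,U)\,v$, and we have just shown $v\le 0$ everywhere. If $v$ has an interior zero, the strong maximum principle applied to $-v\ge 0$ gives $v\equiv 0$ on the past parabolic component, hence (using $T$-periodicity in $t$ and then letting the component fill out $\R^2$) $v\equiv 0$ on all of $\R^2$, so $U$ is independent of $\xi$; this contradicts $U(\cdot,-\infty)=1\ne 0=U(\cdot,+\infty)$. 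Therefore $\partial_\xi U<0$ everywhere, which is Proposition \ref{MONOT}.

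I expect the main obstacle to be the limiting/compactness step that pins the first touching point into a bounded region: one must carefully combine the uniform limits at $\xi=\pm\infty$, the $T$-periodicity in $t$ (to keep the time variable bounded), parabolic interior estimates (to pass to a limit in $\mathcal C^{1,2}_{\mathrm{loc}}$), and the two one-sided comparison principles, making sure the boundary-ordering hypotheses $\v(t,R_\pm)\le\V(t,R_\pm)$ of Lemmas \ref{TH1comparaison}–\ref{TH2comparaison} are genuinely available for shifts just below $\tau^*$. The strong-maximum-principle steps are then routine given the template already set in the proof of Lemma \ref{TH1comparaison}.
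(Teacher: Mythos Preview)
Your proposal is correct and follows essentially the same sliding argument as the paper: define $\tau^*$, assume $\tau^*>0$, locate a touching point for the shift $\tau^*$ inside a compact strip, and use the strong maximum principle to force $U(t,\cdot)$ to be $\tau^*$-periodic in $\xi$, contradicting the limits; then apply the strong maximum principle to $\partial_\xi U$ for the strict sign. The paper's execution differs only cosmetically: instead of extracting a touching point as a limit of failure points $(t_n,\xi_n)$ for $\tau_n\uparrow\tau^*$, it sets $\eta:=\min_{[0,T]\times[R_-,R_+]}\big(U(\cdot,\cdot-\tau^*)-U\big)$ and splits into the cases $\eta>0$ (uniform continuity lets one slide past $\tau^*$, contradiction) and $\eta=0$ (a touching point already sits in the compact strip); note also that since the equation for $w=U(\cdot,\cdot-\tau^*)-U$ holds on all of $\R^2$, the strong maximum principle gives $w\equiv 0$ on $(-\infty,\bar t\,]\times\R$ directly, so the $\underline\xi$-propagation device you borrow from Lemma~\ref{TH1comparaison} is unnecessary here.
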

\begin{proof}
We define $R_+$ as in the previous proof. As $U(\cdot,-\infty)=1$ uniformly on $\R$, there exists $R_- \leq R_+$ such that 
$$U(t,\xi) \geq 1-\delta_-,~~~~\forall t \in \R,~~\forall \xi \in (-\infty,R_-].$$
Consequently,
\begin{equation}
\label{QSD}
U(t,\xi-\tau) \geq 1-\delta_-,~~~~\forall \tau \geq 0,~~\forall t \in \R,~~\forall \xi \in (-\infty,R_-].
\end{equation}
We define 
$$\tau^*=\inf \big\{ \tilde{\tau} \geq 0~|~U(t,\xi) \leq U(t,\xi-\tau),~\forall \tau \geq \tilde{\tau},~~\forall (t,\xi) \in \R^2 \big\}.$$
The constant $\tau^*$ is a well defined real number according to Lemma \ref{AZ}. We will prove that $\tau^*=0$ by contradiction. Let us suppose that $\tau^*>0$. The definition of $\tau^*$ and the continuity of $U$ imply that 
\begin{equation}
\label{inegalite}
U(t,\xi) \leq U(t,\xi-\tau^*),~~~~\forall t \in \R,~~\forall \xi \in \R.
\end{equation}
We define $$\eta:=\min\limits_{(t,\xi)\in [0,T] \times [R_-,R_+]}\{U(t,\xi-\tau^*)-U(t,\xi)\}.$$\\
Two cases can occur, either $\eta>0$, or $\eta=0$.
\vspace{1em}\\
$\textit{1}^{st}~case:~\eta>0$. By uniform continuity of $U$ on $[0,T] \times [R_--\tau^*,R_+]$, there exists  $\overline{\tau}\in(0,\tau^*)$ such that 
\begin{equation}
\label{QSDF}
U(t,\xi) < U(t,\xi-\tau),~~~\forall \tau \in [\overline{\tau},\tau^*],~~\forall t\in[0,T],~~\forall \xi \in [R_-,R_+].
\end{equation}
Let $\tau \in [\overline{\tau},\tau^*]$. According to \eqref{QS} and \eqref{QSDF}, and since $U$ is $T$-periodic in $t$,  we can apply Lemma \ref{TH1comparaison} to  $U$ and $U(\cdot,\cdot-\tau)$ on $\R \times [R_+,+\infty)$. We obtain 
$$U(t,\xi) \leq U(t,\xi-\tau)~~~\forall t \in[0,T],~~\forall \xi \in [R_+,+\infty).$$
And according to \eqref{QSD} and \eqref{QSDF}, we can apply Lemma \ref{TH2comparaison} to $U$ and $U(\cdot,\cdot-\tau)$ on $\R \times (-\infty,R_-]$. We obtain 
$$U(t,\xi) \leq U(t,\xi-\tau),~~~\forall t \in[0,T],~~ \forall \xi \in (-\infty,R_-].$$
To summarize $U(t,\xi) \leq U(t,\xi-\tau)$ for all $t \in[0,T]$, and $\xi \in \R.$
Consequently, since $U$ is $T$-periodic in $t$, we have 
$$U(t,\xi) \leq U(t,\xi-\tau),~~~\forall \tau \in [\overline{\tau},\tau^*]~~\forall t \in \R,~~\forall \xi \in \R.$$
That is
$$U(t,\xi) \leq U(t,\xi-\tau),~~~\forall \tau \geq\overline{\tau},~~\forall t \in \R,~\forall \xi \in \R.$$ 
This contradicts the definition of $\tau^*$.
\vspace{1em}\\
$\textit{2}^{nd}~case:~\eta=0$. Let us begin by noting that there exists a couple $(t^*,\xi^*)\in [0,T] \times [R_-,R_+]$ such that $\eta=U(t^*,\xi^*-\tau^*)-U(t^*,\xi^*).$ Consequently, we have $U(t^*,\xi^*)=U(t^*,\xi^*-\tau^*).$ By applying the strong parabolic maximum principle on $\R^2$, we infer that
\begin{equation*}
U(t,\xi)=U(t,\xi-\tau^*)~~\forall t \in (-\infty,t^*],~~\forall \xi \in \R.
\end{equation*}
This implies that $U(t^*,\cdot)$ is a periodic function, which is impossible since $U(\cdot,-\infty)=1$ and $U(\cdot,+\infty)=0$.
\vspace{1em}\\
So we have $\tau^*=0$, that is 
$$U(t,\xi) \leq U(t,\xi-\tau),~~~~\forall \tau \geq 0,~~\forall t \in \R,~~\forall \xi \in \R.$$
In other terms $\partial_\xi U(t,\xi) \leq 0$ for all $t \in \R$ and $\xi \in \R.$ We apply the strong maximum principle to the equation satisfied by $\partial_\xi U$ and obtain 
$\partial_\xi U(t,\xi) < 0$ for all $(t,\xi) \in \R^2$ (otherwise $\partial_\xi U$ would be identically equal to zero, which is impossible since $U(\cdot,-\infty)=1$ and $U(\cdot,+\infty)=0$). 
\end{proof}
\subsection{Uniqueness of $(U,c)$}
\noindent
We consider $(U_1,c_1)$ and $(U_2,c_2)$ two solutions of problem \eqref{travel}, with $U_1,U_2: \R \times \R \to [0,1]$. Without loss of generality, we can assume $c_1\geq c_2$.
\begin{lemma}
\label{AZE}
There exists $\tau_0 \geq 0$ such that for any $\tau \geq \tau_0$, we have 
$$U_1(t,\xi) \leq U_2(t,\xi-\tau),~~~\forall t \in \R,~~\forall \xi \in \R.$$
\end{lemma}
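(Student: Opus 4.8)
The plan is to deduce the inequality from the two comparison principles, Lemmas~\ref{TH1comparaison} and~\ref{TH2comparaison}, applied with $\g=\G=f^T$ and $c=c_1$. The point that makes this possible, despite the fact that $U_1$ and $U_2$ solve equations with different speeds, is the monotonicity established in Proposition~\ref{MONOT}: since $\partial_\xi U_2<0$ everywhere and $c_1\geq c_2$, the shifted function $\V:=U_2(\cdot,\cdot-\tau)$ satisfies
\begin{equation*}
\partial_t\V-c_1\partial_\xi\V-\partial_{\xi\xi}\V=f^T(t,\V)+(c_2-c_1)\partial_\xi\V\geq f^T(t,\V)\quad\text{on }\R\times\R,
\end{equation*}
so $\V$ is a supersolution for the speed $c_1$, while $\v:=U_1$ is an exact solution, hence a subsolution, for the same speed. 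Standard parabolic regularity guarantees that $U_1$ and $U_2$ have the H\"older regularity required in the hypotheses of the two lemmas.

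First I would fix the constants. Let $\delta_+\in(0,1)$ be the constant given by Lemma~\ref{TH1comparaison} and $\delta_-\in(0,1)$ the one given by Lemma~\ref{TH2comparaison}, both with $\g=\G=f^T$; without loss of generality $\max\{\delta_-,\delta_+\}<1/2$. Since $U_1(\cdot,+\infty)=0$ uniformly on $\R$, there is $R_+\in\R$ with $U_1(t,\xi)\leq\delta_+$ for all $t\in\R$ and all $\xi\geq R_+$. Since $U_2(\cdot,-\infty)=1$ uniformly on $\R$, there is $\tau_0\geq0$ such that $U_2(t,\xi-\tau)\geq1-\delta_-$ for all $\tau\geq\tau_0$, all $t\in\R$ and all $\xi\leq R_+$. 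With $\max\{\delta_-,\delta_+\}<1/2$ this forces
\begin{equation*}
U_1(t,R_+)\leq\delta_+<\tfrac12<1-\delta_-\leq U_2(t,R_+-\tau),\qquad\forall t\in\R,\ \forall\tau\geq\tau_0,
\end{equation*}
which is the boundary condition \eqref{comp21} (resp.\ its analogue in Lemma~\ref{TH2comparaison}) at the point $\xi=R_+$.

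Then, fixing $\tau\geq\tau_0$, I would apply Lemma~\ref{TH1comparaison} on $\R\times[R_+,+\infty)$ to $\v=U_1$ and $\V=U_2(\cdot,\cdot-\tau)$: the supersolution and subsolution inequalities hold as noted above, $\v(\cdot,+\infty)=0$ uniformly, the boundary inequality at $R_+$ holds, and $\v\leq\delta_+$ on $\R\times[R_+,+\infty)$, so $U_1(t,\xi)\leq U_2(t,\xi-\tau)$ for all $t\in\R$ and $\xi\geq R_+$. Symmetrically, I would apply Lemma~\ref{TH2comparaison} on $\R\times(-\infty,R_+]$ to the same pair: the boundary inequality at $R_+$ holds, $\V(\cdot,-\infty)=U_2(\cdot,-\infty)=1$ uniformly, and $\V\geq1-\delta_-$ on $\R\times(-\infty,R_+]$ by the choice of $\tau_0$, so $U_1(t,\xi)\leq U_2(t,\xi-\tau)$ for all $t\in\R$ and $\xi\leq R_+$. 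Gluing the two half-lines gives $U_1(t,\xi)\leq U_2(t,\xi-\tau)$ on all of $\R^2$, which is the claim, and $R_+,\tau_0$ were chosen independently of $\tau$.

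The main obstacle is the conceptual one handled in the first paragraph: the comparison lemmas are stated for a single transport speed, so one must check that a spatial shift of the slower front $U_2$ serves as a supersolution for the equation of the faster front $U_1$; once the monotonicity of Proposition~\ref{MONOT} is invoked this is immediate, and the remainder is only a verification of the hypotheses of Lemmas~\ref{TH1comparaison} and~\ref{TH2comparaison}.
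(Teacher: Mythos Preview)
Your proof is correct and follows essentially the same approach as the paper: you use the monotonicity from Proposition~\ref{MONOT} together with $c_1\geq c_2$ to show that $U_2(\cdot,\cdot-\tau)$ is a supersolution for the equation with speed $c_1$, then choose $R_+$ and $\tau_0$ from the uniform limits of $U_1$ and $U_2$ and apply Lemmas~\ref{TH1comparaison} and~\ref{TH2comparaison} on the two half-lines $[R_+,+\infty)$ and $(-\infty,R_+]$. Your write-up is in fact slightly more explicit than the paper's in verifying the boundary inequality at $\xi=R_+$ via $\max\{\delta_-,\delta_+\}<1/2$.
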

\begin{proof}
The function $U_2$ is supersolution of problem \eqref{travel} with speed $c_1$ since $\partial_\xi U_2<0$ (Prop. \ref{MONOT}) and $c_1\geq c_2$. Indeed 
$$\partial_t U_2 - c_1 \partial_\xi U_2 - \partial_{\xi\xi}U_2 = f^T(t,U_2)+(c_2-c_1)\partial_\xi U_2\geq f^T(t,U_2)~~~~\text{on}~\R^2.$$
 As $U_1(\cdot,+\infty)=0$ uniformly on $\R$, there exists a real number $R_+$ such that 
\begin{equation}
\label{WX}
U_1(t,\xi) \leq \delta_+,~~~~ \forall t \in \R,~\forall \xi \in [R_+,+\infty),
\end{equation}
where $\delta_+ \in (0,1)$ is given by Lemma \ref{TH1comparaison} with $\G=\g=f$ and only depends on $f$..\\
As $U_2(\cdot,-\infty)=1$ uniformly on $\R$, there exists $\tau_0\geq0$ such that for any $\tau \geq \tau_0$, we have 
$$U_2(t,\xi-\tau) \geq 1-\delta_-, ~~~~\forall t \in \R,~~\forall \xi \in (-\infty,R_+],$$
where $\delta_- \in (0,1)$ is given by Lemma \ref{TH2comparaison} with $\G=\g=f$ and only depends on $f$.\\
We can apply Lemma \ref{TH1comparaison} on $\R \times [R_+,+\infty)$ and Lemma \ref{TH2comparaison} on $\R \times (-\infty,R_+]$ to the functions $\v=U_1$ and $\V=U_2(\cdot,\cdot-\tau)$ for any $\tau \geq \tau_0$ to obtain the lemma.
\end{proof}
\begin{proposition}
\label{PPRROO}
There exists $\tau^* \in \R$ such that 
$$U_1(t,\xi) = U_2(t,\xi-\tau^*),~~~~\forall t\in \R,~~\forall \xi\in \R.$$
And thus $c_1=c_2.$
\end{proposition}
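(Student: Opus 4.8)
The plan is to run a sliding argument down to a critical shift and use the strong maximum principle to force contact everywhere, exactly in the spirit of the proof of Proposition~\ref{MONOT}. Define
$$\tau^*=\inf\big\{\tilde\tau\in\R~|~U_1(t,\xi)\leq U_2(t,\xi-\tau),~\forall\tau\geq\tilde\tau,~\forall(t,\xi)\in\R^2\big\},$$
which is a well-defined real number: it is bounded above by the $\tau_0$ of Lemma~\ref{AZE}, and it is bounded below because $U_1(\cdot,-\infty)=1$ while $U_2(\cdot,\xi-\tau)\to 0$ uniformly as $\tau\to-\infty$ for fixed $\xi$, so very negative shifts cannot satisfy the ordering. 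By continuity and $T$-periodicity in $t$ we get $U_1(t,\xi)\leq U_2(t,\xi-\tau^*)$ on $\R^2$.

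The core step is to show this inequality is in fact an equality. Recall $U_2(\cdot,\cdot-\tau^*)$ is a supersolution of \eqref{travel} with speed $c_1$ (since $\partial_\xi U_2<0$ and $c_1\geq c_2$, as in Lemma~\ref{AZE}), while $U_1$ solves it with speed $c_1$. Fix $R_+$ with $U_1(t,\xi)\leq\delta_+$ for $\xi\geq R_+$ (all $t$), and $R_-\leq R_+$ with $U_2(t,\xi-\tau^*)\geq 1-\delta_-$ for $\xi\leq R_-$ (all $t$, all further shifts), using the uniform limits and shrinking so that $\max\{\delta_-,\delta_+\}<1/2$. Set $\eta:=\min_{[0,T]\times[R_-,R_+]}\{U_2(t,\xi-\tau^*)-U_1(t,\xi)\}\geq 0$. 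If $\eta>0$, uniform continuity gives $\overline\tau<\tau^*$ with $U_1(t,\xi)<U_2(t,\xi-\tau)$ on $[0,T]\times[R_-,R_+]$ for all $\tau\in[\overline\tau,\tau^*]$; then Lemma~\ref{TH1comparaison} on $\R\times[R_+,+\infty)$ and Lemma~\ref{TH2comparaison} on $\R\times(-\infty,R_-]$ (with $\v=U_1$, $\V=U_2(\cdot,\cdot-\tau)$, $\g=\G=f$) extend the ordering to all $\xi$, and by $T$-periodicity to all $t$, yielding $U_1(t,\xi)\leq U_2(t,\xi-\tau)$ for all $\tau\geq\overline\tau$, contradicting minimality of $\tau^*$. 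Hence $\eta=0$: there is $(t^*,\xi^*)\in[0,T]\times[R_-,R_+]$ with $U_1(t^*,\xi^*)=U_2(t^*,\xi^*-\tau^*)$. Applying the strong parabolic maximum principle to the nonnegative function $w:=U_2(\cdot,\cdot-\tau^*)-U_1$, which satisfies a linear parabolic inequality $w_t-c_1w_\xi-w_{\xi\xi}\geq a(t,\xi)w$ with $a$ bounded (write $f^T(t,U_2(\cdot-\tau^*))-f^T(t,U_1)$ via the mean value theorem and absorb the favorable term $(c_2-c_1)\partial_\xi U_2\geq 0$), we conclude $w\equiv 0$ on $(-\infty,t^*]\times\R$, hence by $T$-periodicity $w\equiv 0$ on $\R^2$. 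Thus $U_1(t,\xi)=U_2(t,\xi-\tau^*)$ everywhere.

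Finally, plugging $U_1(t,\xi)=U_2(t,\xi-\tau^*)$ into the two equations of \eqref{travel} and subtracting gives $(c_1-c_2)\partial_\xi U_2(t,\xi-\tau^*)=0$ on $\R^2$; since $\partial_\xi U_2<0$ by Proposition~\ref{MONOT}, this forces $c_1=c_2$.

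\textbf{Main obstacle.} The delicate point is the same as in Proposition~\ref{MONOT}: guaranteeing that the two comparison lemmas genuinely apply on the outer regions, i.e.\ that along the sliding interval $[\overline\tau,\tau^*]$ the profiles still lie in the regimes $U_1\leq\delta_+$ near $+\infty$ and $U_2(\cdot-\tau)\geq 1-\delta_-$ near $-\infty$, with the boundary inequality at $R_\pm$ intact. This requires choosing $R_\pm$ before fixing $\overline\tau$ and exploiting monotonicity of $U_2$ in $\xi$ so that increasing the shift only helps; the argument that $\tau^*$ is bounded below (so the infimum is a real number, not $-\infty$) also needs a short separate verification rather than being a formality.
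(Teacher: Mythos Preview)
Your proof is correct and follows essentially the same sliding argument as the paper: define the critical shift $\tau^*$, use the two comparison lemmas on the outer regions together with compactness on the middle strip $[0,T]\times[R_-,R_+]$ to rule out $\eta>0$, then apply the strong maximum principle at the contact point to force $U_1\equiv U_2(\cdot,\cdot-\tau^*)$ and deduce $c_1=c_2$ by subtracting the equations. The only cosmetic difference is that the paper defines $\tau^*$ directly as $\inf\{\tau\in\R\mid U_1\leq U_2(\cdot,\cdot-\tau)\text{ on }\R^2\}$ rather than via ``$\forall\tau\geq\tilde\tau$'', but by the strict monotonicity of $U_2$ in $\xi$ these give the same number; your identification of the main obstacle (choosing $R_\pm$ before $\overline\tau$, and checking $\tau^*>-\infty$) matches the care the paper takes.
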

\begin{proof}
We define 
$$\tau^*=\inf \big\{ \tau \in \R~|~U_1(t,\xi) \leq U_2(t,\xi-\tau)~\forall (t,\xi) \in  \R^2 \big\}.$$
The set inside infimum is not empty according to Lemma \ref{AZE}. Furthermore, it is bounded from below because as $U_2(0,+\infty) = 0$ and $U_1(0, 0) \in(0, 1)$, we can find a small enough $\tau_1$ so that for any $\tau \leq \tau_1$, we have $U_2(0, -\tau) < U_1(0, 0)$.
Consequently, $\tau^*$ is a well defined real number. The definition of $\tau^*$ and the continuity of $U_1$ and $U_2$ imply that 
\begin{equation}
\label{Aaa}
U_1(t,\xi) \leq U_2(t,\xi-\tau^*),~~~~\forall t \in \R,~~\forall \xi \in \R.
\end{equation}
We define $R_+$ as in the proof of Lemma \ref{AZE}. As $U_2(\cdot,-\infty)=1$ uniformly on $\R$, there exists $R_- \leq R_+$ such that 
\begin{equation}
\label{WXC}
U_2(t,\xi-\tau) \geq 1-\delta_-,~~~~\forall \tau \geq \tau^*-1,~~\forall t \in \R,~~\forall \xi \in (-\infty,R_-].
\end{equation}
We define $$\eta:=\min\limits_{(t,\xi)\in [0,T] \times [R_-,R_+]}\big\{U_2(t,\xi-\tau^*)-U_1(t,\xi)\big\}.$$
Notice that $\eta \geq 0$ by \eqref{Aaa}. If $\eta>0$, by the uniform continuity of the function $U_2$ on $[0,T] \times [R_--\tau^*,R_+-\tau^*+1]$, there exists  $\overline{\tau}\in(\tau^*-1,\tau^*)$ such that 
\begin{equation}
\label{WXCV}
U_1(t,\xi) \leq U_2(t,\xi-\overline{\tau}),~~~\forall t \in[0,T], ~\forall \xi \in [R_-,R_+].
\end{equation}
According to \eqref{WX}, \eqref{WXCV} and the $T$-periodicity of $U_1$ and $U_2$ in $t$, we can apply Lemma \ref{TH1comparaison} to the functions $\v=U_1$ and $\V=U_2(\cdot,\cdot-\overline{\tau})$ on $\R \times [R_+,+\infty)$. We obtain 
$$U_1(t,\xi) \leq U_2(t,\xi-\overline{\tau}),~~~\forall t \in \R,~~ \forall \xi \in [R_+,+\infty).$$
In the same way,  according to \eqref{WXC} and \eqref{WXCV}, we can apply Lemma \ref{TH2comparaison} to the functions $\v=U_1$ and $\V=U_2(\cdot,\cdot-\tau^*)$ on $\R \times (-\infty,R_-]$. We obtain 
$$U_1(t,\xi) \leq U_2(t,\xi-\overline{\tau}),~~~\forall t \in \R,~~ \forall \xi \in (-\infty,R_-].$$
To summarize $U_1(t,\xi) \leq U_2(t,\xi-\overline{\tau})$ for all $t \in[0,T]$ and $\xi \in \R.$ This contradicts the definition of $\tau^*$. Consequently, we have $\eta=0$. So, there exists a couple $(t^*,\xi^*)\in [0,T] \times [R_-,R_+]$ such that 
$$U_1(t^*,\xi^*)=U_2(t^*,\xi^*-\tau^*).$$
By applying the strong maximum principle on $ \R^2$, we get that \\
\begin{equation*}
U_1(t,\xi)=U_2(t,\xi-\tau^*),~~\forall t \in (-\infty, t^*],~~\forall \xi \in \R.
\end{equation*}
By periodicity in $t$, the previous inequality is true on $ \R^2$.
\vspace{1em}\\
Now, if we substract the equation satisfied by $U_1$ to the equation satisfied by $U_2(\cdot,\cdot-\tau^*)$, we obtain that 
$$(c_2-c_1)\partial_\xi U_2(t,\xi-\tau^*)=0,~~~~\forall t \in \R,~~\forall \xi \in  \R.$$
Since $\partial_\xi U_2<0$, we get $c_1=c_2$ and the proof of Proposition \ref{PPRROO} is complete.
\end{proof}
\section{Asymptotic stability of pulsating waves}
This section is devoted to the proof of Theorem \ref{stability}. To simplify the notations, we note $\lambda_0$ (resp. $\lambda_1$)  the principal eigenvalue associated with the function $f^T$ and the equilibrium $0$ (resp. $1$). The condition \eqref{bismoyT} implies that $\lambda_0>0$ and $\lambda_1>0$. Furthermore, according to Section \ref{characterization}, there exists a unique positive $T$-periodic function $\Phi_0(t)$ such that 
\begin{equation*}
\begin{cases}
\Phi_0'(t)=\big(\lambda_0+f^T_u(t,0)\big)\Phi_0(t),~~\forall t \in \R, \\
\Phi_0(0)=1.
\end{cases}
\end{equation*}
Also, there exists a unique positive $T$-periodic function $\Phi_1(t)$ such that 
\begin{equation*}
\begin{cases}
\Phi_1'(t)=\big(\lambda_1+f^T_u(t,1)\big)\Phi_1(t),~~\forall t \in \R, \\
\Phi_1(0)=1.
\end{cases}
\end{equation*}
Assume there exists a pulsating front $U$ with speed $c$ solving \eqref{travel}. We consider a solution $u$ of the Cauchy problem 
$$\begin{cases}
u_t-u_{xx}=f^T(t,u)~~\text{on}~(0,+\infty)\times\R, \\
u(0,x)=h(x),~~\forall x \in \R,
\end{cases}$$
where the initial condition $h:\R \to [0,1]$  is uniformly continuous. We denote $v(t,\xi)=u(t,x)=u(t,\xi+ct)$. The function $v$ satisfies the Cauchy problem
\begin{equation}
\label{systv}
\begin{cases} \partial_tv-c\partial_\xi v-\partial_{\xi \xi}v=f^T(t,v)~~\text{on}~(0,+\infty) \times \R, \\
 v(0,\xi)=h(\xi),~~\forall \xi \in \R. 
 \end{cases}
 \end{equation}
Our basic lemma is the following.
\begin{lemma}
\label{lemstab18}
There exists a constant $\gamma\in (0,1)$ depending only on $f^T$ and $U$, such that if 
$$\liminf\limits_{\xi \to -\infty}h(\xi)>1-\gamma~~\text{and}~~\limsup\limits_{\xi \to +\infty}h(\xi)<\gamma,$$
then, there exist some real numbers $q_0>0$, $\underline{\xi}^0$ and $\overline{\xi}^0$, such that 
\begin{equation}
\label{soussur8}
 U(t,\xi+\underline{\xi}^0)-q_0(\|\Phi_0\|_\infty+\|\Phi_1\|_\infty)e^{-\mu t}  \leq v(t,\xi) \leq U(t,\xi+\overline{\xi}^0)+q_0(\|\Phi_0\|_\infty+\|\Phi_1\|_\infty)e^{-\mu t},
 \end{equation}
for any real number $\xi$ and for any $t$ positive.
\end{lemma}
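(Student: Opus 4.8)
The plan is to construct, for a suitable small $\gamma$, a supersolution of the form $w^+(t,\xi)=U(t,\xi+\xi^+(t))+q(t)\bigl(\|\Phi_0\|_\infty+\|\Phi_1\|_\infty\bigr)$ and a symmetric subsolution $w^-(t,\xi)=U(t,\xi+\xi^-(t))-q(t)\bigl(\|\Phi_0\|_\infty+\|\Phi_1\|_\infty\bigr)$, where $q(t)=q_0e^{-\mu t}$ with $\mu\in(0,\min(\lambda_0,\lambda_1))$ small and $\xi^\pm(t)$ are shifts that relax exponentially to finite limits. This is the Fife--McLeod technique adapted to the bistable-on-average setting. First I would fix $\gamma$ so small that, using the monotonicity $U_\xi<0$ from Proposition \ref{MONOT} and the asymptotics $U(\cdot,\pm\infty)=0,1$, the values of $U(t,\xi)$ lie in $[0,\delta]$ for $\xi$ large and in $[1-\delta,1]$ for $\xi$ very negative, where $\delta$ is chosen so that on $[0,\delta]$ one has $f^T_u(t,u)\le f^T_u(t,0)+\lambda_0/2-\mu\cdot(\text{something})$ (by continuity of $f^T_u$ and $\lambda_0>0$), and analogously near $1$ using $\lambda_1>0$; this is where hypotheses \eqref{01} and \eqref{bismoyT} enter decisively. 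In the intermediate zone $U$ is bounded away from $0$ and $1$, so $U_\xi$ is bounded above by a negative constant there, which lets a drift in $\xi^\pm$ absorb the defect.

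Next I would carry out the standard computation: plugging $w^+$ into $\partial_t-c\partial_\xi-\partial_{\xi\xi}-f^T(t,\cdot)$, one gets terms $\dot\xi^+(t)U_\xi$, $\dot q(t)(\|\Phi_0\|_\infty+\|\Phi_1\|_\infty)$, and the nonlinear defect $f^T(t,U)-f^T(t,w^+)=-f^T_u(t,\theta)\,q(t)(\cdots)$ for some intermediate $\theta$. In the two "tail" regions the sign of $f^T_u$ near the equilibria (controlled by $\lambda_0,\lambda_1$ via the eigenfunction relations $\Phi_0'=(\lambda_0+f^T_u(t,0))\Phi_0$ etc.) makes the nonlinear term help, provided $\mu$ is below $\lambda_0/2$ and $\lambda_1/2$; in the middle region the term $\dot\xi^+U_\xi$ with $\dot\xi^+(t)=C q(t)$ dominates the bounded bad contribution because $U_\xi\le -\kappa<0$ there. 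Choosing $q_0$ large enough (depending on $f^T$, $U$, and the modulus of continuity data, but the front in \eqref{soussur8} only needs the exponential shape) closes the supersolution inequality for all $t>0$; the subsolution is symmetric. Then I would check the initial ordering: the hypotheses $\liminf_{-\infty}h>1-\gamma$, $\limsup_{+\infty}h<\gamma$ together with uniform continuity of $h$ and $0\le h\le 1$ let me pick $\underline\xi^0$, $\overline\xi^0$ and $q_0$ so that $w^-(0,\cdot)\le h\le w^+(0,\cdot)$ on all of $\R$ — this uses that $U(0,\xi+\overline\xi^0)$ can be pushed close to $1$ on any prescribed half-line by taking $\overline\xi^0$ very negative, with the constant layer $q_0(\cdots)$ covering the bounded overshoot elsewhere. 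Finally the parabolic comparison principle applied to \eqref{systv} on $(0,+\infty)\times\R$ gives $w^-(t,\xi)\le v(t,\xi)\le w^+(t,\xi)$, and since $\xi^\pm(t)\to$ finite limits and $U_\xi$ is bounded, $U(t,\xi+\xi^\pm(t))=U(t,\xi+\underline\xi^0 \text{ or }\overline\xi^0)+O(e^{-\mu t})$, which after renaming constants yields exactly \eqref{soussur8}.

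The main obstacle I anticipate is handling the three regions in a genuinely uniform way: because $f^T$ is only bistable \emph{on average}, the pointwise sign of $f^T_u(t,u)$ near $0$ or $1$ can be positive for some $t$, so the naive Fife--McLeod bound "$f^T_u\le -\gamma$ near the equilibria" fails. The fix is to absorb the sign information into the time-periodic eigenfunctions $\Phi_0,\Phi_1$ — i.e. one should really work with the weighted ansatz $U(t,\xi+\xi^+(t))+q_0e^{-\mu t}\bigl(\Phi_0(t)+\Phi_1(t)\bigr)$ rather than with the sup-norms, carry the computation there, and only at the end bound $\Phi_i(t)\le\|\Phi_i\|_\infty$; the relations $\Phi_i'=(\lambda_i+f^T_u(t,i))\Phi_i$ exactly cancel the dangerous $f^T_u$ terms and leave a clean coefficient $\lambda_i-\mu>0$. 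Matching the tail estimates to the middle estimate at the (time-dependent but bounded) interface $\xi$-values, and verifying the path-connectedness needed for the strong maximum principle are then routine. I would also need to be slightly careful that $\delta_\pm$ from Lemmas \ref{TH1comparaison}--\ref{TH2comparaison} and the $\delta$ chosen here are compatible, but that is just taking the smaller of finitely many thresholds.
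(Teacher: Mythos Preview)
Your overall strategy---Fife--McLeod sub/supersolutions with a drifting shift $\xi^\pm(t)$ and an exponentially decaying correction weighted by the time-periodic eigenfunctions---is exactly the right one, and you correctly diagnose that the bistable-on-average hypothesis forces the eigenfunctions $\Phi_0,\Phi_1$ into the ansatz rather than constants. However, the specific ansatz you propose, $U(t,\xi+\xi^+(t))+q_0e^{-\mu t}\bigl(\Phi_0(t)+\Phi_1(t)\bigr)$, does \emph{not} give the clean cancellation you claim. In the right tail (where $U$ is close to $0$), the nonlinear defect $f^T(t,U)-f^T(t,w^+)$ is approximately $-f^T_u(t,0)\,q_0e^{-\mu t}(\Phi_0+\Phi_1)$; combining this with the time derivative of the correction via $\Phi_i'=(\lambda_i+f^T_u(t,i))\Phi_i$ leaves, besides the good terms $(\lambda_i-\mu)\Phi_i$, an uncontrolled cross term $\bigl(f^T_u(t,1)-f^T_u(t,0)\bigr)\Phi_1(t)$. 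This term has no sign, its time average is not zero in any useful way, and in the tail $U_\xi\to 0$ so the drift cannot absorb it. The symmetric problem occurs in the left tail with the roles of $\Phi_0$ and $\Phi_1$ exchanged.

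The paper's remedy is to interpolate spatially between the two eigenfunctions with a smooth cutoff: the correction is $q_0e^{-\mu t}\bigl[\chi(\xi+\Lambda(t))\Phi_0(t)+\{1-\chi(\xi+\Lambda(t))\}\Phi_1(t)\bigr]$, where $\chi$ is nondecreasing, equals $1$ for large $\xi$ and $0$ for very negative $\xi$. Then in the right tail only $\Phi_0$ is present and the cancellation is exact; in the left tail only $\Phi_1$ is present; and in the middle zone the extra terms coming from $\chi',\chi''$ are bounded and are absorbed by the drift $\Lambda'(t)U_\xi$ since $U_\xi\le -C_1<0$ there. Two smaller points: $q_0$ must be taken \emph{small} (in fact $q_0<\gamma$), not large, so that the Taylor-type estimates \eqref{forall3} controlling $f^T(t,U)-f^T(t,\underline u)$ are valid---the initial ordering is arranged by choosing the shifts $\underline\xi,\overline\xi$, not by enlarging $q_0$; and no strong maximum principle is needed in this lemma, only the weak comparison principle on $(0,+\infty)\times\R$.
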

\begin{proof}
We are only going to prove the left inequality, the other is similar. We begin by defining parameters which are independant of $h$ and from which we are going to build a subsolution of \eqref{systv} on $\R^+ \times \R$. We define $$\mu=\min\Big\{\frac{\lambda_0}{2},\frac{\lambda_1}{2}\Big\}>0.$$
The regularity of the function $f^T$ implies that there exists $u_0>0$ such that 
\begin{equation}
\label{stab18}
\forall u \in [0,u_0],~\forall t \in \R,~~
\begin{cases}
|f^T_u(t,u)-f^T_u(t,0)|\leq \frac{\lambda_0-\mu}{4},\\
|f^T_u(t,1)-f^T_u(t,1-u)|\leq \frac{\lambda_1-\mu}{4}.
\end{cases}
\end{equation}
Since $U(\cdot,+\infty)=0$ and $U(\cdot,-\infty)=1$ uniformly on $\R$, there exist $\xi^+>0$ and $\xi^-<0$ such that 
\begin{equation}
\label{stab28}
\forall t\in \R,~~\forall \tilde{\xi}\geq \xi^+,~~~~U(t,\tilde{\xi})\leq u_0,
\end{equation}
\begin{equation}
\label{stab38}
\forall t\in \R,~~\forall \tilde{\xi}\leq \xi^-,~~~~1-U(t,\tilde{\xi})\leq u_0.
\end{equation}
We consider a nondecreasing function $\chi$ in $\mathcal{C}^2(\R)$ such that 
$$\forall \tilde{\xi}\geq \xi^+,~~\chi(\tilde{\xi})=1~~\text{and}~~\forall \tilde{\xi}\leq \xi^- ,~\chi(\tilde{\xi})=0.$$
According to the fact that the function $U_\xi$ is continuous, negative and $T$-periodic, one has $\sup_{\R \times [\xi^-,\xi^+]}U_\xi>0$, and there exist $\tilde{q}_0>0$ and $C_1>0$ (depending only on $f^T$ and $U$) such that 
\begin{equation}
\label{forall}
\forall q \in [0,\tilde{q}_0],~\forall t\in \R^+,~\forall \tilde{\xi} \in [\xi^-,\xi^+],~~U_\xi(t,\tilde{\xi})+qe^{-\mu t}\chi'(\tilde{\xi})\big(\Phi_1(t)-\Phi_0(t)\big)<-C_1.
\end{equation}
There exists $C_2>0$ such that
\begin{equation}
\label{forall1}
\forall t \in \R,~\forall \tilde{\xi} \in [\xi^-,\xi^+],~~\chi(\tilde{\xi})\Phi_0(t)+\big(1-\chi(\tilde{\xi})\big)\Phi_1(t)< \frac{C_2}{\|f_u^T\|_{\infty}}. \end{equation}
There exists $C_3>0$ such that
\begin{multline}
\label{forall2}
\forall t \in \R,~\forall \tilde{\xi} \in [\xi^-,\xi^+],~~\chi(\tilde{\xi})\Phi_0(t)\big( \mu-\lambda_0-f_u(t,0)  \big)+\big(1-\chi(\tilde{\xi})\big)\Phi_1(t)\big( \mu-\lambda_1-f_u(t,1)\big)\\
+\big( c\chi'(\tilde{\xi})+\chi''(\tilde{\xi}) \big)\big(\Phi_0(t)-\Phi_1(t)\big)<C_3.
\end{multline}
We define
\begin{equation}
\label{defomega}
\om=\frac{C_2+C_3}{\mu C_1}>0.
\end{equation}
From the continuity and the $T$-periodicity of $f^T_u$, there exists $\gamma \in (0,\min\{\tilde{q}_0,1\})$ (depending only on $f^T$ and $U$) such that 
\begin{multline}
\label{forall3}
\forall t\in\R,~\forall (u,u')\in [0,1]^2,~~|u-u'| \leq \gamma \max\{\|\Phi_0\|_{\infty},\|\Phi_1\|_\infty\}  \Rightarrow \\ |f^T(t,u)-f^T(t,u')-f^T_u(t,u')(u-u')|
 \leq \min \Big\{ \frac{\lambda_0-\mu}{4},\frac{\lambda_1-\mu}{4} \Big\} |u-u'|.
\end{multline}
We now consider a uniformly continuous function $h:\R \to [0,1]$ such that 
$$\liminf\limits_{\xi \to -\infty}h(\xi)>1-\gamma~~\text{and}~~\limsup\limits_{\xi \to +\infty}h(\xi)<\gamma,$$
and we denote $v$ the solution of \eqref{systv}. We consider a real number $q_0>0$ such that $1-\liminf\limits_{\xi \to -\infty}h(\xi)<q_0<\gamma.$ We thus have $\liminf\limits_{\xi \to -\infty}h(\xi)>1-q_0.$ Consequently, there exists $\xi_m \in \R$ such that for all $\xi \leq \xi_m$ we have $h(\xi)\geq 1-q_0$. Yet, since $h\geq 0$ and $U(0,+\infty)=0$, there exists $\underline{\xi} \in \R$ such that for all $\xi \geq \xi_m$, we have $h(\xi) \geq U(0,\xi+\underline{\xi})-q_0.$ According to the fact that $U\leq1$ and that for all $\xi \in \R$ we have $\chi(\xi+\underline{\xi})\Phi_0( 0 )+(1-\chi(\xi+\underline{\xi}))\Phi_1(0)=1$, we have that
\begin{equation}
\label{compenzero}
\forall \xi \in \R,~~~h(\xi) \geq U(0,\xi+\underline{\xi})-q_0\Big[\chi(\xi+\underline{\xi})\Phi_0(0)+\{1-\chi(\xi+\underline{\xi})\}\Phi_1(0)\Big].
\end{equation}
Finally, we define 
$$\underline{\Lambda}(t)=\om q_0 (1-e^{-\mu t}) +\underline{\xi}.$$
We are going to show that the function defined by
$$\u(t,\xi) =\max\Big\{0, U\big(t,\xi+\underline{\Lambda}(t)\big) -q_0e^{-\mu t}\Big[ \chi\big(\xi+\underline{\Lambda}(t)\big)\Phi_0(t)+\big\{1-\chi\big(\xi+\underline{\Lambda}(t)\big)\big\}\Phi_1(t)\Big]\Big\}.$$
is a subsolution of \eqref{systv} on $\R^+\times \R$. We begin by noticing that according to \eqref{compenzero}, for any $\xi \in \R$, we have $\underline{u}(0,\xi) \leq v(0,\xi)$. We divide now the space into three zones:
$$\Omega^-=\big\{(t,\xi)\in(0,+\infty)\times\R~|~\xi+\underline{\Lambda}(t)<\xi^-, ~\text{and}~\u(t,\xi)>0\big\},$$
$$\Omega^+=\big\{(t,\xi)\in(0,+\infty)\times\R~|~\xi+\underline{\Lambda}(t)>\xi^+, ~\text{and}~\u(t,\xi)>0\big\},$$
$$\Omega^0=\big\{(t,\xi)\in(0,+\infty)\times\R~|~\xi+\underline{\Lambda}(t)\in[\xi^-,\xi^+], ~\text{and}~\u(t,\xi)>0\big\}.$$
Since $f^T(t,0)=0$, we only have to show that $\underline{u}_t- c\underline{u}_{\xi}-\underline{u}_{\xi\xi}-f^T(t,\underline{u})\leq0$ on $\Omega^+\cup \Omega^- \cup \Omega^0$.\\
$\textit{1}^{st}~step$: we show that $\underline{u}_t- c\underline{u}_{\xi}-\underline{u}_{\xi\xi}-f^T(t,\underline{u})\leq0$ on $\Omega^+$. We have
$$\forall (t,\xi)\in\Omega^+,~~\u(t,\xi) = U\big(t,\xi+\underline{\Lambda}(t)\big) -q_0e^{-\mu t}\Phi_0(t) .$$
Consequently, for all $(t,\xi)\in\Omega^+$, then we have
\begin{multline*}
$$\underline{u}_t(t,\xi)- c\underline{u}_{\xi}(t,\xi)-\underline{u}_{\xi\xi}(t,\xi)-f^T\big(t,\underline{u}(t,\xi)\big)=f^T\big(t,U\big(\xi+\underline{\Lambda}(t)\big)\big)-f^T\big(t,\u(t,\xi)\big)\\
+q_0e^{-\mu t}\Phi_0(t)\big[\mu-\lambda_0-f^T_u(t,0)\big]+\underline{\Lambda}'(t)U_\xi\big(t,\xi+\underline{\Lambda}(t)\big).
\end{multline*}
We have
\begin{equation*}
\begin{array}{l}
f^T\big(t,U\big(t,\xi+\underline{\Lambda}(t)\big)\big)-f^T\big(t,\u(t,\xi)\big)\\
 =\Big\{f^T\big(t,U\big(t,\xi+\underline{\Lambda}(t)\big)\big)-f^T\big(t,\underline{u}(t,\xi)\big)+f^T_u\big(t,U\big(t,\xi+\underline{\Lambda}(t)\big)\big)\big[\underline{u}(t,\xi)-U\big(t,\xi+\underline{\Lambda}(t)\big)\big]\Big\}\\
  +\Big\{\big[f^T_u\big(t,U\big(t,\xi+\underline{\Lambda}(t)\big)\big)-f^T_u(t,0)\big]\big[U\big(t,\xi+\underline{\Lambda}(t)\big)-\underline{u}\big(t,\xi\big)\big]\Big\}\\
  +f^T_u(t,0)\Big\{U\big(t,\xi+\underline{\Lambda}(t)\big)-\underline{u}(t,\xi)\Big\}.
\end{array}
\end{equation*}
According to the fact that $0<q_0 \leq \gamma$, \eqref{forall3} yields 
\begin{multline*}
f^T\big(t,U\big(t,\xi+\underline{\Lambda}(t)\big)\big)-f^T\big(t,\underline{u}(t,\xi)\big)+f^T_u\big(t,U\big(t,\xi+\underline{\Lambda}(t)\big)\big)\big[\underline{u}(t,\xi)-U\big(t,\xi+\underline{\Lambda}(t)\big)\big]\\
\leq \frac{\lambda_0-\mu}{4}q_0\Phi_0(t)e^{-\mu t}.
\end{multline*}
Furthermore, according to $\eqref{stab18}~\text{and}~\eqref{stab28}$, we have 
$$\big[f^T_u\big(t,U\big(t,\xi+\underline{\Lambda}(t)\big)\big)-f^T_u(t,0)\big]\big[U\big(t,\xi+\underline{\Lambda}(t)\big)-\underline{u}\big(t,\xi\big)\big] \leq \frac{\lambda_0-\mu}{4}q_0\Phi_0(t)e^{-\mu t}.$$
So, since $\underline{\Lambda}' \geq0$ and $U_\xi<0$, we have 
$$\underline{u}_t(t,\xi)- c\underline{u}_{\xi}(t,\xi)-\underline{u}_{\xi\xi}(t,\xi)-f^T\big(t,\underline{u}(t,\xi)\big)\leq\big[\mu-\lambda_0+\frac{\lambda_0-\mu}{4}+\frac{\lambda_0-\mu}{4}\big]q_0\Phi_0(t)e^{-\mu t}\leq0.$$
$\textit{2}^{nd}~step$: we show that $\underline{u}_t- c\underline{u}_{\xi}-\underline{u}_{\xi\xi}-f^T(t,\underline{u})\leq0$ on $\Omega^-$. We have
$$\forall (t,\xi)\in\Omega^-,~~\u(t,\xi) =U\big(t,\xi+\underline{\Lambda}(t)\big) -q_0e^{-\mu t}\Phi_1(t).$$
Hence, for all $(t,\xi)\in\Omega^-$,
\begin{multline*}
\underline{u}_t(t,\xi)- c\underline{u}_{\xi}(t,\xi)-\underline{u}_{\xi\xi}(t,\xi)-f^T\big(t,\underline{u}(t,\xi)\big)=f^T\big(t,U\big(\xi+\underline{\Lambda}(t)\big)\big)-f^T\big(t,\u\big(t,\xi\big)\big)\\
+q_0e^{-\mu t}\Phi_1(t)\big[\mu-\lambda_1-f^T_u(t,1)\big]+\underline{\Lambda}'(t)U_\xi\big(t,\xi+\underline{\Lambda}(t)\big).
\end{multline*}
In the same way as previously, since $\underline{\Lambda}' \geq 0$ and $U_\xi<0$, we have that 
$$\underline{u}_t(t,\xi)- c\underline{u}_{\xi}(t,\xi)-\underline{u}_{\xi\xi}(t,\xi)-f^T\big(t,\underline{u}(t,\xi)\big)\leq\big[\mu-\lambda_1+\frac{\lambda_1-\mu}{4}+\frac{\lambda_1-\mu}{4}\big]q_0\Phi_1(t)e^{-\mu t}\leq0.$$
$\textit{3}^{rd}~step$: we show that $\underline{u}_t- c\underline{u}_{\xi}-\underline{u}_{\xi\xi}-f^T(t,\underline{u})\leq0$ on $\Omega^0$. For all $(t,\xi)\in\Omega^0$ there holds
\begin{equation}
\label{tableau}
\begin{array}{l}
\underline{u}_t(t,\xi)- c\underline{u}_{\xi}(t,\xi)-\underline{u}_{\xi\xi}(t,\xi)-f^T\big(t,\underline{u}(t,\xi)\big)\\
\qquad \qquad \qquad=f^T\big(t,U\big(t,\xi+\underline{\Lambda}(t)\big)\big)-f^T\big(t,\underline{u}(t,\xi)\big)\\
\qquad \qquad \qquad+q_0e^{-\mu t}\Phi_0(t)\chi\big(\xi+\underline{\Lambda}(t)\big)\big\{\mu-\lambda_0-f^T_u(t,0)\big\} \\
\qquad \qquad \qquad+q_0e^{-\mu t}\Phi_1(t)\big\{1-\chi\big(\xi+\underline{\Lambda}(t)\big)\big\}\big\{\mu-\lambda_1-f^T_u(t,1)\big\}\\
\qquad \qquad \qquad+q_0e^{-\mu t}\big\{c \chi'\big(\xi+\underline{\Lambda}(t)\big)+ \chi''\big(\xi+\underline{\Lambda}(t)\big)\big\}\big\{\Phi_0(t)-\Phi_1(t)\big\}\\
\qquad \qquad \qquad+\underline{\Lambda}'(t)\big\{ U_\xi\big(t,\xi+\underline{\Lambda}(t)\big)+q_0e^{-\mu t}\chi'\big(\xi+\underline{\Lambda}(t)\big)\big(\Phi_1(t)-\Phi_0(t)\big)\big\}.
\end{array}
\end{equation}
We have 
$$f^T\big(t,U\big(t,\xi+\underline{\Lambda}(t)\big)\big)-f^T\big(t,\underline{u}\big(t,\xi\big)\big)\leq \|f^T_u\|_\infty \big|U\big(t,\xi+\underline{\Lambda}(t)\big)-\underline{u}\big(t,\xi\big)\big|.$$
According to \eqref{forall1} applied with $\tilde{\xi}=\xi+\underline{\Lambda}(t)$, it occurs that
\begin{equation}
\label{avecC2}
f^T\big(t,U\big(t,\xi+\underline{\Lambda}(t)\big)\big)-f^T\big(t,\underline{u}\big(t,\xi\big)\big)\leq C_2q_0e^{-\mu t}.
\end{equation}
According to \eqref{forall2} applied with $\tilde{\xi}=\xi+\underline{\Lambda}(t)$, we have 
\begin{equation}
\begin{array}{l}
\label{avecC3}
q_0e^{-\mu t}\Phi_0(t)\chi\big(\xi+\underline{\Lambda}(t)\big)\Big\{\mu-\lambda_0-f^T_u(t,0)\Big\} \\
\qquad \qquad \qquad+q_0e^{-\mu t}\Phi_1(t)\big\{1-\chi\big(\xi+\underline{\Lambda}(t)\big)\big\}\big\{\mu-\lambda_1-f^T_u(t,1)\big\}\\
\qquad \qquad \qquad+q_0e^{-\mu t}\big\{c \chi'\big(\xi+\underline{\Lambda}(t))+ \chi''\big(\xi+\underline{\Lambda}(t)\big)\big\}\big\{\Phi_0(t)-\Phi_1(t)\big\} \\
\qquad \qquad \qquad\leq C_3 q_0 e^{-\mu t}.
\end{array}
\end{equation}
Finally, remembering that $\underline{\Lambda}'>0$ and according to \eqref{forall} applied with $\tilde{\xi}=\xi+\underline{\Lambda}(t)$, it follows that 
\begin{equation}
\label{avecC1}
\big\{ U_\xi\big(t,\xi+\underline{\Lambda}(t)\big)+q_0e^{-\mu t}\chi'\big(\xi+\underline{\Lambda}(t)\big)\big(\Phi_1(t)-\Phi_0(t)\big)\big\}\underline{\Lambda}'(t)<-C_1\underline{\Lambda}'(t).
\end{equation}
Consequently, according to \eqref{defomega}, \eqref{tableau}, \eqref{avecC2}, \eqref{avecC3} and \eqref{avecC1} we obtain for all $(t,\xi)\in \Omega_0$,
$$\underline{u}_t(t,\xi)- c\underline{u}_{\xi}(t,\xi)-\underline{u}_{\xi\xi}(t,\xi)-f^T\big(t,\underline{u}(t,\xi)\big) \leq -C_1\underline{\Lambda}'(t)+(C_2+C_3)q_0e^{-\mu t}\leq 0.$$
We conclude from the maximum principle that \\
$$\forall t \in \R^+,~~\forall \xi \in \R,~~~~v(t,\xi)\geq\underline{u}(t,\xi).$$
So, as $U_\xi<0$ on $\R^2$ and $\underline{\Lambda}'\geq0$ on $\R^+$, if we define the real number $\underline{\xi}^0=\underline{\xi}+\om q_0$, it occurs that for all $t\geq0$ and $\xi\in \R$
\begin{align*}
v\big(t,\xi\big) & \geq   U\big(t,\xi+\underline{\Lambda}(t)\big)-q_0e^{-\mu t}\big[\chi\big(\xi+\underline{\Lambda}(t)\big)\Phi_0(t)+\big\{1-\chi\big(\xi+\underline{\Lambda}(t)\big)\big\}\Phi_1(t)\big]\\
&\geq  U\big(t,\xi+\underline{\xi}^0\big)-q_0\big(\|\Phi_0\|_\infty+\|\Phi_1\|_\infty\big)e^{-\mu t}.
\end{align*}
It is exactly the same scheme to prove the right inequality of \eqref{soussur8}, namely we begin by showing that there exists a constant $\overline{\xi}$ such that 
$$\forall \xi \in \R,~~~h(\xi) \leq U(0,\xi+\overline{\xi})+q_0 \big[\Phi_0(0)\chi( \xi+\overline{\xi})+\Phi_1(0)\big(1-\chi( \xi+\overline{\xi})\big)\big].$$
Then we can show that there exists a positive constant $\overline{\omega}$ ($\overline{\omega}$ could actually be equal to $\underline{\omega}$ without loss of generality) such that if we take $\overline{\Lambda}(t)=-\overline{\omega}q_0(1-e^{-\mu t}) + \overline{\xi}$ (we need here that $\overline{\Lambda}' \leq 0$), and if we define
$$\overline{u}(t,\xi)=\min \Big\{1, U\big(t,\xi+\overline{\Lambda}(t)\big) +q_0e^{-\mu t}\Big[ \chi\big(\xi+\overline{\Lambda}(t)\big)\Phi_0(t)+\big\{1-\chi\big(\xi+\overline{\Lambda}(t)\big)\big\}\Phi_1(t)\Big]\Big\},$$
then, for all $(t,\xi)\in (0,+\infty)\times \R$ such that $\overline{u}(t,\xi)<1$ we have
$$\overline{u}_t(t,\xi)- c\overline{u}_{\xi}(t,\xi)-\overline{u}_{\xi\xi}(t,\xi)-f^T\big(t,\overline{u}(t,\xi)\big)\geq 0,$$
If we define the real number $\overline{\xi}^0=\overline{\xi}-\overline{\omega}q_0$, we conclude that for all $t\geq0$ and $\xi \in \R$, we have
\begin{align*}
v(t,\xi) & \leq   U\big(t,\xi+\overline{\Lambda}(t)\big)+q_0e^{-\mu t}\big[\chi\big(\xi+\overline{\Lambda}(t)\big)\Phi_0(t)+\big\{1-\chi\big(\xi+\overline{\Lambda}(t)\big)\big\} \Phi_1(t)\big]\\
&\leq  U(t,\xi+\overline{\xi}^0)+q_0\big(\|\Phi_0\|_\infty+\|\Phi_1\|_\infty\big)e^{-\mu t}.
\end{align*}
And the proof of Lemma \ref{lemstab18} is complete.
\end{proof}
\begin{lemma}
\label{lemstab28}
Let $\gamma \in (0,1)$ be as in Lemma \ref{lemstab18}. There exists a positive real number $D$ such that if for some constant $\xi^\sharp$ and some $0<\e<\gamma$, we have
$$\forall \xi \in \R,~~|v(0,\xi)-U(0,\xi+\xi^\sharp)|\leq \e,$$
then, 
$$\forall t \in \R^+,~~\forall \xi \in \R,~~|v(t,\xi)-U(t,\xi+\xi^\sharp)|\leq D\e.$$
\end{lemma}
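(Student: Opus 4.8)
The plan is to re-run the sub- and supersolution construction from the proof of Lemma \ref{lemstab18}, specialised to the cleaner hypothesis at hand. The starting observation is that in that proof the amplitude parameter $q_0$ only had to satisfy $0<q_0<\gamma$: the cut-off $\chi$, the constants $C_1,C_2,C_3$ appearing in \eqref{forall}--\eqref{forall2}, and hence the drift rate $\om$ of \eqref{defomega}, are chosen independently of $q_0$; moreover the shift $\underline{\xi}$ (resp. $\overline{\xi}$) is a free parameter, the subsolution (resp. supersolution) inequalities having been verified pointwise in $\tilde{\xi}=\xi+\underline{\Lambda}(t)$. So I would take $q_0=\e$ (admissible since $0<\e<\gamma$) and set both initial shifts equal to $\xi^\sharp$, i.e. use
$$\underline{\Lambda}(t)=\om\e\,(1-e^{-\mu t})+\xi^\sharp,\qquad \overline{\Lambda}(t)=-\om\e\,(1-e^{-\mu t})+\xi^\sharp,$$
together with the functions $\u$ and $\overline{u}$ defined verbatim as in that proof. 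Since $\Phi_0(0)=\Phi_1(0)=1$ and $\chi+(1-\chi)\equiv1$, the hypothesis $|v(0,\xi)-U(0,\xi+\xi^\sharp)|\le\e$ gives at once $\u(0,\xi)=\max\{0,U(0,\xi+\xi^\sharp)-\e\}\le v(0,\xi)$ and $\overline{u}(0,\xi)=\min\{1,U(0,\xi+\xi^\sharp)+\e\}\ge v(0,\xi)$ for every $\xi$, the truncations being harmless as $0\le v\le1$.

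By the computations already carried out in Lemma \ref{lemstab18}, $\u$ is a subsolution and $\overline{u}$ a supersolution of \eqref{systv} on $\R^+\times\R$; the maximum principle then gives $\u(t,\xi)\le v(t,\xi)\le\overline{u}(t,\xi)$ for all $t\ge0$, $\xi\in\R$. Replacing the truncated expressions by the untruncated ones (which still bound $v$ from the appropriate side) and using $e^{-\mu t}\le1$ together with $\chi\Phi_0+(1-\chi)\Phi_1\le\|\Phi_0\|_\infty+\|\Phi_1\|_\infty$, this becomes
$$U\big(t,\xi+\underline{\Lambda}(t)\big)-\e\big(\|\Phi_0\|_\infty+\|\Phi_1\|_\infty\big)\le v(t,\xi)\le U\big(t,\xi+\overline{\Lambda}(t)\big)+\e\big(\|\Phi_0\|_\infty+\|\Phi_1\|_\infty\big).$$

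It remains to absorb the residual drifts. By standard interior parabolic estimates for $U_t-cU_\xi-U_{\xi\xi}=f^T(t,U)$ with $0\le U\le1$ and $f^T$ bounded, $U_\xi$ is bounded on $\R^2$; write $M:=\|U_\xi\|_{L^\infty(\R^2)}$. Since $|\underline{\Lambda}(t)-\xi^\sharp|\le\om\e$ and $|\overline{\Lambda}(t)-\xi^\sharp|\le\om\e$ for every $t\ge0$, the mean value theorem in $\xi$ gives $|U(t,\xi+\underline{\Lambda}(t))-U(t,\xi+\xi^\sharp)|\le M\om\e$ and similarly with $\overline{\Lambda}$. Substituting into the display above yields
$$|v(t,\xi)-U(t,\xi+\xi^\sharp)|\le\big(M\om+\|\Phi_0\|_\infty+\|\Phi_1\|_\infty\big)\,\e,\qquad\forall t\ge0,\ \forall\xi\in\R,$$
so $D:=M\om+\|\Phi_0\|_\infty+\|\Phi_1\|_\infty$ works, and it depends only on $f^T$ and $U$.

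The one point that deserves care when writing this out is the claim in the first paragraph that the construction of Lemma \ref{lemstab18} is genuinely uniform in $q_0$ and covariant in the initial shift — this is exactly what prevents $\om$, and hence $D$, from blowing up as $\e\to0$. Once that is made explicit (it is readable off that proof), and combined with the $\R^2$-boundedness of $U_\xi$, the remainder is routine bookkeeping.
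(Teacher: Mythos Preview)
Your proof is correct and follows exactly the same approach as the paper: take $q_0=\e$, set both initial shifts $\underline{\xi}=\overline{\xi}=\xi^\sharp$, rerun the sub/supersolution machinery of Lemma~\ref{lemstab18}, and then absorb the residual drift $\om\e$ via the bound on $\|U_\xi\|_\infty$, arriving at $D=\|U_\xi\|_\infty\,\om+\|\Phi_0\|_\infty+\|\Phi_1\|_\infty$. Your write-up is in fact more detailed than the paper's (which just sketches these steps in a few lines), and your explicit observation that $\om$ and the other constants were chosen independently of $q_0$ is precisely the point that makes $D$ uniform in $\e$.
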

\begin{proof}
We can adapt the previous proof. If we take $q_0=\e$, then, , for all $\xi\in\R$, we have
$$U(0,\xi+\xi^\sharp)-q_0 \leq h(\xi)=v(0,\xi) \leq U(0,\xi+\xi^\sharp)+q_0.$$
We can then choose $\underline{\xi}$ and $\overline{\xi}$ equal to $\xi^\sharp$. 
Consequently, if we denote $D=\|U_\xi\|_\infty \underline{\omega}+\|\Phi_0\|_\infty+\|\Phi_1\|_\infty$ (independent of $\e$), the conclusion of Lemma \ref{lemstab18} with $\underline{\xi}^0=\underline{\xi}+\underline{\omega}q_0=\xi^\sharp+\underline{\omega}\e$ and $\overline{\xi}^0=\overline{\xi}-\overline{\omega}q_0=\xi^\sharp-\overline{\omega}\e$ becomes
$$\forall t \in \R^+,~~\forall \xi \in \R,~U(t,\xi+\xi^\sharp)-D\e   \leq v(t,\xi) \leq U(t,\xi+\xi^\sharp)+D\e ,$$
The proof of Lemma \ref{lemstab28} is complete.
\end{proof}
\noindent
Before carrying out the proof of Theorem \ref{stability}, we need an additional Liouville type lemma for the solution which are trapped between two shifts of a front. 
\begin{lemma}
\label{lemstab38}
Let $v \in \mathcal{C}^{1+\frac{\alpha}{2},2+\alpha}(\R \times \R,(0,1))$ (with $0<\alpha<1$) be a solution of $\partial_t v -c \partial_\xi v - \partial_{\xi\xi} v =f^T(t,v)$ on $\R^2$ such that 
\begin{equation}
\label{SSTT38}
\forall (t,\xi)\in  \R^2,~~U(t,\xi) \leq v(t,\xi) \leq U(t,\xi-a).
\end{equation}
where $(U(t,\xi),c)$ is a pulsating front solution of \eqref{travel}, and $a$ is a nonnegative real number. Then there exists $b \in [0,a]$ such that
$$\forall (t,\xi) \in \R^2,~~v(t,\xi)=U(t,\xi-b).$$
\end{lemma}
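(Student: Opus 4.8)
The plan is to run a sliding argument in the variable $\xi$, using the comparison principles of Lemmas~\ref{TH1comparaison} and \ref{TH2comparaison} on the two half--lines $\{\xi\ge R_+\}$ and $\{\xi\le R_-\}$ and a strong maximum principle on the bounded strip in between. For each $t$, $U(t,\cdot)$ is by Proposition~\ref{MONOT} a decreasing homeomorphism of $\R$ onto $(0,1)$, so \eqref{SSTT38} lets me write $v(t,\xi)=U(t,\xi-\rho(t,\xi))$ for a unique continuous $\rho:\R^2\to[0,a]$; the claim is exactly that $\rho$ is constant. I set $b_*=\inf_{\R^2}\rho=\sup\{b\in[0,a]:v\ge U(\cdot,\cdot-b)\ \text{on}\ \R^2\}$ and $b^*=\sup_{\R^2}\rho=\inf\{b\in[0,a]:v\le U(\cdot,\cdot-b)\ \text{on}\ \R^2\}$, so that $U(\cdot,\cdot-b_*)\le v\le U(\cdot,\cdot-b^*)$ and $b_*\le b^*$, and it remains to show $b_*=b^*$. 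Both $w:=v-U(\cdot,\cdot-b_*)\ge 0$ and $\tilde w:=U(\cdot,\cdot-b^*)-v\ge 0$ solve linear parabolic equations $\partial_t\phi-c\partial_\xi\phi-\partial_{\xi\xi}\phi=\kappa\phi$ with $\kappa$ bounded (mean value theorem applied to $f^T$); if either vanishes at one point, the strong parabolic maximum principle gives $\phi\equiv 0$ on a half--space $\{t\le t_0\}$ and forward uniqueness for the Cauchy problem then propagates $v\equiv U(\cdot,\cdot-b_*)$ (resp. $v\equiv U(\cdot,\cdot-b^*)$) to all of $\R^2$, which finishes the proof. So I may assume $U(\cdot,\cdot-b_*)<v<U(\cdot,\cdot-b^*)$ strictly on $\R^2$, hence $b_*<b^*$, and aim for a contradiction.

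To contradict the maximality of $b_*$ I try to slide it to $b_*+\delta$. Using the uniform limits of $U$ and \eqref{SSTT38}, I fix $R_+$ with $U(t,\xi-a)\le\delta_+$ (hence $U(t,\xi-b)\le\delta_+$ for every $b\in[0,a]$) and $v(t,\xi)\le\delta_+$ on $\R\times[R_+,+\infty)$, and $R_-<R_+$ with $v(t,\xi)\ge 1-\delta_-$ on $\R\times(-\infty,R_-]$, where $\delta_\pm$ are the constants from Lemmas~\ref{TH1comparaison}--\ref{TH2comparaison} applied with $\g=\G=f^T$. If I can show $v\ge U(\cdot,\cdot-b_*-\delta)$ merely on the strip $\R\times[R_-,R_+]$, then Lemma~\ref{TH1comparaison} (with $\v=U(\cdot,\cdot-b_*-\delta)$ and $\V=v$) propagates this to $\R\times[R_+,+\infty)$ and Lemma~\ref{TH2comparaison} to $\R\times(-\infty,R_-]$, contradicting the definition of $b_*$. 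Since $\|U_\xi\|_\infty<\infty$, the strip inequality holds for $\delta$ small provided $m:=\inf_{\R\times[R_-,R_+]}w>0$.

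The obstacle, and the step I expect to be most delicate, is that $\R\times[R_-,R_+]$ is bounded in $\xi$ but not in $t$, so the everywhere--positivity of $w$ does not by itself give $m>0$. I would handle this by compactness in time. If $m=0$, pick $(t_n,\xi_n)$ with $\xi_n\in[R_-,R_+]$ and $w(t_n,\xi_n)\to 0$; since $w>0$ everywhere, $|t_n|\to+\infty$ after extraction. Writing $t_n=k_nT+s_n$ with $s_n\in[0,T)$, the translates $v(\cdot+k_nT,\cdot)$ solve the same equation (by $T$--periodicity of $f^T$), remain sandwiched between $U$ and $U(\cdot,\cdot-a)$ and satisfy $v(\cdot+k_nT,\cdot)\ge U(\cdot,\cdot-b_*)$ (by $T$--periodicity of $U$); by interior parabolic estimates they converge in $\mathcal{C}^{1,2}_{\mathrm{loc}}(\R^2)$, up to a subsequence, to an entire solution $v_\infty\ge U(\cdot,\cdot-b_*)$ for which $v_\infty-U(\cdot,\cdot-b_*)$ vanishes at the limit of $(s_n,\xi_n)$; the strong maximum principle and forward uniqueness then force $v_\infty\equiv U(\cdot,\cdot-b_*)$. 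Upgrading the local convergence to uniform convergence on $\R\times[0,T]$ via the pinching $|U(t,\xi-a)-U(t,\xi)|\to 0$ as $|\xi|\to\infty$ (uniform in $t$), and then using $T$--periodicity of $U$ together with Lemma~\ref{lemstab28}, I obtain that $v$ is $D\e$--close to $U(\cdot,\cdot-b_*)$ on $[\tau_\e,+\infty)\times\R$ for suitable $\tau_\e$; if $k_n\to-\infty$ one may take $\tau_\e\to-\infty$, which forces $v\equiv U(\cdot,\cdot-b_*)$ (excluded by strictness), so necessarily $k_n\to+\infty$ and $v(t,\cdot)\to U(t,\cdot-b_*)$ uniformly as $t\to+\infty$. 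Running the symmetric argument with $\tilde w$ and $b^*$ (sliding $b^*$ downward) gives in the same way $v(t,\cdot)\to U(t,\cdot-b^*)$ uniformly as $t\to+\infty$. Comparing the two limits yields $U(t,\cdot-b_*)\equiv U(t,\cdot-b^*)$, hence $b_*=b^*$, contradicting $b_*<b^*$. Therefore the strictness assumption is impossible, and in every case $v=U(\cdot,\cdot-b)$ with $b\in[0,a]$.
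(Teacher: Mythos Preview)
Your argument is correct but follows a genuinely different route from the paper's proof. You slide $U(\cdot,\cdot-b)$ against $v$ in the $\xi$-variable, define the extremal shifts $b_*\le b^*$, and reduce the contradiction to the non-compactness in $t$; you then handle that step by extracting translates $v(\cdot+k_nT,\cdot)$, identifying the limit as a shift of $U$, upgrading to uniform convergence via the pinching $U(t,\xi)\le v\le U(t,\xi-a)$, and finally invoking Lemma~\ref{lemstab28} to propagate the closeness forward in time and force $b_*=b^*$.

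The paper does something rather different: it never slides $U$ against $v$. Instead it compares $v$ with its own time--space translates $v(\cdot+\varepsilon T,\cdot-\sigma)$ for $\varepsilon\in\{-1,1\}$, and runs the sliding argument (using Lemmas~\ref{TH1comparaison}--\ref{TH2comparaison} in the same way you do) to show the optimal shift is $\sigma_\varepsilon^*=0$. The key device to rule out $\sigma_\varepsilon^*\ne 0$ is that the limit $v^*$ would satisfy $v^*(0,0)=v^*(-kT,\varepsilon k\sigma_\varepsilon^*)$ for all $k\in\N$, while being trapped between $U$ and $U(\cdot,\cdot-a)$; sending $k\to+\infty$ and using $U(0,\pm\infty)\in\{0,1\}$ gives a contradiction. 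This yields $v(t,\xi)\le v(t\pm T,\xi)$, hence $v$ is $T$-periodic; together with \eqref{SSTT38} this makes $(v,c)$ a pulsating front, and Theorem~\ref{existenceunicite} identifies it as a shift $U(\cdot,\cdot-b)$.

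What each approach buys: the paper's self-comparison trick avoids Lemma~\ref{lemstab28} entirely and reduces the Liouville statement cleanly to the already-proved uniqueness Theorem~\ref{existenceunicite}. Your approach is more direct (you target $v=U(\cdot,\cdot-b)$ without first establishing periodicity), but it needs the stability estimate of Lemma~\ref{lemstab28} and the uniform-convergence upgrade as auxiliary tools, which makes the write-up heavier. Both are valid and rely on the same comparison principles at their core.
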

\begin{proof}
According to \eqref{SSTT38}, we have that $v(\cdot,-\infty)=1$ and $v(\cdot,+\infty)=0$ uniformly on $\R$. We are going to show that $v(\cdot,\xi)$ is a $T$-periodic function for any real $\xi$. Let $\e \in \{-1,1\}$. As $v(\cdot,+\infty)=0$ uniformly on $\R$, there exists a real number $R_+$ such that 
$$\forall t \in \R,~~\forall \xi \in [R_+,+\infty),~~v(t,\xi) \leq \delta_+,$$
where $\delta_+\in (0,1)$ is defined in Lemma \ref{TH1comparaison}, with $\g=\G=f^T$.\\
As $v(\cdot,-\infty)=1$ uniformly on $\R$, there exists  a real $\sigma_\e$ such that
$$ \forall t \in \R,~~\forall \xi \in (-\infty,R_+],~~v(t+\e T,\xi-\sigma_\e) \geq 1-\delta_-,$$
where $\delta_-\in (0,1)$ is defined in Lemma \ref{TH2comparaison}, with $\g=\G=f^T$.\\
We apply the comparison principles of Lemma \ref{TH1comparaison} on $\R \times [R_+,+\infty)$ and of Lemma \ref{TH2comparaison} on $\R \times (-\infty,R_+]$ to the functions $\v(t,\xi)=v(t,\xi)$ and $\V(t,\xi)=v(t+\e T,\xi-\sigma_\e)$ 
\begin{equation}
\label{juin}
\forall t \in \R,~~\forall \xi \in \R,~~v(t,\xi) \leq v(t+\e T,\xi-\sigma_\e).
\end{equation}
We define 
$$\sigma_\e^*:=\inf \{\sigma_\e \in \R~|~\forall t \in \R ,~~\forall \xi \in \R,~ v(t,\xi)\leq v(t+\e T,\xi -\sigma_\e)  \},$$
which is a well defined real number such that $\sigma_\e^*\leq\sigma_\e$ by \eqref{juin}.
We have by continuity 
\begin{equation}
\label{coince18}
\forall t \in \R,~~\forall \xi \in \R,~~v(t,\xi)\leq v(t+\e T,\xi -\sigma_\e^*).
\end{equation}
As $v(\cdot,-\infty)=1$ uniformly on $\R$, there exists $R_-<R_+$ such that 
$$\forall \sigma \geq \sigma_\e^*-1,~~\forall t \in \R,~~\forall \xi \in (-\infty,R_-],~v(t+\e T,\xi-\sigma)\geq 1-\delta_-,$$
where $\delta_- \in (0,1)$ is as above.\\
We define $$\eta:=\inf\limits_{(t,\xi)\in \R \times [R_-,R_+]}\{v(t+\e T,\xi-\sigma_\e^*)-v(t,\xi)\}.$$
Two cases can occur, either $\eta$ is positive, or $\eta$ is equal to zero. If $\eta>0$, since $\partial_\xi v$ is globally bounded in $\R \times \R$ by standard parabolic estimates, there exists  $\overline{\sigma}_\e \in(\sigma_\e^*-1,\sigma_\e^*)$ such that 
$$\forall t \in \R,~ \forall \xi \in [R_-,R_+],~~v(t,\xi)<v(t+\e T,\xi-\overline{\sigma}_\e).$$
We can apply Lemma \ref{TH1comparaison} to the functions $v$ and $v(\cdot+\e T,\cdot-\overline{\sigma}_\e)$ on $\R \times [R_+,+\infty)$. We obtain 
$$\forall t \in \R,~~ \forall \xi \in [R_+,+\infty),~~v(t,\xi) \leq v(t+\e T,\xi-\overline{\sigma}_\e).$$
In the same way, we can apply Lemma \ref{TH2comparaison} to the functions $v$ and $v(\cdot+\e T,\cdot-\overline{\sigma}_\e)$ on $\R \times (-\infty,R_-]$. We obtain 
$$\forall t \in \R,~~\forall \xi \in (-\infty,R_-],~~v(t,\xi) \leq v(t+\e T,\xi-\overline{\sigma}_\e).$$
To summarize,
$$\forall t \in \R,~~\forall \xi \in  \R,~~v(t,\xi) \leq v(t+ \e T,\xi-\overline{\sigma}_\e).$$
This contradicts the definition of $\sigma_\e^*$. Consequently, we have $\eta=0$. So, there exists  a sequence $(t_n,\xi_n)_n \subset \R \times [R_-,R_+]$ such that 
\begin{equation}
\label{suiteinf8}
v(t_n+\e T, \xi_n-\sigma_\e^*)-v(t_n,\xi_n) \xrightarrow{n \to +\infty} 0.
\end{equation}
We write $t_n=k_nT+t_n'$, with $k_n \in \Z$ and $t_n' \in (0,T]$. The sequences $(t_n')_n$ and $(\xi_n)_n$ are bounded. We thus have up to extraction of a subsequence that $t_n' \xrightarrow{n\to +\infty} t^*\in \R$ and $\xi_n \xrightarrow{n\to +\infty} \xi^*\in \R$. We define $v_n(t,\xi)=v(t+k_nT,\xi)$. As $f^T$ is $T-$periodic, $v_n$ satisfies the same equation as $v$. Furthermore, we have $0 \leq v_n \leq 1$. Consequently, as $(f^T(\cdot,v_n))_n$ is bounded in  $L^\infty(\R^2)$, the parabolic regularity theory implies that for $1\leq p <+\infty$, $(v_n)_n$ is bounded in $W^{1,2;p}_{\text{loc}}(\R^2)$. Yet $W^{1,2;p}_{\text{loc}}(\R^2)$ embeds compactly into $\mathcal{C}^{0,\alpha}_{\text{loc}}(\R^2)$ for $\alpha \in (0,1-2/p)$ for $p>2$. So, there exists $v^*$ such that up to extraction of a subsequence, $(v_n)_n$ converges to $v^*$ in $W^{1,2;p}_{\text{loc}}(\R^2)$ weakly and in $\mathcal{C}^{0,\alpha}_{\text{loc}}(\R^2)$ for all $1< p < +\infty$ and for all $\alpha \in (0,1)$. The function $0\leq v^*\leq 1$ thus satisfies in the sense of distribution the equation $\partial_t v^* -c \partial_\xi v^* - \partial_{\xi\xi} v^* =f^T(t,v^*)$ on $\R^2$. Actually, by parabolic regularity theory, the function $v^*$ is in fact of class $\mathcal{C}^{1,2}(\R^2)$ and it satisfies the previous equation in the classical sense. According to \eqref{suiteinf8}, we thus have 
\begin{equation*}
v^*(t^*+\e T, \xi^*-\sigma_\e^*)=v^*(t^*,\xi^*). 
\end{equation*}
Since on the other hand $v^*(t+\e T, \xi-\sigma_\e^*)\geq v^*(t,\xi)$, for all $(t,\xi) \in \R\times \R$ by \eqref{coince18}, the strong  maximum principle implies that \\
\begin{equation*}
\forall t \in (-\infty, t^*],~~\forall \xi \in \R,~~v^*(t,\xi)=v^*(t+\e T,\xi-\sigma^*_\e).
\end{equation*}
As $t^*\in [0,T]$, the previous equality is true on $\R_-\times \R$, and in particular, we have 
\begin{equation}
\label{coince58}
\forall k \in \N,~v^*(0,0) = v^*(- k T,\e k\sigma_\e^*).
\end{equation}
According to \eqref{SSTT38} and \eqref{coince58} we have that 
\begin{equation}
\label{coince68}
\forall k \in \N,~~U(0,\e k\sigma_\e^*) = U(-k T,\e k\sigma_\e^*)\leq v^*(-k T,\e k\sigma_\e^*)= v^*(0,0) \leq  U(0,-a)<1.
\end{equation}
In the same way, we have that 
\begin{equation}
\label{coince78}
\forall k \in \N,~~0< U(0,0)  \leq v^*(0,0) = v^*(-k T,\e k\sigma_\e^*) \leq U(-k T,\e k\sigma_\e^*-a)=U(0,\e k\sigma_\e^*-a).
\end{equation}
We suppose that $\e\sigma_\e^*<0$. So, we have $\e k\sigma_\e^* \xrightarrow{k \to +\infty} -\infty$. Consequently, as $U(0,-\infty)=1$, we obtain a contradiction in \eqref{coince68} if we pass in the limit when $k$ tends to $+ \infty$.\\
We suppose that $\e\sigma_\e^*>0$. So, we have $\e k\sigma_\e^* \xrightarrow{k \to +\infty} +\infty$. Consequently, as $U(0,+\infty)=0$, we obtain a contradiction in \eqref{coince78} if we pass in the limit when $k$ tends to $+ \infty$.
\vspace{0.5em}\\
Consequently, we have $\sigma_\e^*=0$, and the equation \eqref{coince18} rewrites 
\begin{equation*}
\forall t \in \R,~~\forall \xi \in \R,~~v(t,\xi)\leq v(t+\e T,\xi).
\end{equation*}
For $\e=1$, the previous inequality rewrites 
\begin{equation}
\label{coince88}
\forall t \in \R,~~\forall \xi \in \R,~~v(t,\xi)\leq v(t+ T,\xi).
\end{equation}
And, for $\e=-1$, we have $v(t,\xi)\leq v(t- T,\xi)$, for all $(t,\xi) \in \R^2$.
If we define $t'=t-T$, we have 
\begin{equation}
\label{coince98}
\forall t' \in \R,~~\forall \xi \in \R,~~v(t'+T,\xi)\leq v(t',\xi).
\end{equation}
According to \eqref{coince88} and \eqref{coince98}, we get that 
$$\forall t \in \R,~~\forall \xi \in \R,v(t,\xi) = v(t+T,\xi).$$
which is the desired conclusion.
\end{proof}
\noindent
We can now prove Theorem \ref{stability}.
\begin{proof}
Let $\gamma \in (0,1)$ be as in Lemma \ref{lemstab18} and let $u$ be as in the statement of Theorem \ref{stability}.
We define $v_n(t,\xi)=v(t+nT,\xi)$. Up to extraction of a subsequence, $(v_n)_n$ converges locally uniformly to a $\mathcal{C}^{1+\frac{\alpha}{2},2+\alpha}(\R^2)$ solution $v_\infty$ of $\partial_t v_\infty -c \partial_\xi v_\infty - \partial_{\xi\xi} v_\infty =f^T(t,v_\infty)$, for $\alpha \in (0,1)$. According to Lemma \ref{lemstab18}, there exist some real numbers $\underline{\xi}^0$ and $\overline{\xi}^0$ such that 
$$\forall t \in \R,~~\forall \xi \in \R,~~U(t,\xi+\underline{\xi}^0) \leq v_\infty(t,\xi) \leq U(t,\xi+\overline{\xi}^0).$$
Consequently, Lemma \ref{lemstab38} implies that there exists $\xi_0$ between $\underline{\xi}^0$ and $\overline{\xi}^0$ such that 
$$\forall \xi \in \R,~~\forall t\in \R,~~v_\infty(t,\xi)=U(t,\xi-\xi_0).$$
Let $\e\in (0,\gamma)$ be fixed.  According to the fact that $U(\cdot,-\infty)=1$ and $U(\cdot,+\infty)=0$, and according to Lemma \ref{lemstab18}, there exists an integer $n_0$ such that 
$$\forall \xi \in \R,~~|v(n_0T,\xi)-U(0,\xi-\xi_0)|\leq \e.$$
Lemma \ref{lemstab28} yields
$$\forall t\in \R^+,~~\forall \xi \in \R,~~|v(n_0T+t,\xi)-U(t,\xi-\xi_0)|\leq D\e.$$
where $D$ is independent of $\e$. Since $\e>0$ could be arbitrary small, we get that
$$\lim\limits_{t \to +\infty}v(t,\xi)-U(t,\xi-\xi_0)=0,~~~~\text{uniformly on}~\R.$$
That concludes the proof of the theorem.
\end{proof}
\noindent
Once the global stability of pulsating fronts is established, the uniqueness result in Theorem \ref{existenceunicite} is an easy corollary. This method was used in \cite{Shen3}. In the present paper, we preferred to prove first the uniqueness in Section \ref{caracterisation} because the proof uses some new comparison principle (which have their own interest) and which lead to the monotonicity result.
\begin{proof}
One has to show that if $(U_1,c_1)$ and $(U_2,c_2)$ are two pulsating fronts solving \eqref{travel}, then $c_1=c_2$ and $U_1$ and $U_2$ are equal up to shift in space. Theorem \ref{stability} yields the existence of $\xi_0\in \R$ such that 
\begin{equation}
\label{stabunic8}
\sup_{\xi \in \R}\big|U_2\big(t,\xi+(c_1-c_2)t\big)-U_1\big(t,\xi+\xi_0\big)\big|\xrightarrow{t \to +\infty}0.
\end{equation}
Let $k \in \Z$, and $(t,\xi) \in \R^2$. By $T$-periodicity of $U_1$ and $U_2$ we have 
\begin{equation}
\label{stabunic18}
\big|U_2\big(t+kT,\xi+(c_1-c_2)(t+kT)\big)-U_1\big(t+kT,\xi+\xi_0\big)\big|=\big|U_2\big(t,\xi+(c_1-c_2)(t+kT)\big)-U_1\big(t,\xi+\xi_0\big)\big|.
\end{equation}
If $c_1 \neq c_2$, we pass to the limit when $k \to +\infty$. According to \eqref{stabunic8}, the first term of \eqref{stabunic18} converges to zero, whereas the second term converges to $1-U_1(t,\xi+\xi_0)>0$ if $c_1<c_2$, and to $U_1(t,\xi+\xi_0)>0$ if $c_1>c_2$. Consequently we have $c_1=c_2$, and \eqref{stabunic18} becomes 
$$\big|U_2\big(t+kT,\xi\big)-U_1\big(t+kT,\xi+\xi_0\big)\big|=\big|U_2\big(t,\xi\big)-U_1\big(t,\xi+\xi_0\big)\big|.$$
By passing to the limit when $k \to +\infty$, we get that $U_2(t,\xi)=U_1(t,\xi+\xi_0)$. Hence, the proof of the uniqueness result in Theorem \ref{stability} is complete.
\end{proof}
\section{Pulsating fronts for nonlinearities of small periods}
\label{sectionpetite}
In this section, we focus on the dependance on $T$ for the pulsating fronts solving
$$\partial_tu-\partial_{xx}u=f^{T}(t,u),~~\forall t \in \R,~~\forall x \in \R.$$
where $f^{T}(t,u)=f(\frac{t}{T},u)$, with $f(\cdot,u)$ a $1$-periodic function.\\ 
Let $\alpha \in [0,1]$. We denote $w_T(\alpha,\cdot)$ the solution of the Cauchy problem 
$$\begin{cases}
y'=f^{T}(t,y)~\text{on}~\R,\\
y(0)=\alpha.
\end{cases}$$
We remind the reader that the $\text{Poincar\'e}$ map associated with $f^{T}$ is the function $P_T:[0,1] \to [0,1]$ defined by 
$$P_T(\alpha)=w_T(\alpha,T).$$
\subsection{Existence and uniqueness}
This subsection is devoted to proving Theorem \ref{petiteex}. We are going to show that for $T>0$ small enough, the $\text{Poincar\'e}$ map associated with the function $f^{T}$ admits exactly one unstable fixed point which is strictly included between $0$ and $1$. Then, by \cite{ABC}, there exists a pulsating front solving \eqref{travel}. We begin by proving the existence of a fixed point of the $\text{Poincar\'e}$ map between $0$ and $1$.
\begin{lemma}
\label{pr1}
Let $T>0$.There are solutions of the problem 
\begin{equation}
\label{problem1}
\begin{cases}
y'=f^{T}(t,y)~~\text{on}~\R,\\
0<y<1~~\text{on}~\R,\\
y(0)=y(T).
\end{cases}
\end{equation}
\end{lemma}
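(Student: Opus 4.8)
The plan is to find a fixed point of the Poincaré map $P_T$ in the open interval $(0,1)$ by a topological degree / intermediate value argument, exploiting the fact that $0$ and $1$ are already fixed points and that they are stable (in the sense that the associated principal eigenvalues are positive). First I would recall from Section \ref{characterization}, applied with $c=0$, that $P_T(0)=0$, $P_T(1)=1$, and that since $\frac1T\int_0^T f^T_u(s,0)\,ds<0$ and $\frac1T\int_0^T f^T_u(s,1)\,ds<0$, the derivatives satisfy $P_T'(0)=e^{-T\lambda_{0,f^T}}<1$ and $P_T'(1)=e^{-T\lambda_{1,f^T}}<1$. Hence near $\alpha=0$ one has $P_T(\alpha)<\alpha$ for small $\alpha>0$, and near $\alpha=1$ one has $P_T(\alpha)>\alpha$ for $\alpha<1$ close to $1$. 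Set $\varphi(\alpha)=P_T(\alpha)-\alpha$, a continuous function on $[0,1]$ (continuity and even $\mathcal C^1$-regularity of $P_T$ follows from smooth dependence of ODE solutions on initial data, as used in the Poincaré-map computations above). Then $\varphi(\alpha)<0$ for small $\alpha>0$ and $\varphi(\alpha)>0$ for $\alpha$ near $1$, so by the intermediate value theorem there is $\alpha_T\in(0,1)$ with $\varphi(\alpha_T)=0$, i.e.\ $P_T(\alpha_T)=\alpha_T$.

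It then remains to check that the resulting solution $y=w_T(\alpha_T,\cdot)$ of $y'=f^T(t,y)$, $y(0)=\alpha_T$, genuinely satisfies $0<y<1$ on all of $\R$ and not merely $y(0)=y(T)$. Since $\alpha_T$ is a fixed point of $P_T$, the solution $w_T(\alpha_T,\cdot)$ is $T$-periodic (its value at time $T$ equals its value at time $0$, and uniqueness for the ODE propagates this periodicity to all of $\R$). For the pointwise bounds, I would invoke uniqueness of solutions to the scalar ODE together with the fact that the constants $0$ and $1$ are themselves solutions (by \eqref{01}): a solution starting at $\alpha_T\in(0,1)$ cannot cross either of the constant solutions $0$ or $1$, hence stays strictly between them for all time. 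This gives $0<y<1$ on $\R$, so $y$ solves \eqref{problem1}.

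The argument is essentially soft and I do not anticipate a serious obstacle; the one point requiring a little care is the strict sign information near the endpoints. One must be sure that the sign of $P_T'(0)-1$ and $P_T'(1)-1$ really does force $\varphi$ to change sign on $(0,1)$ — this is exactly where hypothesis \eqref{bismoyT} enters, via the explicit formula $P_T'(\alpha)=e^{-T\lambda_{w_T(\alpha,\cdot),f^T}}$ and $\lambda_{\theta,f^T}=-\frac1T\int_0^T f^T_u(s,\theta(s))\,ds$ established in Proposition \ref{enplus} and the proposition following Definition \ref{defPoin}. (If one prefers to avoid differentiating $P_T$, the same conclusion can be reached directly: for $\delta>0$ small the subsolution/supersolution barriers $\theta(t)\pm\varepsilon e^{-\lambda t/2}\Phi(t)$ built in the proof of Proposition \ref{equiv} show $w_T(\delta,\cdot)$ decreases below $\delta$ after one period and $w_T(1-\delta,\cdot)$ increases above $1-\delta$, giving the sign change of $\varphi$ without any regularity of $P_T$ beyond continuity.) Note that this lemma only produces \emph{some} interior fixed point; it does not yet address uniqueness of the intermediate fixed point, which will be handled separately when $T$ is taken small.
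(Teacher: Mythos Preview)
Your proposal is correct and follows essentially the same approach as the paper: define $\Phi_T(\alpha)=P_T(\alpha)-\alpha$, use the stability of $0$ and $1$ (i.e.\ $\Phi_T'(0)<0$ and $\Phi_T'(1)<0$) to force a sign change of $\Phi_T$ on $(0,1)$, and apply the intermediate value theorem. Your write-up is in fact more detailed than the paper's, which simply invokes Cauchy--Lipschitz for the conclusion that $w_T(\alpha_T,\cdot)$ solves \eqref{problem1}, whereas you spell out the $T$-periodicity and the strict bounds $0<y<1$ via non-crossing of the constant solutions.
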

\begin{proof}
We define $\Phi_T(\alpha)=P_T(\alpha)-\alpha$. By hypotheses, $0$ and $1$ are stable fixed points of $P_T$. So, $$\begin{cases}\Phi_T(0)=0,\\ \Phi_T'(0)<0, \end{cases}~\text{and}~\begin{cases}\Phi_T(1)=0,\\ \Phi_T'(1)<0.\end{cases}$$\\
By continuity, there exists $\alpha_T \in (0,1)$ such that $\Phi_T(\alpha_T)=0$, that is $P_T(\alpha_T)=\alpha_T$. So, the function $w_T(\alpha_T,\cdot)$ satisfies Problem \eqref{problem1} as a consequence of the Cauchy-Lipschitz theorem.
\end{proof}
\noindent
Let $t \mapsto \theta_T(t)$ be a solution of Problem \eqref{problem1}. We remind that $\theta_T(0)$ is a fixed point of the $\text{Poincar\'e}$ map associated with $f^{T}$. It is unstable if $P_T'(\theta(0))>1$. We saw in Section \ref{characterization} that this condition is equivalent to the fact that the principal eigenvalue associated with $f^{T}$ and $\theta_T$ is negative. We are thus interested in the sign of 
\begin{equation}
\label{interversion}
\lambda_{\theta_T,f^{T}}=-\frac{1}{T}\int_0^{T} f_u^{T}(s,\theta_T(s)) ds = -\int_0^1 f_u(t,\theta_T(tT))dt.
\end{equation}
We consider a sequence  of positive real numbers $(T_n)_n$ such that $T_n \xrightarrow{n\to+\infty}0$.
\begin{lemma}
\label{convunif}
The sequence $(\theta_{T_n})_n$ converges up to extraction of a subsequence uniformly on $\R$ to a constant function $\theta_0$.
\end{lemma}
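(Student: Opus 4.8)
The plan relies only on soft arguments: I would not use the bistable structure of $g$ at all here, just the uniform bound on $f$ together with the fact that the period $T_n$ shrinks to $0$. The point is that a $T_n$-periodic solution of an ODE with bounded right-hand side has oscillation $O(T_n)$, hence is uniformly close to a constant, and the constant can be made to converge by compactness of $[0,1]$.

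First I would record that $f$ is bounded: since $f$ is continuous on $\R\times[0,1]$, $1$-periodic in its first variable, and $[0,1]$ is compact, there is $M>0$ with $|f(s,u)|\le M$ for all $(s,u)\in\R\times[0,1]$. Because $\theta_{T_n}$ solves $\theta_{T_n}'(t)=f^{T_n}(t,\theta_{T_n}(t))=f(t/T_n,\theta_{T_n}(t))$ and takes values in $(0,1)$, we get $|\theta_{T_n}'(t)|\le M$ for all $t\in\R$; in particular each $\theta_{T_n}$ is $M$-Lipschitz on $\R$. I would also note that, by uniqueness in the Cauchy--Lipschitz theorem (applied to $y'=f^{T_n}(t,y)$, whose right-hand side is $T_n$-periodic in $t$) together with the relation $\theta_{T_n}(0)=\theta_{T_n}(T_n)$, the function $\theta_{T_n}$ is in fact $T_n$-periodic on all of $\R$.

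Next I would exploit this periodicity: given $t\in\R$, choose $s\in[0,T_n]$ with $t-s\in T_n\Z$, so that $\theta_{T_n}(t)=\theta_{T_n}(s)$ and therefore $|\theta_{T_n}(t)-\theta_{T_n}(0)|=|\theta_{T_n}(s)-\theta_{T_n}(0)|\le Ms\le MT_n$. This yields
$$\sup_{t\in\R}\big|\theta_{T_n}(t)-\theta_{T_n}(0)\big|\le MT_n\xrightarrow{n\to+\infty}0.$$
Since $(\theta_{T_n}(0))_n$ lies in the compact interval $[0,1]$, it has a subsequence $(\theta_{T_{n_k}}(0))_k$ converging to some $\theta_0\in[0,1]$. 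Along this subsequence,
$$\big\|\theta_{T_{n_k}}-\theta_0\big\|_{L^\infty(\R)}\le \big\|\theta_{T_{n_k}}-\theta_{T_{n_k}}(0)\big\|_{L^\infty(\R)}+\big|\theta_{T_{n_k}}(0)-\theta_0\big|\xrightarrow{k\to+\infty}0,$$
which is precisely uniform convergence on $\R$ to the constant function $\theta_0$.

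There is no genuine obstacle in this argument; the only subtlety worth flagging is that the convergence is uniform on the whole line rather than merely on compact sets (as a bare Arzel\`a--Ascoli argument would give) — this is exactly what the shrinking period buys us, and it is what will be needed in the next step to pass to the limit in the expression \eqref{interversion} for the principal eigenvalue.
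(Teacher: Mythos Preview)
Your proof is correct and follows essentially the same approach as the paper's own proof: bound $|\theta_{T_n}'|$ uniformly via the boundedness of $f$, use the $T_n$-periodicity to control the oscillation of $\theta_{T_n}$ by $MT_n$, extract a convergent subsequence of the values $\theta_{T_n}(0)\in[0,1]$, and conclude by the triangle inequality. Your presentation is in fact slightly cleaner in that you make explicit why $\theta_{T_n}$ is $T_n$-periodic on all of $\R$ (via Cauchy--Lipschitz uniqueness), whereas the paper uses this implicitly.
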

\begin{proof}
Let $K$ be a positive constant such that 
$$|\theta_{T_n}'(t)| \leq K,~~\forall t \in \R,~~\forall n\in \N.$$
Let $t \in \R$ and $n\in\N$. There exists an integer $k_n$ such that $t \in [k_nT_n,(k_n+1)T_n)$. As $\theta_n(0)\in (0,1)$, there exists a real number $\theta_0$ such that up to extraction of a subsequence 
$$\theta_{T_n}(0)=\theta_{T_n}(k_nT_n) \xrightarrow{n \to \infty} \theta_0.$$
Yet, by the mean value theorem, we get that 
$$|\theta_{T_n}(t)-\theta_{T_n}(k_nT_n)| \leq K|t-k_nT_n| \leq KT_n,~~\forall t \in \R,~~\forall n\in \N.$$
Since $T_n \xrightarrow{n \to \infty}0$, for all $t\in \R$, we have 
$$|\theta_{T_n}(t)-\theta_0| \leq  |\theta_{T_n}(t)-\theta_{T_n}(k_nT_n)|+  |\theta_0-\theta_{T_n}(k_nT_n)| \leq KT_n +|\theta_0-\theta_{T_n}(k_nT_n)|\xrightarrow{n \to \infty} 0,$$
and the proof of Lemma \ref{convunif} is complete.
\end{proof}
\begin{lemma}
\label{signjohn}
Up to extraction of a subsequence, we have 
$$\lambda_{\theta_{T_n},f^{T_n}} \xrightarrow{n \to \infty} -\int_0^1 f_u(s,\theta_0)ds.$$
\end{lemma}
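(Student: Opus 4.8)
The plan is to pass to the limit directly in the integral representation \eqref{interversion} of the principal eigenvalue. Recall that
$$\lambda_{\theta_{T_n},f^{T_n}} = -\int_0^1 f_u\big(t,\theta_{T_n}(tT_n)\big)\,dt,$$
so the whole statement reduces to controlling the integrand as $n \to \infty$ and interchanging limit and integral on the fixed interval $[0,1]$.

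First I would work along the subsequence provided by Lemma \ref{convunif}, along which $\theta_{T_n}$ converges uniformly on $\R$ to the constant function $\theta_0 \in [0,1]$. Since $tT_n$ stays in the bounded set $[0,\sup_n T_n]\subset \R$ when $t\in[0,1]$, one has
$$\sup_{t\in[0,1]}\big|\theta_{T_n}(tT_n)-\theta_0\big| \leq \big\|\theta_{T_n}-\theta_0\big\|_{L^\infty(\R)} \xrightarrow{n\to\infty} 0.$$
Next I would invoke the regularity of $f$: the function $f_u$ is continuous on $\R\times[0,1]$ and $1$-periodic in $t$, hence uniformly continuous on the compact set $[0,1]\times[0,1]$. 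Combined with the uniform convergence of $\theta_{T_n}(tT_n)$ to $\theta_0$, this yields
$$\sup_{t\in[0,1]}\big|f_u\big(t,\theta_{T_n}(tT_n)\big)-f_u(t,\theta_0)\big| \xrightarrow{n\to\infty} 0.$$
Integrating over $t\in[0,1]$ (a set of measure one), I conclude that $\int_0^1 f_u(t,\theta_{T_n}(tT_n))\,dt \to \int_0^1 f_u(s,\theta_0)\,ds$, and the claimed limit follows after the sign change.

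There is no serious obstacle here. The only points that require a little care are (i) keeping track that the extraction is the \emph{same} subsequence as in Lemma \ref{convunif}, so that no further diagonalization is needed; and (ii) using that the convergence $\theta_{T_n}\to\theta_0$ holds uniformly on all of $\R$ rather than merely pointwise, since it is precisely this that makes the composition with the rescaling $t\mapsto tT_n$ harmless and lets uniform convergence of the integrand on $[0,1]$ take over.
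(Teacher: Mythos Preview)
Your proof is correct and follows essentially the same approach as the paper: both arguments use the subsequence from Lemma \ref{convunif} to obtain $\theta_{T_n}(tT_n)\to\theta_0$ uniformly in $t$, and then pass the limit inside the integral \eqref{interversion}. Your version is in fact more explicit about the justification, spelling out the uniform continuity of $f_u$ on $[0,1]\times[0,1]$ where the paper simply asserts that the limit can be moved inside the integral.
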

\begin{proof}
Let us note that $(\theta_{T_n}(\cdot T_n))_n$ converges up to extraction of a subsequence uniformly on $\R$ to $\theta_0$. Indeed, for any $\epsilon>0$, there exists $n_0\in\N$ such that 
$$|\theta_{T_n}(s)-\theta_0|\leq \epsilon,~~\forall n\geq n_0, ~~\forall s \in \R.$$
So,
$$|\theta_{T_n}(tT_n)-\theta_0|\leq \epsilon,~~\forall n\geq n_0,~~\forall t \in \R.$$
We can then move the limit inside the integral \eqref{interversion}.
\end{proof}
\noindent
Consequently, for $n$ large enough (ie $T_n$ small enough), $\lambda_{\theta_n,f^{T_n}}$ has the same sign as 
$$h(\theta_0):=-\int_0^1 f_u(s,\theta_0)ds.$$
The function $h$ satisfies $h=-g'$, where $g$ is the function defined in \eqref{fonctiong}.
\begin{lemma}
The real number $\theta_0$ satisfies 
$$g(\theta_0)=0.$$
\end{lemma}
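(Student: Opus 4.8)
The plan is to integrate the equation satisfied by $\theta_{T_n}$ over a single period, rescale time so that the period becomes $[0,1]$, and then let $n\to\infty$ using the uniform convergence already obtained. This is exactly the same device as in \eqref{interversion}, but applied now to the quantity $\int_0^{T_n} f^{T_n}(s,\theta_{T_n}(s))\,ds$ rather than to the principal eigenvalue.

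Since $\theta_{T_n}$ solves $y'=f^{T_n}(t,y)$ and satisfies $\theta_{T_n}(0)=\theta_{T_n}(T_n)$, integrating the differential equation between $0$ and $T_n$ gives
$$0=\theta_{T_n}(T_n)-\theta_{T_n}(0)=\int_0^{T_n}f^{T_n}\big(s,\theta_{T_n}(s)\big)\,ds=\int_0^{T_n}f\Big(\frac{s}{T_n},\theta_{T_n}(s)\Big)\,ds.$$
With the change of variables $s=T_n t$ this reads
$$\int_0^1 f\big(t,\theta_{T_n}(tT_n)\big)\,dt=0,\qquad\text{for every }n.$$

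It then remains to pass to the limit. As noted in the proof of Lemma \ref{signjohn}, the uniform convergence of $\theta_{T_n}$ to $\theta_0$ on $\R$ (Lemma \ref{convunif}) implies that $\theta_{T_n}(\cdot\,T_n)$ also converges uniformly on $\R$ to the constant $\theta_0$. Since $f$ is continuous on the compact set $[0,1]\times[0,1]$, the map $t\mapsto f(t,\theta_{T_n}(tT_n))$ converges uniformly on $[0,1]$ to $t\mapsto f(t,\theta_0)$, so we may interchange limit and integral to obtain
$$g(\theta_0)=\int_0^1 f(t,\theta_0)\,dt=\lim_{n\to\infty}\int_0^1 f\big(t,\theta_{T_n}(tT_n)\big)\,dt=0,$$
which is the assertion. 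There is no real obstacle here; the only point to watch is the interchange of limit and integral, which is justified by the uniform convergence of $\theta_{T_n}(\cdot\,T_n)$ together with the (uniform) continuity of $f$ on $[0,1]\times[0,1]$.
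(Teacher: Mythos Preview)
Your proof is correct and follows essentially the same approach as the paper: integrate the ODE over one period, use periodicity to see the integral vanishes, rescale to $[0,1]$, and pass to the limit using the uniform convergence of $\theta_{T_n}(\cdot\,T_n)$ to $\theta_0$. Your justification of the limit passage is in fact slightly more explicit than the paper's.
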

\begin{proof}
We have $$\theta_{T_n}'(t)=f^{T_n}(t,\theta_{T_n}(t)),~~\forall n \in \N,~~\forall t \in \R.$$
We integrate this equation between $0$ and $T_n$ 
$$\theta_{T_n}(T_n)-\theta_n(0)=\int_0^{T_n} f^{T_n}(s,\theta_{T_n}(s))ds,~~\forall n \in \N.$$
Consequently $$\int_0^{T_n} f^{T_n}(s,\theta_{T_n}(s))ds=0,~~\forall n \in \N.$$
By changing the variable $t=\frac{s}{T_n}$, we get that $$\int_0^1 f(t,\theta_{T_n}(tT_n))dt=0,~~\forall n \in \N.$$
Finally, we pass to the limit and we obtain $\int_0^1 f(t,\theta_0)dt=0.$
\end{proof}
\noindent
So $g(\theta_0)=0$. Consequently, $\theta_0$ is equal to $0$, $\theta_g$ or $1$ . We are now going to justify that $\theta_0$ is different of $0$ et $1$.
\begin{lemma}
We have the following equality 
$$\theta_0=\theta_g.$$
\end{lemma}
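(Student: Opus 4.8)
The plan is to exclude the two remaining possibilities $\theta_0=0$ and $\theta_0=1$. Indeed the previous lemma already gives $g(\theta_0)=0$, hence, as noted there, $\theta_0\in\{0,\theta_g,1\}$, so ruling out $0$ and $1$ forces $\theta_0=\theta_g$. The two exclusions are symmetric, so I will treat $\theta_0=0$ in detail and only indicate the modification for $\theta_0=1$. The key idea is to exploit the degeneracy $f(t,0)=0$ (resp. $f(t,1)=0$) to factor the nonlinearity and then integrate the equation for $\theta_{T_n}$ over one period, exactly in the spirit of the computations already used above.

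Assume $\theta_0=0$. For $u\in(0,1]$ write $\frac{f(t,u)}{u}=\int_0^1 f_u(t,su)\,ds$; the right-hand side extends continuously to $u=0$ with value $f_u(t,0)$, so the map $(t,u)\mapsto f(t,u)/u$ (so extended) is continuous on $[0,1]\times[0,1]$. Since $0<\theta_{T_n}<1$ on $\R$ by Problem \eqref{problem1}, one may divide $\theta_{T_n}'=f\big(\tfrac{t}{T_n},\theta_{T_n}\big)$ by $\theta_{T_n}$ and integrate over $[0,T_n]$: a primitive of $\theta_{T_n}'/\theta_{T_n}$ is $\log\theta_{T_n}$, which is $T_n$-periodic, so its integral over a period vanishes, and after the change of variables $s=T_nt$ this reads
\begin{equation*}
\int_0^1 \frac{f\big(t,\theta_{T_n}(T_nt)\big)}{\theta_{T_n}(T_nt)}\,dt=0,\qquad \forall n\in\N.
\end{equation*}
By the uniform convergence $\theta_{T_n}(\cdot\,T_n)\to\theta_0=0$ on $\R$ already established in the proof of Lemma \ref{signjohn}, together with the continuity of the extended quotient, the integrand converges uniformly on $[0,1]$ to $f_u(t,0)$; letting $n\to\infty$ gives $\int_0^1 f_u(t,0)\,dt=0$, that is $g'(0)=0$, which contradicts $g'(0)<0$.

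For $\theta_0=1$ one proceeds identically after writing $f(t,u)=(u-1)\int_0^1 f_u\big(t,1+s(u-1)\big)\,ds$: dividing the equation for $\theta_{T_n}$ by $\theta_{T_n}-1<0$ (equivalently, differentiating $\log(1-\theta_{T_n})$, which is $T_n$-periodic), integrating over $[0,T_n]$, changing variables and passing to the limit using $\theta_{T_n}(\cdot\,T_n)\to 1$ uniformly yields $\int_0^1 f_u(t,1)\,dt=0$, i.e. $g'(1)=0$, contradicting $g'(1)<0$. Hence $\theta_0\neq 0$ and $\theta_0\neq 1$, so $\theta_0=\theta_g$.

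I do not expect a genuine obstacle: once the factorization is in place the argument is a short limiting argument, and the only points needing a little care are the strict inequalities $0<\theta_{T_n}<1$ (so the logarithms, equivalently the divisions, are legitimate), the joint continuity of the extended difference quotients near the two endpoints, and the fact that the required uniform convergence $\theta_{T_n}(\cdot\,T_n)\to\theta_0$ is already available from the proof of Lemma \ref{signjohn}. Alternatively, one could argue that for $T$ small the $\text{Poincar\'e}$ map $P_T$ has no fixed point in a fixed neighborhood of $0$ or of $1$, again because of the degeneracy of $f$ at the endpoints together with $g'(0),g'(1)<0$; this amounts to essentially the same computation phrased on the time-$T$ map.
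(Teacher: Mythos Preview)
Your proof is correct and follows essentially the same approach as the paper: argue by contradiction, divide the ODE for $\theta_{T_n}$ by $\theta_{T_n}$ (respectively $\theta_{T_n}-1$), use periodicity to kill the left-hand side, change variables, and pass to the limit to obtain $\int_0^1 f_u(t,0)\,dt=0$ (respectively $\int_0^1 f_u(t,1)\,dt=0$), contradicting \eqref{bismoyT}. The only cosmetic difference is that you write the quotient via the integral form $\int_0^1 f_u(t,su)\,ds$ while the paper writes it as the difference quotient $\frac{f(t,U_n)-f(t,0)}{U_n-0}$; these are of course the same object.
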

\begin{proof}
Let us begin by showing that $\theta_0$ is different from $0$. We argue bwoc, supposing that $\theta_0=0$. So $U_n(t):=\theta_{T_n}(tT_n)$ converges uniformly to the null function on $\R$ and we have $U_n'=T_nf(t,U_n)$, on $\R$. We divide this equation by $T_nU_n$, then we integrate between $0$ and $1$ 
$$\frac{1}{T_n}\int_0^1 \frac{U_n'(t)}{U_n(t)}dt=\int_0^1 \frac{f(t,U_n(t))}{U_n(t)}dt,~~\forall n \in \N.$$ So 
$$\frac{1}{T_n}[\log(U_n(1))-\log(U_n(0))]=\int_0^1 \frac{f(t,U_n(t))-f(t,0)}{U_n(t)-0}dt,~~\forall n \in \N.$$ Consequently 
$$\int_0^1 \frac{f(t,U_n(t))-f(t,0)}{U_n(t)-0}dt=0,~~\forall n \in \N.$$ We pass to the limit when $n$ converges to infinity 
$$\int_0^1 f_u(t,0)dt=0.$$ which contradicts the hypothesis \eqref{bismoyT}. Consequently $\theta_0\neq 0$. To show that $\theta_0$ is different from $1$, the proof is similar but one has to divide by $T_n ( U_n-1)$ instead of $T_nU_n$.
\end{proof}
\noindent
The previous lemma implies that $\theta_g$ is the unique accumulation point of the sequence $(\theta_{T_n})_n$. Consequently, the convergences in Lemma \ref{convunif} and in Lemma \ref{signjohn} are not up to extraction of a subsequence, and we have
$$\lim\limits_{T_n\to0^+}\lambda_{\theta_{T_n},f^{T_n}}=-g'(\theta_g).$$
Actually, we have even
\begin{equation}
\label{Tpetit}
\lim\limits_{T\to0^+}\lambda_{\theta_{T},f^{T}}=-g'(\theta_g).
\end{equation}
 According to \eqref{sdgtz} and \eqref{Tpetit}, we can define $T_f>0$ such that $\lambda_{\theta_T,f^T}<0$ for all $T\in(0,T_f)$ and for all $\theta_T$ solving \eqref{problem1}. In other words, for all $T\in (0,T_f)$, if a the function $t \mapsto \theta_T(t)$ solves \eqref{problem1}, then it is an unstable equilibrium state and $\theta_T(0)$ is an unstable fixed point of the $\text{Poincar\'e}$ map associated with $f^{T}$. To finish the proof of Theorem \ref{petiteex}, we are going to show the uniqueness of the fixed point in $(0,1),$ for $T\in (0,T_f)$.
\begin{lemma}
Let $\theta_T$ and $\Psi_T$ be two solutions of Problem \eqref{problem1}. For all $T\in (0,T_f)$, we have 
 $$\theta_T(0)=\Psi_T(0)~(\text{ie}~\theta_T \equiv \Psi_T).$$
\end{lemma}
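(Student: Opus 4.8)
The plan is to reduce the statement to the fact that, for $T\in(0,T_f)$, the associated Poincar\'e map $P_T$ has exactly one fixed point in $(0,1)$. Indeed, a fixed point $\alpha\in(0,1)$ of $P_T$ produces the solution $w_T(\alpha,\cdot)$ of Problem \eqref{problem1}, and conversely every solution of \eqref{problem1} is of this form with $\alpha=\theta_T(0)$; so once uniqueness of the interior fixed point is known we get $\theta_T(0)=\Psi_T(0)$, and the Cauchy--Lipschitz theorem yields $\theta_T\equiv\Psi_T$. Keeping the notation $\Phi_T(\alpha)=P_T(\alpha)-\alpha$ from Lemma \ref{pr1}, I would first record the three ingredients already at hand: $\Phi_T$ is of class $\mathcal{C}^1$ on $[0,1]$ with $\Phi_T(0)=\Phi_T(1)=0$; since $0$ and $1$ are stable fixed points of $P_T$ one has $\Phi_T'(0)<0$ and $\Phi_T'(1)<0$; and — this is the point established through the homogenization limit \eqref{Tpetit} — for $T\in(0,T_f)$ every solution $\theta_T$ of \eqref{problem1} is an unstable equilibrium, i.e. $\lambda_{\theta_T,f^T}<0$, equivalently $P_T'\big(\theta_T(0)\big)=e^{-T\lambda_{\theta_T,f^T}}>1$. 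In other words, $\Phi_T'(\alpha)>0$ at every zero $\alpha\in(0,1)$ of $\Phi_T$.

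Next I would argue that the zero set $Z=\{\alpha\in[0,1]:\Phi_T(\alpha)=0\}$ is finite. It is closed, hence compact; and each of its points is isolated in $Z$, because $\Phi_T'$ does not vanish there — at an interior zero since $\Phi_T'>0$, and at the endpoints $0$ and $1$ since $\Phi_T'<0$ (an accumulation of zeros at $0$ or at $1$ would force $\Phi_T'$ to vanish there). A compact discrete set is finite, so I can write $Z=\{\,0=\gamma_0<\gamma_1<\cdots<\gamma_k<\gamma_{k+1}=1\,\}$, with $k\geq1$ by Lemma \ref{pr1}.

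The final step is a sign count. On each interval $(\gamma_i,\gamma_{i+1})$ the function $\Phi_T$ has no zero, hence keeps a constant sign; from $\Phi_T'(0)<0$ this sign is negative on $(\gamma_0,\gamma_1)$, and from $\Phi_T'(1)<0$ it is positive on $(\gamma_k,\gamma_{k+1})$. If $k\geq2$, then $\gamma_1,\gamma_2\in(0,1)$, so $\Phi_T'(\gamma_1)>0$ forces $\Phi_T>0$ just to the right of $\gamma_1$ — hence on all of $(\gamma_1,\gamma_2)$ — while $\Phi_T'(\gamma_2)>0$ forces $\Phi_T<0$ just to the left of $\gamma_2$ — hence on all of $(\gamma_1,\gamma_2)$ — a contradiction. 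Therefore $k=1$, and the lemma follows.

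As for the main difficulty: the substantive work, namely showing that every interior equilibrium is unstable for $T$ small, has already been carried out via the limit $T\to0^+$, so what remains is essentially bookkeeping. The only point that requires a little care is the finiteness of $Z$, which is why I would insist on the non-degeneracy $\Phi_T'\neq0$ at \emph{every} point of $Z$, endpoints included, and not merely at the interior zeros.
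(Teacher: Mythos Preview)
Your proof is correct and follows essentially the same route as the paper: both reduce to the Poincar\'e map via $\Phi_T(\alpha)=P_T(\alpha)-\alpha$ and exploit the key fact that $\Phi_T'>0$ at every interior zero for $T\in(0,T_f)$. The paper's version is slightly more compressed --- it simply observes that if $\theta_T(0)\neq\Psi_T(0)$ are two zeros with $\Phi_T'>0$, then between them there must lie a third zero $\alpha_T$ with $\Phi_T'(\alpha_T)\leq 0$, contradicting instability --- whereas you take the extra step of proving finiteness of the zero set and then running a sign-alternation count; your version is a touch more explicit but not genuinely different.
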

\begin{proof}
Let $T\in(0,T_f)$. We define $\Phi_T(\alpha)=P_T(\alpha)-\alpha$. Let us suppose bwoc that $\theta_T(0) \neq \Psi_T(0)$.We saw that 
$$\begin{cases}
\Phi_T(\theta_T(0))=0,\\
\Phi_T'(\theta_T(0))>0,
\end{cases}
 ~\text{and}~~
 \begin{cases}
 \Phi_T(\Psi_T(0))=0,\\
  \Phi_T'(\Psi_T(0))>0.
 \end{cases}$$
Necessarily, there exists $\alpha_T$ between $\theta_T(0)$ and $\Psi_T(0)$ such that $\Phi_T(\alpha_T)=0~\text{and}~\Phi'_T(\alpha_T)\leq0$. Consequently, $P_T(\alpha_T)=\alpha_T~\text{and}~P'_T(\alpha_T)\leq1$. This contradicts the fact that all fixed points of the $\text{Poincar\'e}$ map in $(0,1)$ are unstable. 
\end{proof}\noindent
To summarize, for all $T\in(0,T_f)$, the $\text{Poincar\'e}$ map associated with $f^T$ has a unique fixed point $\theta_T(0)$ between $0$ and $1$, where $\theta_T$ is the unique solution of Problem \eqref{problem1}. Furthermore, $\theta_T(0)\xrightarrow{T\to0}\theta_g$. 
\subsection{Convergence of the couple $(c_T,U_T)$ as $T \to 0$.}
We here prove Theorem \ref{petiteconv}. Let $T\in(0,T_f)$ and $M>0$. In \cite{ABC}, the couple $(c_T, U_T)$ is built as limit when $M$ tends to infinity of the couple $(c^M_T, U^M_T)$ solving 
$$\begin{cases}
(U^M_T)_t-c^M_T(U^M_T)_{\xi}-(U^M_T)_{\xi \xi}-f^{T}(t,U^M_T)=0,~~~~\text{on}~\R \times (-M,M), \\
U^M_T(t,-M)=1,~~U^M_T(t,+M)=0,~~~~\forall t \in \R,\\
U^M_T(T,\xi)=U^M_T(0,\xi),~~~\forall \xi \in [-M,M], \\
U^M_T(0,0)=\theta_g.
\end{cases}$$
We give a lemma which comes again from the article of Alikakos, Bates and Chen \cite{ABC} which is going to serve us to bound the speeds $c_T$.
\begin{lemma}{\cite{ABC}}
\label{lemABC}
Let $M>1$ be a fixed constant. If $(\tilde{U},\tilde{c})$ satisfies \\
$$\begin{cases}
\tilde{U}_t-\tilde{c} \tilde{U}_{\xi}-\tilde{U}_{\xi \xi}-f^{T}(t,\tilde{U})\leq0,~~~~\text{on}~\R \times (-M,M), \\
\tilde{U}(t,-M)\leq 1,~~\tilde{U}(t,+M)\leq 0,~~~~\forall t \in \R,\\
\tilde{U}(T,\xi)\leq\tilde{U}(0,\xi),~~~~\forall \xi \in [-M,M], \\
\tilde{U}(0,0)\geq \theta_g,
\end{cases}$$
then, we have $$c_T^M\leq\tilde{c}.$$
\end{lemma}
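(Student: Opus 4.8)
We argue by contradiction: suppose $\tilde c<c_T^M$ and set $s:=(c_T^M-\tilde c)T>0$. The idea is to recast the exact truncated solution $U_T^M$ in the frame moving at the slower speed $\tilde c$, and then to slide it against $\tilde U$. Recall from \cite{ABC} that $U_T^M$ is $T$-periodic in $t$, strictly decreasing in $\xi$ on $(-M,M)$, and satisfies $U_T^M(t,-M)=1$, $U_T^M(t,+M)=0$, $U_T^M(0,0)=\theta_g$. Put $\hat U(t,\eta):=U_T^M\bigl(t,\eta-(c_T^M-\tilde c)t\bigr)$. A direct computation using the equation for $U_T^M$ shows that $\hat U$ solves $\hat U_t-\tilde c\hat U_\eta-\hat U_{\eta\eta}=f^T(t,\hat U)$ on the rightward drifting strip $S_t:=\bigl(-M+(c_T^M-\tilde c)t,\ M+(c_T^M-\tilde c)t\bigr)$, equals $1$ on its left moving boundary and $0$ on its right one, remains strictly decreasing in $\eta$, and, by the $T$-periodicity of $U_T^M$, satisfies the key relation $\hat U(t+T,\eta)=\hat U(t,\eta-s)$: after one period the whole profile has been translated by $s>0$ to the right. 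Extending $\hat U$ by the value $1$ to the left of $S_t$ and by $0$ to its right, we obtain a function which is a generalized supersolution of the $\tilde c$-equation across its left corner; we will only use it as a one-sided (upper) barrier on the region where it stays below $1$.

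Next, for each $N\in\N$ we slide this barrier against $\tilde U$ on the window $[-NT,0]\times[-M,M]$. Since $\tilde U\le1$ and $\tilde U(t,M)\le0$, for $\tau$ sufficiently negative one has $\tilde U(t,\xi)\le\hat U(t,\xi+\tau)$ on that window (for such $\tau$ and $t\le0$ the shifted strip lies entirely to the right of $\{|\xi|\le M\}$, so the barrier equals $1$ there). Let $\tau_N^\ast$ be the supremum of the $\tau$'s for which this inequality holds on $[-NT,0]\times[-M,M]$; it is finite because for $\tau$ large positive and $\xi$ slightly below $M$ the barrier has already dropped to $0$ while $\tilde U(0,\xi)$ need not be nonpositive. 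The heart of the proof is to show, as in \cite{ABC}, that $\tau_N^\ast\ge0$: were $\tau_N^\ast<0$, at $\tau=\tau_N^\ast$ there would be a contact point in the open window or on a lateral boundary of $S_t$, and a sliding/strong-maximum-principle argument — using that $\hat U$ is an exact solution of the $\tilde c$-equation where it lies strictly between $0$ and $1$, that $\tilde U$ is a subsolution of the same equation, that the required orderings hold on $\{t=-NT\}$ and on the moving boundaries of $S_t$, and that the corner of the extended barrier has the favourable convexity — would force $\tilde U(t,\xi)\equiv\hat U(t,\xi+\tau_N^\ast)$ on a backward component of that contact point; propagating this identity backward in time via the shift relation $\hat U(t+T,\eta)=\hat U(t,\eta-s)$ then contradicts $\tilde U(T,\cdot)\le\tilde U(0,\cdot)$, since the profile of $\hat U$ escapes to $+\infty$ while the values of $\tilde U$ on $[-M,M]$ cannot increase. (A contact at the left corner would force $\tilde U$ to attain the interior maximum $1$, hence $\tilde U\equiv1$, contradicting $\tilde U(t,M)\le0$.)

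Letting $N\to\infty$, we obtain $\tilde U(t,\xi)\le\hat U(t,\xi)$ on $(-\infty,0]\times(-M,M)$, so in particular $\tilde U(0,0)\le\hat U(0,0)=U_T^M(0,0)=\theta_g$. Together with the hypothesis $\tilde U(0,0)\ge\theta_g$ this gives $\tilde U(0,0)=\theta_g=\hat U(0,0)$; the strong maximum principle applied to the nonnegative supersolution $\hat U-\tilde U$ with an interior zero at $(0,0)$, combined once more with the shift relation and with $\tilde U(t,M)\le0$, again contradicts $\tilde U(T,\cdot)\le\tilde U(0,\cdot)$ (equivalently, if $\tilde U$ were a genuine $\tilde c$-solution equal to a translate of $U_T^M$, subtracting the two equations and using $\partial_\xi U_T^M<0$ would force $\tilde c=c_T^M$). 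Hence $\tilde c\ge c_T^M$, which is the claim. The main obstacle is the second paragraph: carrying out the sliding rigorously on a strip whose lateral boundary moves, keeping track of the corner of the extended barrier, and extracting the contact point with the right compactness (periodicity up to the spatial shift $s$); this is precisely the content of the corresponding lemma of Alikakos, Bates and Chen, whose argument we follow.
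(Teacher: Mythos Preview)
The paper does not prove this lemma: it is stated with the citation \cite{ABC} and used as a black box, so there is no ``paper's own proof'' to compare against. Your outline follows the sliding/comparison strategy that underlies the result in Alikakos--Bates--Chen, and the overall architecture (recast $U_T^M$ in the $\tilde c$-frame, slide against the subsolution $\tilde U$, reach a contradiction via the strong maximum principle and the periodicity-with-shift relation) is the right one.

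That said, what you have written is a proof sketch rather than a proof, and you acknowledge this yourself in the final sentence. The genuine work---handling the moving lateral boundary of the drifting strip, justifying that the extended barrier is a supersolution across the corner, and extracting a contact point with the correct compactness over increasingly long time windows---is precisely the nontrivial content of the ABC lemma, and you defer it back to \cite{ABC}. So your write-up is circular as a standalone proof: it invokes the very lemma it purports to establish. If the goal is merely to indicate why the result is plausible and to point to the reference, this is fine; if the goal is an independent proof, the second paragraph needs to be carried out in full.
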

\noindent
As $f$ is of class $\mathcal{C}^1(\R\times\R,\R)$, the function $f_u$ is bounded on $[0,1]\times [0,1]$.
Consequently, there exists a function $\overline{f}$ KPP and a function $\underline{f}$ the opposite of which is KPP such that 
\begin{equation*}
\underline{f}(u)\leq f(t,u) \leq \overline{f}(u),~~\forall (t,u)\in [0,1]\times[0,1].
\end{equation*}
There are planar fronts $(\underline{c},\underline{U})$ and $(\overline{c},\overline{U})$ solving 
$$\begin{cases} 
\underline{U}''+\underline{c}~\underline{U}'+\underline{f}(\underline{U})=0~~~\text{on}~\R,\\
\underline{U}(-\infty)=1,~~\underline{U}(+\infty)=0,\\
\underline{U}(0)=\theta_g,
\end{cases}~~\text{and}~~
\begin{cases} 
\overline{U}''+\overline{c}~\overline{U}'+\overline{f}(\overline{U})=0~~~\text{on}~\R,\\
\overline{U}(-\infty)=1,~~\overline{U}(+\infty)=0,\\
\overline{U}(0)=\theta_g.
\end{cases}$$
We can again obtain these couples from the limits when $M$ tends to infinity of the couples $(\underline{c}^M,\underline{U}^M)$ and $(\overline{c}^M,\overline{U}^M)$ solving 
$$ \begin{cases} 
(\underline{U}^{M}){''} +\underline{c}^M~(\underline{U}^{M}){'}+\underline{f}
(\underline{U}^M)=0~\text{on}~(-M,M),\\
\underline{U}^M(-M)=1,~~\underline{U}^M(+M)=0,\\
\underline{U}^M(0)=\theta_g,
\end{cases}$$
$$\begin{cases} 
(\overline{U}^{M}){''}+\overline{c}^M~(\overline{U}^{M}){'}+\overline{f}(\overline{U}^M)=0~\text{on}~(-M,M),\\
\overline{U}^M(-M)=1,~~\overline{U}^M(+M)=0,\\
\overline{U}^M(0)=\theta_g.
\end{cases}$$
For the detail of this construction, we can refer to the article of Berestycki and Chapuisat  \cite{BC}. The next proposition supplies bounds for the speed  $c_T$.
\begin{proposition}
Let $T\in(0,T_f)$. The speed $c_T$ satisfies 
\begin{equation*}
\overline{c}\leq c_T \leq \underline{c}.
\end{equation*}
\end{proposition}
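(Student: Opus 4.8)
The plan is to obtain each of the two inequalities by a comparison carried out on the truncated domains $(-M,M)$, for $M>1$ fixed, by means of Lemma \ref{lemABC}, and then to let $M\to+\infty$. Recall that for $T\in(0,T_f)$ the front $(c_T,U_T)$ exists and is the limit, as $M\to+\infty$, of the solutions $(c_T^M,U_T^M)$ of the truncated problem, and that likewise $\underline c^M\to\underline c$ and $\overline c^M\to\overline c$ (these convergences being the ones recalled above, see \cite{ABC} and \cite{BC}). It therefore suffices to prove $\overline c^M\le c_T^M\le\underline c^M$ for every $M>1$.

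For the bound $c_T^M\le\underline c^M$, I would view the planar front $\underline U^M$ as a function on $\R\times[-M,M]$ that does not depend on $t$. Since $\underline f\le f$ on $[0,1]\times[0,1]$, one has $\underline f(u)\le f^T(t,u)$ for all $(t,u)\in\R\times[0,1]$, so the planar-front equation for $\underline U^M$ gives
\begin{equation*}
(\underline U^M)_t-\underline c^M(\underline U^M)_\xi-(\underline U^M)_{\xi\xi}-f^T(t,\underline U^M)=\underline f(\underline U^M)-f^T(t,\underline U^M)\le 0
\end{equation*}
on $\R\times(-M,M)$, while $\underline U^M(t,-M)=1$, $\underline U^M(t,+M)=0$, $\underline U^M(T,\cdot)=\underline U^M(0,\cdot)$ and $\underline U^M(0,0)=\theta_g$. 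Thus $(\underline U^M,\underline c^M)$ meets all the hypotheses of Lemma \ref{lemABC} (with the two conditions $\tilde U(T,\cdot)\le\tilde U(0,\cdot)$ and $\tilde U(0,0)\ge\theta_g$ holding here as equalities), and the lemma yields $c_T^M\le\underline c^M$.

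For the bound $\overline c^M\le c_T^M$, I would reverse the roles and test the $f^T$-front $U_T^M$ against the problem defining $(\overline c^M,\overline U^M)$. Since $f\le\overline f$ on $[0,1]\times[0,1]$, the equation satisfied by $U_T^M$ gives
\begin{equation*}
(U_T^M)_t-c_T^M(U_T^M)_\xi-(U_T^M)_{\xi\xi}-\overline f(U_T^M)=f^T(t,U_T^M)-\overline f(U_T^M)\le 0
\end{equation*}
on $\R\times(-M,M)$, together with $U_T^M(t,-M)=1$, $U_T^M(t,+M)=0$, $U_T^M(T,\cdot)=U_T^M(0,\cdot)$ and $U_T^M(0,0)=\theta_g$. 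As $\overline f$ is autonomous, hence in particular $T$-periodic in $t$, Lemma \ref{lemABC} applies verbatim to the truncated front problem attached to $\overline f$, with the competitor $(U_T^M,c_T^M)$; it gives $\overline c^M\le c_T^M$. Combining the two bounds and passing to the limit $M\to+\infty$ proves the proposition.

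I do not expect a genuine obstacle here: the content is the elementary remark that, by the ordering $\underline f\le f\le\overline f$ of the nonlinearities, the planar fronts become sub/supersolutions of the truncated $f^T$-equation (and, symmetrically, the $f^T$-front becomes a subsolution of the $\overline f$-equation). The only points that demand attention are the bookkeeping in the two invocations of Lemma \ref{lemABC} — checking that the boundary values, the time-monotonicity condition and the normalization at $\theta_g$ are present with the correct inequalities — and the observation that Lemma \ref{lemABC}, although stated for $f^T$, is really a statement about the truncated front problem associated with an arbitrary $T$-periodic nonlinearity, so that it may be used for $\overline f$. The limit $M\to+\infty$ is exactly the one already performed in \cite{ABC} and \cite{BC} to construct $(c_T,U_T)$, $(\underline c,\underline U)$ and $(\overline c,\overline U)$, so it contributes nothing new.
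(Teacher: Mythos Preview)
Your argument for the upper bound $c_T^M\le\underline c^M$ is exactly the paper's. For the lower bound, however, you and the paper diverge.

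You apply Lemma~\ref{lemABC} to the truncated problem attached to the KPP nonlinearity $\overline f$, using $(U_T^M,c_T^M)$ as competitor. This is conceptually the cleanest route, but it rests on the claim that Lemma~\ref{lemABC} ``is really a statement about the truncated front problem associated with an arbitrary $T$-periodic nonlinearity''. That claim needs justification: the lemma is quoted from \cite{ABC}, where the setting is bistable, and the construction of $(\overline c^M,\overline U^M)$ is imported from \cite{BC}; you would have to check that the sliding argument behind Lemma~\ref{lemABC} goes through for the KPP truncated front as built in \cite{BC}. It probably does, but you are asserting rather than verifying it.

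The paper sidesteps this issue. Instead of invoking the lemma for a different nonlinearity, it transforms the original problem by the reflection $u\mapsto 1-u$, $\xi\mapsto -\xi$: setting $\tilde f^T(t,u)=-f^T(t,1-u)$ and $V_T^M(t,\xi)=1-U_T^M(t,-\xi)$, the pair $(-c_T^M,V_T^M)$ solves the truncated problem for $\tilde f^T$, which is again bistable in the sense of \cite{ABC}. One then tests $\overline V^M(\xi)=1-\overline U^M(-\xi)$ as a subsolution of that problem (using $f^T\le\overline f$), and Lemma~\ref{lemABC} --- now applied squarely within its stated hypotheses --- gives $-c_T^M\le-\overline c^M$. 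The reflection trick buys you the lower bound without having to extend the lemma beyond the bistable case.
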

\begin{proof}
Let $M>1$ and $T\in (0,T_f)$. We are going to twice apply Lemma \ref{lemABC}. We have on $(-M,M)$
$$(\underline{U}^{M})_{t}-(\underline{U}^{M})_{\xi \xi} -\underline{c}^M~(\underline{U}^M)_\xi-f^{T}(t,\underline{U}^M)=\underline{f}(\underline{U}^M)-f^{T}(t,\underline{U}^M)\leq0.$$
Furthermore, $\underline{U}^M(-M)=1,~~\underline{U}^M(+M)=0~~\text{and}~~\underline{U}^M(0)=\theta_g.$
Consequently 
$$c_T^M\leq \underline{c}^M.$$
We define the functions $\tilde{f}^T:\R\times[0,1]\to\R,~(t,u)\mapsto-f^T(t,1-u)$ and $V_T^M:\R\times[-M,M]\to\R,~(t,\xi)\mapsto 1-U(t,-\xi)$. The couple $(-c_T^M,V^M_T)$ solves the problem
$$ \begin{cases} 
(V^M_T)_t-(-c^M_T)(V^M_T)_{\xi}-(V^M_T)_{\xi \xi}-\tilde{f}^{T}(t,V^M_T)=0,~~~~\text{on}~\R \times (-M,M), \\
V^M_T(t,-M)=1,~~V^M_T(t,+M)=0,~~~~\forall t \in \R,\\
V^M_T(T,\xi)=V^M_T(0,\xi),~~~\forall \xi \in [-M,M], \\
V^M_T(0,0)=1-\theta_g.
\end{cases}$$
Since the nonlinearity $\tilde{f}^T$ satisfies the hypotheses of \cite{ABC}, Lemma \ref{lemABC} could be rewrites:\\
\textit{
Let $M>1$ be a fixed constant. If $(\tilde{U},\tilde{c})$ satisfies \\
$$\begin{cases}
\tilde{U}_t-\tilde{c} \tilde{U}_{\xi}-\tilde{U}_{\xi \xi}-\tilde{f}^{T}(t,\tilde{U})\leq0,~~~~\text{on}~\R \times (-M,M), \\
\tilde{U}(t,-M)\leq 1,~~\tilde{U}(t,+M)\leq 0,~~~~\forall t \in \R,\\
\tilde{U}(T,\xi)\leq\tilde{U}(0,\xi),~~~~\forall \xi \in [-M,M], \\
\tilde{U}(0,0)\geq 1-\theta_g,
\end{cases}$$
then, we have $$-c_T^M\leq\tilde{c}.$$}
Yet, defining $\overline{V}^{M}(\xi)=1-\overline{U}^{M}(-\xi)$, we have for all $\xi \in (-M,M)$
\begin{equation*}
\begin{split}
&\overline{V}^{M}_{t}(\xi)-\overline{V}^{M}_{\xi \xi}(\xi) -(-\overline{c}^M)~\overline{V}^M_\xi(\xi)-\tilde{f}^{T}(t,\overline{V}^M(\xi))\\
=&-\big\{\overline{U}^{M}_t(-\xi)-\overline{U}^{M}_{\xi\xi}(-\xi)-\overline{c}^M\overline{U}^{M}_\xi(-\xi)\big\}+f^T(t,\overline{U}^M(-\xi))\\
=&~f^{T}(t,\overline{U}^M(-\xi))-\overline{f}(\overline{U}^M(-\xi))\leq0.
\end{split}
\end{equation*}
Furthermore, $\overline{V}^M(-M)=1,~~\overline{V}^M(+M)=0~~\text{and}~~\overline{V}^M(0)=1-\theta_g.$
Consequently 
$$ -c^M_T\leq -\overline{c}^M.$$
We thus showed that 
$$\overline{c}^M \leq c_T^M\leq \underline{c}^M,~~\forall M>1.$$
We pass to the limit when $M$ tends to infinity, it occurs that $\overline{c} \leq c_T\leq \underline{c}.$
\end{proof}
\noindent
Consequently, there exists a sequence $(T_n)_n$, with $T_n\xrightarrow{n\to+\infty}0$ such that $(c_{T_n})_n$ converges to a constant $c^*\in \R$.
\begin{proposition}
\label{convergence}
The sequence $(U_{T_n})_n$ converges up to extraction of a subsequence in $W^{1,2;p}_{\text{loc}}(\R^2)$ weakly and in  $\mathcal{C}^{0,\alpha}_{\text{loc}}(\R^2)$ to a function $U^*$ for any $1< p<+\infty$ and for any $\alpha \in (0,1)$.
\end{proposition}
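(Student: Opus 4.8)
The plan is to obtain this compactness exactly as in the extraction already performed in the proof of Lemma \ref{lemstab38}: a uniform a priori bound in $W^{1,2;p}_{\text{loc}}(\R^2)$, followed by weak compactness and the compact Sobolev embedding into H\"older spaces. Recall that, after extension by $T_n$-periodicity in $t$, each $U_{T_n}$ is defined on all of $\R^2$ and solves
$$
(U_{T_n})_t - c_{T_n}(U_{T_n})_\xi - (U_{T_n})_{\xi\xi} = f^{T_n}(t,U_{T_n}) \quad \text{on } \R^2, \qquad 0 \le U_{T_n} \le 1 .
$$
Since $f$ is continuous and $1$-periodic in time, one has $|f^{T_n}(t,U_{T_n})| = |f(t/T_n,U_{T_n})| \le \|f\|_{L^\infty(\R\times[0,1])} =: M_0$ for all $n$, and by the preceding proposition the speeds $c_{T_n}$ are uniformly bounded (they lie in $[\overline{c},\underline{c}]$). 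Thus the $U_{T_n}$ solve a family of uniformly parabolic equations with the same ellipticity constant, uniformly bounded constant drift, and uniformly bounded right-hand sides and solutions.

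I would fix $1 < p < +\infty$ and a bounded open set $Q \subset \R^2$, and pick a bounded open set $Q'$ with $\overline{Q} \subset Q'$. Standard interior parabolic $L^p$ estimates give a constant $C = C(p, Q, Q', \sup_n |c_{T_n}|)$ independent of $n$ with
$$
\|U_{T_n}\|_{W^{1,2;p}(Q)} \le C\big( \|f^{T_n}(\cdot,U_{T_n})\|_{L^p(Q')} + \|U_{T_n}\|_{L^p(Q')} \big) \le C\,(M_0+1)\,|Q'|^{1/p},
$$
so $(U_{T_n})_n$ is bounded in $W^{1,2;p}(Q)$. As $L^p$ is reflexive for $1<p<+\infty$, a subsequence converges weakly in $W^{1,2;p}(Q)$, and since $W^{1,2;p}(Q)$ embeds compactly into $\mathcal{C}^{0,\alpha}(\overline{Q})$ for every $\alpha \in (0,1-2/p)$ when $p>2$ --- the same embedding invoked in the proof of Lemma \ref{lemstab38} --- that subsequence also converges strongly in $\mathcal{C}^{0,\alpha}(\overline{Q})$.

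To produce a single subsequence valid on all of $\R^2$ and for all exponents at once, I would run a diagonal extraction over the exhausting squares $Q_k = (-k,k)^2$, $k \in \N^*$, and over a sequence $p_j \uparrow +\infty$. The resulting limit $U^*$ lies in $W^{1,2;p}_{\text{loc}}(\R^2)$ for every $1<p<+\infty$ (lower semicontinuity of the norm under weak convergence, together with the uniform bounds), satisfies $0 \le U^* \le 1$, and $(U_{T_n})_n$ converges to $U^*$ weakly in $W^{1,2;p}_{\text{loc}}(\R^2)$ and strongly in $\mathcal{C}^{0,\alpha}_{\text{loc}}(\R^2)$ for every $1<p<+\infty$ and every $\alpha \in (0,1)$ (any such $\alpha$ satisfies $\alpha < 1-2/p$ for $p$ large), which is the assertion of the proposition. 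There is no serious obstacle here; the only points needing a little care are the bookkeeping of the diagonal extraction so that one subsequence works for all $k$ and all $p_j$ simultaneously, and the remark that the rapidly oscillating nonlinearity $f^{T_n}$ enters only through its uniform $L^\infty$ bound at this stage --- identifying $U^*$ with the trivial extension of the bistable front $U_g$ (which requires passing to the limit in the equation and using the averaged nonlinearity $g$) is left to the remainder of the proof of Theorem \ref{petiteconv}.
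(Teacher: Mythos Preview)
Your proof is correct and follows essentially the same approach as the paper's own proof: uniform $L^\infty$ bounds on $U_{T_n}$, $f^{T_n}(\cdot,U_{T_n})$ and $c_{T_n}$, interior parabolic $L^p$ estimates to get boundedness in $W^{1,2;p}_{\text{loc}}$, then weak compactness and the compact embedding into $\mathcal{C}^{0,\alpha}_{\text{loc}}$. You are in fact more careful than the paper about the diagonal extraction over compacts and exponents, which the paper leaves implicit.
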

\begin{proof}
$U_{T_n}$ is solution of $(U_{T_n})_t-c_{T_n}(U_{T_n})_{\xi}-(U_{T_n})_{\xi \xi}-f^{T_n}(t,U_{T_n})=0$ on $\R^2$ and satisfies $0\leq U_{T_n} \leq 1$. Consequently, as $(c_{T_n})_n$ is bounded in $\R$ and as $(f^{T_n}(\cdot,U_{T_n}))_n$ is bounded in  $L^\infty_{\text{loc}}(\R^2)$, the parabolic regularity theory implies that for $1\leq p <+\infty$, $(U_{T_n})_n$ is bounded in $W^{1,2;p}_{\text{loc}}(\R^2)$. Yet $W^{1,2;p}_{\text{loc}}(\R^2)$ embeds compactly into $\mathcal{C}^{0,\alpha}_{\text{loc}}(\R^2)$ for $\alpha \in (0,1-2/p)$, for $p>2$. So, there exists $U^*$ such that up to extraction of a subsequence, $(U_{T_n})_n$ converges to $U^*$ in $W^{1,2;p}_{\text{loc}}(\R^2)$ weakly and in $\mathcal{C}^{0,\alpha}_{\text{loc}}(\R^2)$ for any $1< p<+\infty$ and for any $\alpha \in (0,1)$.
\end{proof}
\begin{proposition}
\label{distribution}
We have the following convergence result 
$$f^{T_n}(\cdot,U_{T_n})\xrightarrow{n\to+\infty}g(U^*)~\text{in}~\mathcal{D}'(\R^2)~\text{(up to extraction of a subsequence)}.$$
\end{proposition}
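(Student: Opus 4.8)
The plan is to reduce the statement to an elementary periodic-averaging fact by first removing the dependence of the second argument of $f$ on $U_{T_n}$. For $(t,\xi)$ in any compact set of $\R^2$ write
$$f^{T_n}(t,U_{T_n}(t,\xi))=\Big(f\big(t/T_n,U_{T_n}(t,\xi)\big)-f\big(t/T_n,U^*(t,\xi)\big)\Big)+f\big(t/T_n,U^*(t,\xi)\big).$$
Since $f$ is of class $\mathcal{C}^1$ with $f_u$ bounded on $[0,1]^2$, the first bracket is bounded by $\|f_u\|_{L^\infty([0,1]^2)}\,|U_{T_n}-U^*|$, and by Proposition~\ref{convergence} the second factor tends to $0$ uniformly on every compact subset of $\R^2$; hence that term tends to $0$ in $\mathcal{D}'(\R^2)$. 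It therefore suffices to prove that $f(\cdot/T_n,U^*)\to g(U^*)$ in $\mathcal{D}'(\R^2)$ along the chosen subsequence.

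For this I would use the classical oscillation fact: if $\phi:\R\to\R$ is $1$-periodic and bounded and $\chi\in L^1(\R)$, then $\int_\R\phi(t/T_n)\chi(t)\,dt\to\big(\int_0^1\phi\big)\int_\R\chi$ as $T_n\to0$. Fix $\psi\in\mathcal{C}^\infty_c(\R^2)$ with support in $[-A,A]^2$. The difficulty is that in $f(t/T_n,U^*(t,\xi))$ the slow argument $U^*$ also depends on $t$, so the lemma cannot be applied directly; the remedy is to freeze it. Partition $[-A,A]$ into intervals $I_j=[\tau_j,\tau_{j+1}]$ of length at most $\eta$ and estimate
$$\Big|\int_{\R^2}f(t/T_n,U^*(t,\xi))\psi\,dt\,d\xi-\sum_j\int_{-A}^{A}\!\!\int_{I_j}f(t/T_n,U^*(\tau_j,\xi))\psi\,dt\,d\xi\Big|\le\|f_u\|_\infty\,\|\psi\|_{L^1}\,\omega(\eta),$$
where $\omega$ is the modulus of continuity in $t$ of $U^*$ on $[-A,A]^2$, which is finite, indeed $\omega(\eta)\le C\eta^\alpha$, because $U^*\in\mathcal{C}^{0,\alpha}_{\text{loc}}(\R^2)$. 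For each fixed $j$ and each fixed $\xi$ the second argument of $f$ is now frozen, so the oscillation lemma gives $\int_{I_j}f(t/T_n,U^*(\tau_j,\xi))\psi(t,\xi)\,dt\to g(U^*(\tau_j,\xi))\int_{I_j}\psi(t,\xi)\,dt$ as $n\to\infty$; this quantity is dominated by $\|f\|_\infty\|\psi(\cdot,\xi)\|_{L^1}$, so dominated convergence in $\xi$ and finiteness of the sum over $j$ show that $\sum_j\int\!\!\int_{I_j}f(t/T_n,U^*(\tau_j,\xi))\psi\,dt\,d\xi$ converges to $\sum_j\int\!\!\int_{I_j}g(U^*(\tau_j,\xi))\psi\,dt\,d\xi$. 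Finally, since $g$ is Lipschitz on $[0,1]$ (again because $f_u$ is bounded), this last sum differs from $\int_{\R^2}g(U^*(t,\xi))\psi\,dt\,d\xi$ by at most $C'\eta^\alpha$. Letting first $n\to\infty$ and then $\eta\to0$ yields $\langle f(\cdot/T_n,U^*),\psi\rangle\to\langle g(U^*),\psi\rangle$, which is the claim.

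The main obstacle is precisely the coupling of the fast time scale $t/T_n$ with the slow, also $t$-dependent argument $U^*(t,\xi)$: one cannot invoke weak-$*$ convergence of periodic oscillations off the shelf. The freezing argument above dissolves this, and it works because (i) the limit $U^*$ is genuinely (H\"older) continuous on compacts, by Proposition~\ref{convergence}, so its $t$-oscillation over a short interval is uniformly small, and (ii) $f$ is globally Lipschitz in $u\in[0,1]$, so the frozen approximation produces only an $O(\eta^\alpha)$ error, uniform in $n$ and in $\xi$. Everything else — exchanging the limit with the finite $j$-sum, dominated convergence in $\xi$, and the periodic-averaging lemma itself — is routine.
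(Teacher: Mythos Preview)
Your argument is correct, but it is organized differently from the paper's. The paper exploits the exact $T_n$-periodicity of $U_{T_n}$: after the change of variable $s=t/T_n$, the time integral splits into a sum over full periods, and on each period $U_{T_n}((s+q)T_n,\xi)=U_{T_n}(sT_n,\xi)$ with $sT_n\in[0,T_n]$; the test function $\phi$ is then frozen at the left endpoint of each (microscopic) period, and the key cancellation $\int_0^1\big(f(s,\cdot)-g(\cdot)\big)ds=0$ is used directly. You instead first replace $U_{T_n}$ by $U^*$ using the Lipschitz bound on $f$ in $u$, then freeze $U^*$ on a mesoscopic partition of the time interval and apply the classical periodic-averaging lemma on each piece. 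Your route never uses the $T_n$-periodicity of $U_{T_n}$ and would apply to any sequence converging to a continuous limit locally uniformly; it packages the oscillation into a clean black-box lemma at the cost of the extra freezing-in-$t$ step. The paper's route stays closer to the specific structure of the problem and avoids quoting the averaging lemma, doing the computation by hand on each period. Both arrive at the same estimate, with the same two ingredients (Lipschitz in $u$ and averaging of the fast variable); only the order of operations and the scale on which one freezes differ.
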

\begin{proof} 
We still denote $(U_{T_n})_n$ the subsequence of $(U_{T_n})_n$ which converges to $U^*$. Let $\phi \in \mathcal{C}_c^\infty(\mathbb{R}\times\mathbb{R})$. There exist $(a,b,\xi_0,\xi_1) \in \mathbb{R}^4$ such that the support of $\phi$ is included in $K=[a,b] \times [\xi_0,\xi_1]$. So we have 
\begin{flalign*}
 &\int_{\xi=-\infty}^{+\infty} \int_{t=-\infty}^{+\infty} \left[f^{T_n}(t,U_{T_n}(t,\xi))-g(U^*(\xi))\right]\phi(t,\xi) ~dt~ d\xi& &\\
 =& \int_{\xi=\xi_0}^{\xi_1} \int_{t=a}^{b} \left[f^{T_n}(t,U_{T_n}(t,\xi))-g(U^*(\xi))\right]\phi(t,\xi) ~dt~d\xi. & &
\end{flalign*}
Let $(k_n,p_n) \in \mathbb{N}^2$ such that $[k_nT_n,p_nT_n] \subset [a,b]$, $|a-k_nT_n| \leq T_n$ et $|b-p_nT_n| \leq T_n$.\\
\vspace{-0.4em}\\
$\int_{\xi_0}^{\xi_1}\int_a^b \left[f^{T_n}(t,U_{T_n}(t,\xi))-g(U^*(\xi))\right]\phi(t,\xi) ~dt~ d\xi=\underbrace{\int_{\xi_0}^{\xi_1}\int_a^{k_nT_n}}_{I_n} +\underbrace{\int_{\xi_0}^{\xi_1}\int_{k_nT_n}^{p_nT_n}}_{J_n}+\underbrace{\int_{\xi_0}^{\xi_1}\int_{p_nT_n}^b}_{K_n}$.\\
\vspace{-0.1em}\\
The integrals $I_n$ et $K_n$ are treated in the same way: 
\begin{flalign*}
I_n & =\int_{\xi_0}^{\xi_1}\int_a^{k_nT_n} \left[f^{T_n}(t,U_{T_n}(t,\xi))-g(U^*(\xi))\right]\phi(t,x) dt~d\xi & & \\
 & =\int_{-\infty}^{+\infty}\int_{-\infty}^{+\infty} 1_{[a,k_nT_n] \times [\xi_0,\xi_1]}(t,\xi) \left[f^{T_n}(t,U_{T_n}(t,\xi))-g(U^*(\xi))\right]\phi(t,\xi) ~dt~d\xi. 
\end{flalign*}
The quantity $\left[f^{T_n}(t,U_{T_n}(t,\xi))-g(U^*(\xi))\right]\phi(t,\xi)$ is bounded independently of $n$, $t$ and $\xi$. So, by Lebesgue's theorem, the integral tends to $0$ when $n$ tend to infinity. So
$$\lim\limits_{n \to +\infty} I_n=\lim\limits_{n \to +\infty} K_n=0.$$
Let us consider $J_n$:
\begin{flalign*}
J_n  &= \int_{\xi_0}^{\xi_1} \int_{k_nT_n}^{p_nT_n} [f^{T_n}(t,U_{T_n}(t,\xi))-g(U^*(\xi))]\phi(t,\xi) ~dt~d\xi ~~~~~(s=\frac{t}{T_n})&&\\ 
 &= T_n \int_{\xi_0}^{\xi_1} \int_{k_n}^{p_n} \left[f(s,U_{T_n}(sT_n,\xi))-g(U^*(\xi))\right]\phi(sT_n,\xi) ~ds~d\xi & &\\
&= T_n \int_{\xi_0}^{\xi_1}  \sum_{q=k_n}^{p_n-1} \int_{q}^{q+1} \left[f(s,U_{T_n}(sT_n,\xi))-g(U^*(\xi))\right]\phi(sT_n,\xi) ~ds~d\xi & &\\
  &= T_n \int_{\xi_0}^{\xi_1}  \sum_{q=k_n}^{p_n-1} \int_{0}^{1} \left[f(s+q,U_{T_n}(sT_n+qT_n,\xi))-g(U^*(\xi))\right]\phi(sT_n+qT_n,\xi) ~ds~d\xi & &\\
&= T_n \int_{\xi_0}^{\xi_1}  \sum_{q=k_n}^{p_n-1} \int_{0}^{1} \left[f(s,U_{T_n}(sT_n,\xi))-g(U^*(\xi))\right]\phi(sT_n+qT_n,\xi) ~ds~d\xi .&&
\end{flalign*}
We split now the integral into two parts.\\
\begin{flalign*}
   J_n  & =\underbrace{T_n \sum_{q=k_n}^{p_n-1} \int_{0}^{1} \int_{\xi_0}^{\xi_1} \left[ f(s,U^*(\xi))-g(U^*(\xi)) \right] \phi(sT_n+qT_n,\xi) ~d\xi~ds}_{J_n^1}\\
 &+ \underbrace{T_n \int_{\xi_0}^{\xi_1}  \sum_{q=k_n}^{p_n-1} \int_{0}^{1} [f(s,U_{T_n}(sT_n,\xi))-f(s,U^*(\xi))]\phi(sT_n+qT_n,\xi) ~ds~d\xi}_{J_n^2}.
 \end{flalign*}
We study $J^2_n$: 
\begin{flalign*}
& T_n \int_{\xi_0}^{\xi_1}  \sum_{q=k_n}^{p_n-1} \int_{0}^{1} [f(s,U_{T_n}(sT_n,\xi))-f(s,U^*(\xi))]\phi(sT_n+qT_n,\xi) ~ds~d\xi \\
\leq ~& T_n(p_n-k_n)\|f_u\|_{\infty,[0,1]^2} \|\phi\|_\infty\int_{0}^{1} |U_{T_n}(sT_n,\xi)-U^*(\xi)|~ds &&\\
\leq  ~& |b-a| \|f_u\|_{\infty,[0,1]^2} \|\phi\|_\infty \|U_{T_n}-U^*\|_{\infty,[0,1]\times [\xi_0,\xi_1]}\xrightarrow{n \to +\infty}0 ~~(\text{according to Prop}~\ref{convergence}). &&
\end{flalign*}
We study $J^1_n$: \\
Let $\epsilon>0$. By uniform continuity of the function $\phi$, there exists $\eta>0$ such that
$$\|(t,\xi)-(s,\xi')\|\leq \eta \Longrightarrow |\phi(t,\xi)-\phi(s,\xi')|\leq\epsilon.$$
For $n$ large enough, we have
$$\forall q \in [k_n,p_n-1],~~\forall s \in [0,1],~~\forall \xi \in [\xi_0,\xi_1],~~~~\|(sT_n+qT_n,\xi)-(qT_n,\xi)\| \leq \eta.$$
Consequently, we have $|\phi(sT_n+qT_n,\xi)-\phi(qT_n,\xi)|\leq \epsilon$. We write then\\
\begin{flalign*}
J_n^1 &= \underbrace{T_n \int_{\xi_0}^{\xi_1}  \sum_{q=k_n}^{p_n-1} \int_{0}^{1} \Big[f(s,U^*(\xi))-g(U^*(\xi))\Big]\Big[\phi(sT_n+qT_n,\xi)- \phi(qT_n,\xi)\Big]~ds~d\xi }_{J^3_n}\\
&+\underbrace{T_n \int_{\xi_0}^{\xi_1}  \sum_{q=k_n}^{p_n-1} \int_{0}^{1} \Big[f(s,U^*(\xi))-g(U^*(\xi))\Big] \phi(qT_n,\xi)~ds~d\xi}_{J^4_n}.
\end{flalign*}
We start with $J^4_n$:\\
$J^4_n=T_n \int_{\xi_0}^{\xi_1}  \sum_{q=k_n}^{p_n-1} \phi(qT_n,\xi) \left\{  \int_{0}^{1} \Big[f(s,U^*(\xi))-g(U^*(\xi))\Big] ~ds \right\}~d\xi$.\\
By definition of $g$, we have $\int_{0}^{1} \Big[f(s,U^*(\xi))-g(U^*(\xi))\Big] ~ds=0$. Consequently $J^4_n=0$.
\vspace{0.5em}\\
We look now at $J^3_n$:
\begin{flalign*}
|J^3_n| &\leq T_n \int_{\xi_0}^{\xi_1}  \sum_{q=k_n}^{p_n-1} \int_{0}^{1} |f(s,U^*(\xi))-g(U^*(\xi))|~|\phi(sT_n+qT_n,\xi)- \phi(qT_n,\xi)|~ds~d\xi. &&
\end{flalign*}
The quantity $|f(s,U^*(\xi))-g(U^*(\xi))|$ is bounded by a constant $M^*$. Consequently
\begin{flalign*}
|J^3_n| & \leq (\xi_1-\xi_0) T_n(p_n-k_n)M^*\e \leq (\xi_1-\xi_0)(b-a) M^*\epsilon.
\end{flalign*}
So, we proved that $\lim\limits_{n\to +\infty} J^3_n=0$. Consequently, we have $\lim\limits_{n\to +\infty} J^1_n=0$, which concludes the proof.
\end{proof}
\begin{proposition}
The function $U^*$ is of class $\mathcal{C}^{1,2}(\R)$ and
\begin{equation*}
(U^*)_t-c^*(U^*)_{\xi}-(U^*)_{\xi \xi}-g(U^*)=0~~\text{on}~\R^2.
\end{equation*}
\end{proposition}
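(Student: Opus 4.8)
The plan is to pass to the limit as $n\to+\infty$ in the equation satisfied by $U_{T_n}$, first in the sense of distributions, and then to recover classical regularity for $U^*$ by parabolic estimates. Concretely, I would test the identity $(U_{T_n})_t-c_{T_n}(U_{T_n})_{\xi}-(U_{T_n})_{\xi \xi}-f^{T_n}(t,U_{T_n})=0$ against an arbitrary $\phi\in\mathcal{C}_c^\infty(\R^2)$. By Proposition \ref{convergence}, $(U_{T_n})_n$ converges to $U^*$ weakly in $W^{1,2;p}_{\text{loc}}(\R^2)$ for every $1<p<+\infty$, so that $\partial_tU_{T_n}$, $\partial_\xi U_{T_n}$ and $\partial_{\xi\xi}U_{T_n}$ converge weakly in $L^p_{\text{loc}}(\R^2)$ to $\partial_tU^*$, $\partial_\xi U^*$ and $\partial_{\xi\xi}U^*$; since $\phi$ lies in the dual space and has compact support, the three linear terms pass to the limit. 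Because moreover $c_{T_n}\to c^*$, the advection term $c_{T_n}\partial_\xi U_{T_n}$ also converges to $c^*\partial_\xi U^*$ after integration against $\phi$. Finally, Proposition \ref{distribution} gives $f^{T_n}(\cdot,U_{T_n})\to g(U^*)$ in $\mathcal{D}'(\R^2)$. Adding these convergences yields
$$(U^*)_t-c^*(U^*)_{\xi}-(U^*)_{\xi \xi}-g(U^*)=0\qquad\text{in }\mathcal{D}'(\R^2).$$

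It then remains to upgrade the regularity. By Proposition \ref{convergence} the limit $U^*$ belongs to $W^{1,2;p}_{\text{loc}}(\R^2)$ for every $p<+\infty$ and to $\mathcal{C}^{0,\alpha}_{\text{loc}}(\R^2)$ for every $\alpha\in(0,1)$; since $g$ is of class $\mathcal{C}^1$ on $[0,1]$, the source term $g(U^*)$ also lies in $\mathcal{C}^{0,\alpha}_{\text{loc}}(\R^2)$. Reading the displayed identity as a linear, constant-coefficient, uniformly parabolic equation for $U^*$ with Hölder-continuous right-hand side, interior parabolic Schauder estimates give $U^*\in\mathcal{C}^{1+\alpha/2,\,2+\alpha}_{\text{loc}}(\R^2)$; in particular $U^*$ is of class $\mathcal{C}^{1,2}(\R^2)$ and the equation holds in the classical sense.

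The only genuinely delicate point of the whole argument has already been dealt with in Proposition \ref{distribution}: because $f^{T_n}(t,\cdot)=f(t/T_n,\cdot)$ oscillates faster and faster as $T_n\to0$, one cannot simply combine the locally uniform convergence $U_{T_n}\to U^*$ with the continuity of $f$ to identify the limit of the nonlinear term — it is the averaging performed there that replaces $f^{T_n}(\cdot,U_{T_n})$ by $g(U^*)$. Granting that proposition, the present statement follows by the routine combination of weak convergence of the linear terms and standard parabolic regularity described above; I do not expect any further obstacle.
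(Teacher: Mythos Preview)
Your proposal is correct and follows essentially the same route as the paper: test the equation against $\phi\in\mathcal{C}_c^\infty(\R^2)$, pass the linear terms to the limit using the weak $W^{1,2;p}_{\text{loc}}$ convergence of Proposition~\ref{convergence} together with $c_{T_n}\to c^*$, invoke Proposition~\ref{distribution} for the nonlinear term, and finish with parabolic regularity. The only cosmetic difference is that the paper first integrates by parts (moving $\partial_t$ and one $\partial_\xi$ onto $\phi$) before passing to the limit, whereas you use the weak convergence of the derivatives directly; these are equivalent, and your identification of Proposition~\ref{distribution} as the only nontrivial ingredient is accurate.
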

\begin{proof}
We know that $U_{T_n}$ is a function of class $\mathcal{C}^{1,2}(\R^2)$ satisfying the equation
\begin{equation}
\label{one}
(U_{T_n})_t-c_n(U_{T_n})_{\xi}-(U_{T_n})_{\xi \xi}-f^{T_n}(t,U_{T_n})=0,~~\text{on}~\R^2. 
\end{equation}
We also know that $(U_{T_n})_n$ converges up to extraction of a subsequence to $U^*$ in $W^{1,2;p}_{\text{loc}}(\R^2)$, for any $1 < p <+\infty$. In particular, the convergence takes place for $p=2$. Let $\phi \in \mathcal{C}_c^\infty(\R^2)$. Multiplying Equation \eqref{one} by $\phi$, and integrating by parts on $\R^2$, we obtain
\begin{equation}
\label{two}
-\int_{\R^2}U_{T_n}\phi_t-c_n\int_{\R^2}(U_{T_n})_\xi\phi+\int_{\R^2}(U_{T_n})_\xi\phi_\xi-\int_{\R^2}f^{T_n}(t,U_{T_n})\phi=0.
\end{equation}
We have
\begin{flalign*}
\big|\int_{\R^2}U_{T_n}\phi_t-\int_{\R^2}U^*\phi_t\big|& \leq \|U_{T_n}-U^*\|_{L^2(\text{Supp}(\phi))}\|\phi_t\|_{L^2(\text{Supp}(\phi))}&&\\
&\leq \|U_{T_n}-U^*\|_{W^{1,2,2}(\text{Supp}(\phi))}\|\phi_t\|_{L^2(\text{Supp}(\phi))}. 
\end{flalign*}
Also 
$$\int_{\R^2}(U_{T_n})_\xi\phi \xrightarrow{n \to \infty} \int_{\R^2}(U^*)_\xi\phi~~\text{and}~~\int_{\R^2}(U_{T_n})_\xi\phi_\xi \xrightarrow{n \to \infty} \int_{\R^2}(U^*)_\xi\phi_\xi.$$
Furthermore, we showed in Lemma \ref{distribution} that
$$\int_{\R^2}f(\frac{t}{T_n},U_{T_n})\phi \xrightarrow{n \to \infty} \int_{\R^2}g(U^*)\phi.$$
Consequently, if we pass to the limit as $n\to +\infty$ in \eqref{two}, we have that
\begin{equation*}
\label{three}
-\int_{\R^2}U^*\phi_t-c^*\int_{\R^2}(U^*)_\xi\phi+\int_{\R^2}(U^*)_\xi\phi_\xi-\int_{\R^2}g(U^*)\phi=0.
\end{equation*}
Consequently $U^*$ is a weak solution of the equation
\begin{equation*}
(U^*)_t-c^*(U^*)_{\xi}-(U^*)_{\xi \xi}-g(U^*)=0~~\text{on}~\R^2.
\end{equation*}
By parabolic regularity theory, the function $U^*$ is in fact a function $\mathcal{C}^{1,2}(\R^2)$, and it satisfies the previous equation in the classical sense on $\R^2$.
\end{proof}
\begin{proposition}
The function $U^*$ does not depend on $t$. In other terms
$$\partial_t{U^*}(t,\xi)=0,~~~~\forall t \in \R,~~\forall \xi \in \R.$$
\end{proposition}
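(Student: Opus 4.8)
The plan is to exploit the time-periodicity of the approximating fronts, which degenerates to $t$-independence in the homogenization limit. Recall that each $U_{T_n}$ solves the equation with the extra constraint $U_{T_n}(t+T_n,\xi)=U_{T_n}(t,\xi)$ for all $(t,\xi)\in\R^2$, that $T_n\to0^+$, and that along the extracted subsequence $U_{T_n}\to U^*$ in $\mathcal{C}^{0,\alpha}_{\text{loc}}(\R^2)$, i.e. locally uniformly on $\R^2$, with $U^*\in\mathcal{C}^{1,2}(\R^2)$.

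First I would fix $\xi\in\R$ together with two arbitrary times $t,t'\in\R$, and for each $n$ choose $k_n\in\Z$ so that $\rho_n:=t'-k_nT_n-t$ satisfies $|\rho_n|\le T_n$; in particular $\rho_n\to0$. Using the $T_n$-periodicity in $t$, $U_{T_n}(t',\xi)=U_{T_n}(t'-k_nT_n,\xi)=U_{T_n}(t+\rho_n,\xi)$. Then I would pass to the limit in this identity. On a fixed compact neighborhood $K$ of $(t,\xi)$ the convergence $U_{T_n}\to U^*$ is uniform, so for $n$ large enough that $(t+\rho_n,\xi)\in K$ one has $|U_{T_n}(t+\rho_n,\xi)-U^*(t+\rho_n,\xi)|\le\|U_{T_n}-U^*\|_{L^\infty(K)}\to0$, and since $U^*$ is continuous and $\rho_n\to0$, $U^*(t+\rho_n,\xi)\to U^*(t,\xi)$; hence $U_{T_n}(t+\rho_n,\xi)\to U^*(t,\xi)$, while $U_{T_n}(t',\xi)\to U^*(t',\xi)$. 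Therefore $U^*(t',\xi)=U^*(t,\xi)$, and since $t,t',\xi$ were arbitrary, $U^*$ does not depend on $t$, which gives $\partial_tU^*(t,\xi)=0$ on $\R^2$ (the derivative existing because $U^*\in\mathcal{C}^{1,2}(\R^2)$).

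The argument is essentially soft and I do not expect a genuine obstacle; the one point deserving a little care is the limit passage on the right-hand side, where one must combine the locally uniform convergence $U_{T_n}\to U^*$ with the shrinking shift $\rho_n$ by means of a triangle inequality rather than by naively substituting the limit into $U_{T_n}(t+\rho_n,\xi)$. (An alternative would be to note directly that $U^*$ inherits $\tau$-periodicity in $t$ for every $\tau$ in the closure of $\{k\,T_n : n,k\}$, which is dense in $\R$, and then invoke continuity; this is the same idea packaged differently.)
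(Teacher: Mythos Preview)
Your proof is correct and follows essentially the same approach as the paper: both exploit the $T_n$-periodicity of $U_{T_n}$ together with $T_n\to0$ and a triangle inequality to conclude $U^*(t',\xi)=U^*(t,\xi)$. The only cosmetic difference is that the paper absorbs the shrinking time shift using a uniform $\mathcal{C}^{1,2}$ (Lipschitz) bound on $U_{T_n}$, whereas you absorb it via the continuity of the limit $U^*$ combined with locally uniform convergence; both are equally valid.
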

\begin{proof}
Let $(t,\xi) \in \R^2$ and $n \in \N$. Let $K$ be a compact set containing $(t,\xi)$ and $(0,\xi)$. According to the previous proposition, there exists $C_K \in \R$ such that $\|U_{T_n}\|_{\mathcal{C}^{1,2}(K)}\leq C_K$. Let  $k_n\in \Z$ such that $|t-k_nT_n|\leq T_n$. We have
$$|U^*(t,\xi)-U^*(0,\xi)| \leq|U^*(t,\xi)-U_{T_n}(t,\xi)|+|U_{T_n}(t,\xi)-U_{T_n}(0,\xi)|+|U_{T_n}(0,\xi)-U^*(0,\xi)|.$$
The first and the third term of the previous inequality are smaller than $\|U^*-U_{T_n}\|_{\infty,K}$. Let us examine the second term of the sum above:   
$$|U_{T_n}(t,\xi)-U_{T_n}(0,\xi)|=|U_{T_n}(t-k_nT_n,\xi)-U_{T_n}(0,\xi)| \leq \|U_{T_n}\|_{\mathcal{C}^{1,2}(K)}|t-k_nT_n|\leq C_K T_n.$$
Passing to the limit when $n$ tends to infinity, it occurs that
$$U^*(t,\xi)=U^*(0,\xi).$$
That is $U^*$ do not depend on $t$.
\end{proof}
\begin{proposition}
We have $c^*=c_g$ and $U^*\equiv U_g$.
\end{proposition}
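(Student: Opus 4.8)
The plan is to gather everything already known about $U^*$ and then identify $(c^*,U^*)$ with the unique normalized bistable planar front $(c_g,U_g)$. From the preceding propositions, $U^*$ does not depend on $t$, is a classical solution of the autonomous equation $(U^*)''+c^*(U^*)'+g(U^*)=0$ on $\R$, satisfies $0\le U^*\le 1$ and $U^*(0)=\theta_g$, and — being the $\mathcal{C}^{0}_{\text{loc}}$-limit of the functions $U_{T_n}(0,\cdot)$, which are decreasing by Proposition \ref{MONOT} — is nonincreasing. The only genuine issue will be to show that $U^*$ is not the constant $\theta_g$; everything else is soft.

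First I would establish the following dichotomy: either $U^*\equiv\theta_g$, or $U^*$ is a bistable planar front connecting $1$ and $0$ with $U^*(0)=\theta_g$. Indeed, if $(U^*)'(\xi_1)=0$ for some $\xi_1$, then $(U^*)'$ attains its maximum $0$ there, so $(U^*)''(\xi_1)=0$ and hence $g(U^*(\xi_1))=0$, i.e. $U^*(\xi_1)\in\{0,\theta_g,1\}$; by uniqueness for the first-order system associated with the ODE, $U^*$ is then constant, and $U^*(0)=\theta_g$ forces $U^*\equiv\theta_g$. So if $U^*\not\equiv\theta_g$ then $(U^*)'<0$ everywhere, $0<U^*<1$, and the limits $\ell_\pm:=U^*(\pm\infty)$ are zeros of $g$ with $\ell_+<\theta_g<\ell_-$, whence $\ell_-=1$ and $\ell_+=0$; the uniqueness (up to translation) of the bistable planar front recalled from \cite{AW} then yields $c^*=c_g$ and $U^*=U_g$.

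It remains to rule out $U^*\equiv\theta_g$, which I would do by trapping the $U_{T_n}$, uniformly in $n$, between two translates of the homogeneous KPP fronts $\underline{U}$ and $\overline{U}$. Since $\overline{c}\le c_{T_n}\le\underline{c}$, $\underline{U}'<0$, $\overline{U}'<0$ and $\underline{f}\le f^{T_n}\le\overline{f}$, the $t$-independent functions $\underline{U}(\cdot-a)$ and $\overline{U}(\cdot-b)$ are respectively a sub- and a supersolution of $\partial_t v-c_{T_n}\partial_\xi v-\partial_{\xi\xi}v=f^{T_n}(t,v)$ for all $a,b\in\R$: for instance $\partial_t[\underline{U}(\xi-a)]-c_{T_n}\partial_\xi[\underline{U}(\xi-a)]-\partial_{\xi\xi}[\underline{U}(\xi-a)]-f^{T_n}(t,\underline{U}(\xi-a))=(\underline{c}-c_{T_n})\underline{U}'(\xi-a)+\underline{f}(\underline{U}(\xi-a))-f^{T_n}(t,\underline{U}(\xi-a))\le 0$. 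Applying Lemmas \ref{TH1comparaison} and \ref{TH2comparaison} with $\g=\G=f^{T_n}$ — admissible because $f^{T_n}$ satisfies \eqref{01} and \eqref{bismoyT}, and because the thresholds $\delta_\pm$ furnished by these lemmas depend only on $f^{T_n}_u(t,u)=f_u(t/T_n,u)$, hence only on $f$ and not on $n$ — on $\R\times[R_0,+\infty)$ and on $\R\times(-\infty,R_0]$, and choosing the shifts as tight as possible (they are pinned by $U_{T_n}(0,0)=\theta_g=\underline{U}(0)=\overline{U}(0)$ together with the strict monotonicity $\partial_\xi U_{T_n}<0$), one obtains sequences $(a_n)_n$, $(b_n)_n$, bounded uniformly in $n$, with
$$\underline{U}(\xi-a_n)\le U_{T_n}(t,\xi)\le\overline{U}(\xi-b_n),\qquad\forall(t,\xi)\in\R^2.$$
Extracting further so that $a_n\to a^*$, $b_n\to b^*$ and letting $n\to+\infty$ along the subsequence defining $(c^*,U^*)$ gives $\underline{U}(\xi-a^*)\le U^*(\xi)\le\overline{U}(\xi-b^*)$, so $U^*(+\infty)=0$; thus $U^*\not\equiv\theta_g$ and we are in the second alternative, i.e. $c^*=c_g$ and $U^*=U_g$. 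Finally, since the limit $(c_g,U_g)$ does not depend on the chosen subsequence, the whole family $(c_T,U_T)$ converges as $T\to 0^+$, which is precisely the content of Theorem \ref{petiteconv}.

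I expect the delicate point to be the uniform-in-$n$ boundedness of the shifts $a_n,b_n$: the comparison lemmas require ordering information at the matching point $\xi=R_0$ valid for every $t$, whereas the normalization only pins $U_{T_n}$ at $(0,0)$, so one must combine it with the ($n$-uniform) rate at which $U_{T_n}(t,\cdot)$ approaches $0$ and $1$ — and it is exactly here that the scaling structure $f^{T_n}(t,u)=f(t/T_n,u)$, which makes $\lambda_0,\lambda_1,\delta_\pm$ and the KPP envelopes $\underline{f},\overline{f}$ all independent of $n$, is used. Once this uniform sandwich is available, the identification of $(c^*,U^*)$ with $(c_g,U_g)$ is immediate from the dichotomy above.
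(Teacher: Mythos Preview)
Your dichotomy — either $U^*\equiv\theta_g$ or $U^*$ is the normalized bistable front, hence $(c^*,U^*)=(c_g,U_g)$ — is correct and coincides with the paper's. The gap is in the step ruling out $U^*\equiv\theta_g$: the trapping argument is circular.

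To apply Lemma~\ref{TH1comparaison} on $\R\times[R_0,+\infty)$ with $\underline v=U_{T_n}$ and $\overline v=\overline U(\cdot-b_n)$ you must verify hypothesis~\eqref{comp23}, namely $U_{T_n}(t,\xi)\le\delta_+$ for all $t\in\R$ and all $\xi\ge R_0$, with $R_0$ independent of $n$. Symmetrically, to get the lower trap $\underline U(\cdot-a_n)\le U_{T_n}$ via Lemma~\ref{TH2comparaison} you need $U_{T_n}\ge 1-\delta_-$ on $(-\infty,R_0]$. Either requirement is a \emph{uniform-in-$n$} decay of $U_{T_n}$ towards $0$ or $1$, and this is precisely what fails in the scenario you are trying to exclude: if $U^*\equiv\theta_g$ then $U_{T_n}(0,\xi)\to\theta_g$ for every fixed $\xi$, so the level sets $\{U_{T_n}(0,\cdot)=\delta_+\}$ escape to $+\infty$ and no uniform $R_0$ exists. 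Concretely, writing $\xi_n$ for the point where $U_{T_n}(0,\xi_n)=\theta_g/2$, any admissible $b_n$ must satisfy $b_n\ge\xi_n-\overline U^{-1}(\theta_g/2)$, and $\xi_n\to+\infty$ under the hypothesis $U^*\equiv\theta_g$. You correctly flag the boundedness of $(a_n),(b_n)$ as the delicate point, but the justification you suggest (``the $n$-uniform rate at which $U_{T_n}$ approaches $0$ and $1$'') is exactly the conclusion you want; the scaling $f^{T_n}(t,u)=f(t/T_n,u)$ makes $\delta_\pm$ uniform but says nothing about where $U_{T_n}$ enters the strip $[0,\delta_+]$.

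The paper breaks the circularity by a re-centring trick that does not need any uniform trap. Assuming $U^*\equiv\theta_g$, it shifts to $V_n(t,\xi)=U_{T_n}(t,\xi+\xi_n)$ with $U_{T_n}(0,\xi_n)=\theta_g/2$; the limit $V^*$ solves $(V^*)''+c^*(V^*)'+g(V^*)=0$, satisfies $V^*(0)=\theta_g/2$ and $V^*\le\theta_g$, hence connects $\theta_g$ to $0$. Multiplying by $(V^*)'$ and integrating gives $c^*\int_\R |(V^*)'|^2=\int_0^{\theta_g}g<0$, so $c^*<0$. The same manoeuvre at level $(1+\theta_g)/2$ produces a profile connecting $1$ to $\theta_g$ and forces $c^*>0$, a contradiction. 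This is the missing idea.
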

\begin{proof}
For all $n\geq0$, we have $U_{T_n}(0,0)=\theta_g$. This thus implies that $U^*(0)=\theta_g$.
The couple $(c^*,U^*)$ satisfies in the classical sense the equation $(U^*)''+c^*(U^*)'+g(U^*)=0$ on $\R$.
We are now going to justify that $U^*(-\infty)=1$ and that $U^*(+\infty)=0$. Knowing that $U^*(0)=\theta_g$, it could be  possible a priori that $U^*\equiv \theta_g$. We are however going to show bwoc that this situation cannot occur. We thus suppose that $U^*\equiv \theta_g$. As $(U_{T_n})_\xi<0$ on $\R$, there exists a unique positive real number $\xi_n$ such that $\textstyle{U_{T_n}(0,\xi_n)=\frac{\theta_g}{2}}$.
We thus define the function $V_n:\R^2\to\R$ by $V_n(t,\xi)=U_{T_n}(t,\xi+\xi_n)$. Consequently, we have
\begin{equation}
\label{eqfin}
\begin{cases} V_n(0,0)=\frac{\theta_g}{2}, \\ V_n(t,\xi)\leq U_{T_n}(t,\xi)~\text{on}~\R^2. \end{cases}
\end{equation}
The sequence $(V_n)_n$ converges up to extraction of a subsequence in $W^{1,2;p}_{\text{loc}}(\R^2)$ weakly and in $\mathcal{C}^{0,\alpha}_{\text{loc}}(\R^2)$ for any $1< p<+\infty$ and for any $\alpha \in (0,1)$ to a function $V^*\in \mathcal{C}^2(\R)$  satisfying
\begin{equation}
\label{signe}
(V^*)''+c^*(V^*)'+g(V^*)=0~~\text{on}~\R.
\end{equation}
If we pass to the limit as $n$ tends to the infinity in \eqref{eqfin}, we obtain that $$\begin{cases} V^*(0)=\frac{\theta_g}{2}, \\ V^*\leq \theta_g~\text{on}~\R. \end{cases}$$
Consequently, we have that $V^*(-\infty)=\theta_g~\text{and}~V^*(+\infty)=0$.
We multiply \eqref{signe} by $(V^*)'$, then we integrate on $\R$. It occurs that
$$c^*\int_{-\infty}^{+\infty}[(V^*)'(\xi)]^2d\xi=-\int_{-\infty}^{+\infty}g[(V^*)(\xi)](V^*)'(\xi) d\xi=\int_0^{\theta_g}g(s)ds<0.$$
So, $$c^*<0.$$
The same reasoning on the function $W_n$ defined by $W_n(t,\xi)=U_{T_n}(t,\xi+\tilde{\xi}_n)$ such that $\textstyle{W_n(0,0)=\frac{1+\theta_g}{2}}$ leads to the fact that $c^*$ has the same sign as $\textstyle{\int_{\theta_g} ^1 g(s) ds}$, that is $c^*>0$. We get to a contradiction. The situation $U^*\equiv \theta_g$ can not occur. Consequently,  $U^*$ is not identically equal in $\theta_g$. Applying the strong maximum principle, one gets
$$(U^*)'<0~~\text{on}~\R.$$
As $U^*(0)=\theta_g$, we have necessarily $U^*(-\infty)=1~\text{and}~U^*(+\infty)=0$. To summarize, the couple $(c^*,U^*)$ satisfies
$$\begin{cases}
(U^*)''+c^*(U^*)'+g(U^*)=0~~~~~\text{on}~\R,\\
U^*(-\infty)=1,~~U^*(+\infty)=0,\\
U^*(0)=\theta_g.
\end{cases}$$
Knowing that this problem admits a unique solution, il occurs that $c^*=c_g~\text{and}~U^*\equiv U_g$.
The uniqueness of accumulation point of $(c_{T_n})_n$ and $(U_{T_n})_n$ imply that $c_T\xrightarrow{T\to0}c_g$ and $U_T\xrightarrow{T\to0} U_g$ in $W^{1,2;p}_{\text{loc}}(\R^2)$ weakly and in $\mathcal{C}^{0,\alpha}_{\text{loc}}(\R^2)$ for any $1< p<+\infty$ and for any $\alpha \in (0,1)$. 
\end{proof}
\section{Pulsating fronts for small perturbations of the nonlinearity}
\subsection{Existence of  pulsating fronts}
This section is devoted to the proof of Theorems \ref{pertex} and \ref{pertconv}. We remind the reader that in this section, the $\text{Poincar\'e}$ map associated with the function $f^T$ has exactly two stable fixed points $0$ and $1$, and a unique  unstable fixed point $\alpha_0$ between both. According to \cite{ABC}, there exists a unique pulsating front $(c_T,U_T)$ solving \eqref{travel} with $U_T(0,0)=\alpha_0$. We call $w_0(t)$ the solution of the equation $y'=f^T(t,y)$ satisfying $y(0)=\alpha_0$. We saw in Section \ref{characterization} that $w_0$ is a $T$-periodic function. Furthermore, we have 
$$\lambda_{w_0,f^T}=-\frac{1}{T}\int_0^Tf_u(s,w_0(s))ds<0.$$
We give a corollary of the Gr\"onwall lemma.
\begin{lemma}
\label{gronwall}
Let $y:[a,b] \to \R$ a function of class $\mathcal{C}^1$ such that\\
$$\exists \alpha>0, \quad \exists \beta>0, \quad \forall t \in [a,b],\qquad |y'(t)| \leq \beta + \alpha|y(t)|.$$
So, we have
$$\forall t \in [a,b], \qquad |y(t)| \leq |y(a)|e^{\alpha(t-a)}+\frac{\beta}{\alpha}(e^{\alpha(t-a)}-1).$$ 
\end{lemma}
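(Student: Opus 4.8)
The plan is to reduce to the classical integrating-factor argument after first controlling $|y|$ by a monotone auxiliary function. I would define $z:[a,b]\to\R$ by $z(t)=|y(a)|+\int_a^t|y'(s)|\,ds$. This $z$ is nondecreasing and locally Lipschitz with $z'(t)=|y'(t)|$ almost everywhere, and the triangle inequality gives $|y(t)|=\big|y(a)+\int_a^t y'(s)\,ds\big|\leq |y(a)|+\int_a^t|y'(s)|\,ds=z(t)$ for every $t\in[a,b]$. The point of introducing $z$ rather than working with $|y|$ directly is that $y$ need not keep a constant sign, so $t\mapsto|y(t)|$ is merely Lipschitz; $z$ is the clean substitute.

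Next I would feed the hypothesis into this estimate: since $|y'(t)|\leq\beta+\alpha|y(t)|\leq\beta+\alpha z(t)$, we obtain the differential inequality $z'(t)\leq\beta+\alpha z(t)$ on $[a,b]$ (a.e.). The standard integrating-factor trick then finishes the proof: set $w(t)=e^{-\alpha(t-a)}z(t)$, so that $w'(t)=e^{-\alpha(t-a)}\big(z'(t)-\alpha z(t)\big)\leq\beta\, e^{-\alpha(t-a)}$ almost everywhere. Integrating from $a$ to $t$ and using $w(a)=z(a)=|y(a)|$ yields $e^{-\alpha(t-a)}z(t)\leq |y(a)|+\frac{\beta}{\alpha}\big(1-e^{-\alpha(t-a)}\big)$, and multiplying through by $e^{\alpha(t-a)}$ gives $z(t)\leq |y(a)|e^{\alpha(t-a)}+\frac{\beta}{\alpha}\big(e^{\alpha(t-a)}-1\big)$. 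Combining with $|y(t)|\leq z(t)$ gives exactly the claimed bound.

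There is essentially no obstacle here; the only mild subtlety is the sign-changing issue just mentioned, handled by the passage through $z$. If one prefers, one can instead observe $|y(t)|\leq |y(a)|+\int_a^t\big(\beta+\alpha|y(s)|\big)\,ds$ and quote the classical Grönwall lemma verbatim, but I would present the self-contained integrating-factor version since it requires no external reference beyond what is already standard.
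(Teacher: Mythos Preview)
Your argument is correct and is essentially the same as the paper's: both start from $|y(t)|\leq |y(a)|+\int_a^t|y'(s)|\,ds$ and then feed in the hypothesis. The paper simply writes $|y(t)|\leq |y(a)|+\beta(t-a)+\alpha\int_a^t|y(s)|\,ds$ and invokes the classical Gr\"onwall lemma as a black box, whereas you unpack that step via the auxiliary function $z$ and an integrating factor; your closing remark (``if one prefers\ldots quote the classical Gr\"onwall lemma verbatim'') is precisely what the paper does.
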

\begin{proof}
For all $t$ in $[a,b]$, we have\\
$|y(t)| \leq |y(a)|+|y(t)-y(a)| \leq  |y(a)|+\int_a^t |y'(s)|ds \leq  |y(a)|+\beta(t-a)+\int_a^t |y(s)|ds$.\\
We then apply the Gr\"onwall lemma.
\end{proof}
\noindent
We denote by $P_\e$ (resp. $P$) the $\text{Poincar\'e}$ map associated with $f^{T,\e}$ (resp. $f^T$). We are going to show here that $P_\e$ (resp. $P_\e')$ converges uniformly on [0,1] to $P$ (resp. $P'$) when $\e$ tends to $0$.
Let us specify that according to \eqref{01}, \eqref{benjamin}, \eqref{perturbation2} and the mean value theorem, we have 
\begin{equation}
\label{perturbation1}
|f^T(t,u)-f^{T,\e}(t,u)|\leq\omega(\e),~~\forall (t,u) \in [0,T] \times [0,1].
\end{equation}
\begin{proposition}
\label{label}
There exists three constants $C_1,~C_2$ and $C_3$ such that for any $\e>0$, we have\\
\begin{equation}
\label{Poincare}
\|P-P_\e\|_{\infty,[0,1]}<C_1\omega(\e).
\end{equation}
\begin{equation}
\label{Poincareder}
\|P'-P_\e'\|_{\infty,[0,1]}<C_2 \sinh(C_3 \omega(\e)).
\end{equation}
\end{proposition}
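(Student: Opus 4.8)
The plan is to estimate both $|P(\alpha)-P_\e(\alpha)|$ and $|P'(\alpha)-P_\e'(\alpha)|$ via Grönwall-type arguments applied to the ODE flows $w(\alpha,\cdot)$ and $w^\e(\alpha,\cdot)$ defining the two Poincaré maps, using the uniform closeness \eqref{perturbation1} of the nonlinearities and \eqref{perturbation2} of their $u$-derivatives. First I would fix $\alpha\in[0,1]$ and set $z(t)=w(\alpha,t)-w^\e(\alpha,t)$, where $w,w^\e$ solve $y'=f^T(t,y)$, resp. $y'=f^{T,\e}(t,y)$, with $y(0)=\alpha$. Then $z(0)=0$ and, writing $f^T(t,w)-f^{T,\e}(t,w^\e)=\big(f^T(t,w)-f^{T,\e}(t,w)\big)+\big(f^{T,\e}(t,w)-f^{T,\e}(t,w^\e)\big)$, the mean value theorem and \eqref{perturbation1} give $|z'(t)|\le \omega(\e)+\|f_u^{T,\e}\|_\infty|z(t)|$ on $[0,T]$. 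Since \eqref{perturbation2} bounds $\|f^{T,\e}_u\|_\infty$ by $\|f^T_u\|_\infty+\omega(\e)$, which is itself bounded for small $\e$ (say by a constant $L$), Lemma \ref{gronwall} with $a=0$, $b=T$, $\alpha=L$, $\beta=\omega(\e)$ yields $|z(T)|\le \frac{\omega(\e)}{L}(e^{LT}-1)=:C_1\omega(\e)$, uniformly in $\alpha\in[0,1]$. This is exactly \eqref{Poincare}.

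For \eqref{Poincareder} I would differentiate the flows in $\alpha$. As in the proof preceding Definition \ref{defPoin}, $v(t):=\partial_\alpha w(\alpha,t)$ solves $v'=f^T_u(t,w(\alpha,t))\,v$ with $v(0)=1$, and $v^\e(t):=\partial_\alpha w^\e(\alpha,t)$ solves $(v^\e)'=f^{T,\e}_u(t,w^\e(\alpha,t))\,v^\e$ with $v^\e(0)=1$; explicitly $v(t)=e^{\int_0^t f^T_u(s,w(\alpha,s))ds}$ and similarly for $v^\e$. Hence $P'(\alpha)=v(T)=e^{A}$ and $P_\e'(\alpha)=v^\e(T)=e^{A_\e}$, with $A=\int_0^T f^T_u(s,w(\alpha,s))ds$ and $A_\e=\int_0^T f^{T,\e}_u(s,w^\e(\alpha,s))ds$. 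Now estimate $|A-A_\e|$ by splitting the integrand: $|f^T_u(s,w)-f^{T,\e}_u(s,w^\e)|\le |f^T_u(s,w)-f^{T,\e}_u(s,w)|+|f^{T,\e}_u(s,w)-f^{T,\e}_u(s,w^\e)|\le \omega(\e)+\|f^{T,\e}_{uu}\|_\infty\,|w-w^\e|$, using \eqref{perturbation2} for the first term and the $\mathcal{C}^2$-in-$x$ regularity for the second, together with the bound $|w-w^\e|\le C_1\omega(\e)$ just obtained. Integrating over $[0,T]$ gives $|A-A_\e|\le C\,\omega(\e)$ for some constant $C$. Finally, since $A$ and $A_\e$ both lie in a fixed bounded interval $[-TL,TL]$, the elementary inequality $|e^A-e^{A_\e}|\le e^{\max(|A|,|A_\e|)}\,|e^{A-A_\e}-1|\le e^{TL}\cdot 2\sinh\!\big(\tfrac12|A-A_\e|\big)$ (or more directly $|e^A-e^{A_\e}|=e^{(A+A_\e)/2}\cdot 2|\sinh((A-A_\e)/2)|\le e^{TL}\,2\sinh(C\omega(\e)/2)$) furnishes \eqref{Poincareder} with $C_2=2e^{TL}$ and $C_3=C/2$; monotonicity of $\omega$ and of $\sinh$ on $[0,\infty)$ makes the bound uniform in $\alpha$.

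The only slightly delicate point is making sure all the constants ($L$ bounding $\|f^{T,\e}_u\|_\infty$, the bound on $\|f^{T,\e}_{uu}\|_\infty$, and the resulting $C_1,C,TL$) are genuinely independent of $\e$ and of $\alpha\in[0,1]$; this is guaranteed by \eqref{perturbation2}, by the assumed uniform $\mathcal{C}^2$-regularity in $x$ of the family $f^{T,\e}$ stated at the start of Section 6, and by $\omega$ being bounded with $\omega(\e)\to 0$, so that for $\e$ small the perturbed coefficients stay in a fixed compact set of $\mathcal{C}^1$-data. I do not expect any real obstacle beyond this bookkeeping; the estimates are all Grönwall plus the mean value theorem. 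One should also recall that $w(\alpha,\cdot)$ and $w^\e(\alpha,\cdot)$ stay in $[0,1]$ (by \eqref{01}, \eqref{benjamin} and the maximum principle for the scalar ODE), so that all evaluations of $f^T_u$, $f^{T,\e}_u$, $f^{T,\e}_{uu}$ occur on the compact set $[0,T]\times[0,1]$ where the relevant sup-norms are finite.
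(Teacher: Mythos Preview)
Your approach is essentially the same as the paper's: Gr\"onwall for the first bound, and for the second bound the explicit formula $P'(\alpha)=e^{A}$, $P_\e'(\alpha)=e^{A_\e}$ together with the identity $e^A-e^{A_\e}=2e^{(A+A_\e)/2}\sinh\!\big((A-A_\e)/2\big)$, followed by an estimate of $|A-A_\e|$ in terms of $\omega(\e)$.

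There is one small but genuine slip in your treatment of $|A-A_\e|$. You split
\[
|f^T_u(s,w)-f^{T,\e}_u(s,w^\e)|\le |f^T_u(s,w)-f^{T,\e}_u(s,w)|+|f^{T,\e}_u(s,w)-f^{T,\e}_u(s,w^\e)|
\]
and bound the second term by $\|f^{T,\e}_{uu}\|_\infty\,|w-w^\e|$. This requires a bound on $\|f^{T,\e}_{uu}\|_\infty$ that is \emph{uniform in $\e$}, and no such bound is among the hypotheses: ``having the same regularity as $f^T$'' only says each $f^{T,\e}$ is $\mathcal C^2$ in $u$, and \eqref{perturbation2} controls $f^{T,\e}_u$, not $f^{T,\e}_{uu}$. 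The paper sidesteps this by splitting the other way,
\[
|f^T_u(s,w)-f^{T,\e}_u(s,w_\e)|\le |f^T_u(s,w)-f^T_u(s,w_\e)|+|f^T_u(s,w_\e)-f^{T,\e}_u(s,w_\e)|,
\]
so the Lipschitz step uses only $\|f^T_{uu}\|_\infty$ (a single fixed function) together with the already-established bound $|w-w_\e|\le C_1\omega(\e)$, and the second term is handled directly by \eqref{perturbation2}. With this rearrangement your argument goes through verbatim and matches the paper's. (Your splitting in the first part is harmless, since there only a uniform bound on $\|f^{T,\e}_u\|_\infty$ is needed, and that does follow from \eqref{perturbation2}.)
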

\begin{proof}
We show the first relation. Let $\alpha \in [0,1]$ and $\e>0$. We define $u_\e=w(\alpha,\cdot)-w_\e(\alpha,\cdot)$. There exist two positive constants $C$ and $C'$ such that for any $t \in [0,T]$, we have 
\begin{align}
|u_\e '(t)|  & = |f^T(t,w(\alpha ,t))-f^{T,\e}(t,w_\e (\alpha ,t))| \nonumber\\
& \leq |f^T(t,w(\alpha ,t))-f^T(t,w_\e (\alpha ,t))|+|f^T(t,w_\e (\alpha ,t))-f^{T,\e}(t,w_\e (\alpha ,t))| \nonumber\\
 & \leq C|u_\e(t)|+C'\omega(\e)\nonumber ~~(\text{by}~\eqref{perturbation1}).
\end{align}
We apply Lemma \ref{gronwall}. For all $t\in[0,T]$, we have
$|u_\e(t)|\leq \frac{C'\omega(\e)}{C}(e^{Ct}-1)$.\\ 
Consequently, if we take $t=T$, and if we define $C_1= \frac{C'}{C}(e^{CT}-1)$, then we conclude 
$$|P(\alpha)-P_\e(\alpha)|<C_1\omega(\e).$$
We show the second relation. Let $\alpha \in [0,1]$ and $\e>0$.
$$P'(\alpha)-P_\e'(\alpha) = e^{ \int_0^T f^T_u(s,w(\alpha,s)) ds}-e^{\int_0^T f^{T,\e}_u(s,w_\e(\alpha,s)) ds }$$
$$=   \underbrace{e^{\frac{1}{2}\big( \int_0^T f^T_u(s,w(\alpha,s)) ds + \int_0^T f^{T,\e}_u(s,w_\e(\alpha,s)) ds \big)}}_{(I)} \times 2\sinh\big(\frac{1}{2}\underbrace{\int_0^T f^T_u(s,w(\alpha,s))-f^{T,\e}_u(s,w_\e(\alpha,s))ds}_{(II)}\big).$$
Knowing that $w_\e(\alpha,\cdot)\in [0,1]$ and that $f_u^{T,\e}$ is bounded independently of $\e$ on $[0,T] \times [0,1]$ (because $w_\e$ is bounded), the term $(I)$ can be bounded by a constant which we shall denote $C_2$. Let us now consider the term $(II)$\\
$(II)  = \int_0^T f^T_u(s,w(\alpha,s))-f^T_u(s,w_\e(\alpha,s)) ds +\int_0^T  f^T_u(s,w_\e(\alpha,s))-f^{T,\e}_u(s,w_\e(\alpha,s)) ds.$ 
\vspace{0.5em}\\
According to \eqref{Poincare} and the mean value theorem, there exists $M>0$ such that\\
$$f^T_u(s,w(\alpha,s))-f^T_u(s,w_\e(\alpha,s)) \leq  M |w(\alpha,s)-w_\e(\alpha,s)| \leq M C_1\omega(\e).$$
Furthermore, according to \eqref{perturbation2}, we have\\
$$f^T_u(s,w_\e(\alpha,s))-f^{T,\e}_u(s,w_\e(\alpha,s))\leq \omega(\e).$$
Consequently, $$(II) \leq (M C_1+1)T\omega(\e).$$ \\
We define $C_3=(M C_1+1)T$. Finally, we have
$$P'(\alpha)-P'_\e(\alpha) \leq C_2 \sinh(C_3\omega(\e)),$$
and the proof of Proposition \ref{label} is complete.
\end{proof}
\begin{proposition}
There exists $\e_0>0$ small enough such that for all $\e\in(0,\e_0)$, $P_\e$ has exactly two stable fixed points $0$ et $1$ and a unique unstable fixed point $\alpha_0^\e \in (0,1)$.
\end{proposition}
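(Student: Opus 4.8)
The plan is to argue by contradiction, using the uniform convergences $P_\e \to P$ and $P_\e' \to P'$ on $[0,1]$ furnished by Proposition \ref{label} (recall that $\sinh(C_3\omega(\e)) \to 0$ and $\omega(\e)\to 0$), together with the homogenization-type device of Section \ref{sectionpetite} — dividing the ODE $y' = f^{T,\e}(t,y)$ by $y$, respectively by $y-1$ — to rule out interior fixed points accumulating at the endpoints. Suppose the statement is false. Then there is a sequence $\e_n \to 0^+$ such that, for every $n$, the map $P_{\e_n}$ fails to have exactly the two stable fixed points $0,1$ and a unique unstable fixed point in $(0,1)$; I will show that this property does hold for $n$ large, which is the contradiction.

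First I would note that $0$ and $1$ are fixed points of every $P_{\e_n}$, since by \eqref{benjamin} the constants $0$ and $1$ solve $y'=f^{T,\e_n}(t,y)$; moreover they are stable for $n$ large, because $P_{\e_n}'(0) \to P'(0) < 1$ and $P_{\e_n}'(1) \to P'(1) < 1$ by \eqref{Poincareder} (using that $0$ and $1$ are stable fixed points of $P$). Setting $\Phi_{\e_n} = P_{\e_n} - \mathrm{id}$, this gives $\Phi_{\e_n}'(0) < 0$ and $\Phi_{\e_n}'(1) < 0$ for $n$ large, hence $\Phi_{\e_n}$ is negative just to the right of $0$ and positive just to the left of $1$; by the intermediate value theorem $P_{\e_n}$ then has at least one fixed point in $(0,1)$.

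The central point is that, for $n$ large, every fixed point of $P_{\e_n}$ in $(0,1)$ is unstable. If not, after extraction there are $\alpha_n \in (0,1)$ with $P_{\e_n}(\alpha_n)=\alpha_n$, $P_{\e_n}'(\alpha_n) \le 1$, and $\alpha_n \to \bar\alpha \in [0,1]$; passing to the limit with Proposition \ref{label} yields $P(\bar\alpha)=\bar\alpha$ and $P'(\bar\alpha) \le 1$, so $\bar\alpha \in \{0,1\}$ because the only fixed points of $P$ are $0,\alpha_0,1$ and $P'(\alpha_0) > 1$. Say $\bar\alpha = 0$. Put $v_n(s) = w_{\e_n}(\alpha_n,s)$; uniqueness for the ODE forces $0 < v_n < 1$, while $v_n(0) = v_n(T) = \alpha_n$. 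From $|v_n'| \le K v_n$ with $K := \|\omega\|_\infty + \|f_u^T\|_{\infty,[0,T]\times[0,1]}$ independent of $n$, Grönwall gives $v_n \to 0$ uniformly on $[0,T]$. Dividing $v_n' = f^{T,\e_n}(s,v_n)$ by $v_n$ and integrating over $[0,T]$, using $f^{T,\e_n}(s,0)=0$, we get $0 = \log v_n(T) - \log v_n(0) = \int_0^T \frac{f^{T,\e_n}(s,v_n(s))}{v_n(s)}\,ds = \int_0^T \int_0^1 f_u^{T,\e_n}(s,\theta v_n(s))\,d\theta\,ds$; the inner integral converges uniformly in $s$ to $f_u^T(s,0)$ by \eqref{perturbation2} and the uniform continuity of $f_u^T$ on the compact $[0,T]\times[0,1]$, so in the limit $\int_0^T f_u^T(s,0)\,ds = 0$, contradicting \eqref{bismoyT}. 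The case $\bar\alpha = 1$ is symmetric: divide by $v_n - 1$, use $f^{T,\e_n}(s,1)=0$, and reach $\int_0^T f_u^T(s,1)\,ds = 0$.

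It then only remains to see that, for $n$ large, $P_{\e_n}$ has at most one fixed point in $(0,1)$: if $\alpha_1 < \alpha_2$ were two of them, then by the previous step $\Phi_{\e_n}'(\alpha_1) > 0$ and $\Phi_{\e_n}'(\alpha_2) > 0$, so $\Phi_{\e_n}$ is positive just past $\alpha_1$ and negative just before $\alpha_2$, hence vanishes at some $\alpha_3 \in (\alpha_1,\alpha_2)$ with $\Phi_{\e_n}'(\alpha_3) \le 0$, i.e.\ an interior fixed point with $P_{\e_n}'(\alpha_3) \le 1$, a contradiction. Combining with the existence from the second paragraph, for $n$ large $P_{\e_n}$ indeed has exactly two stable fixed points $0,1$ and exactly one (unstable) interior fixed point $\alpha_0^\e$, contradicting the choice of $(\e_n)$; this proves the proposition. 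The only genuinely delicate step is excluding interior fixed points accumulating at $0$ or $1$ — handled by the ODE-dividing argument above, in the spirit of Section \ref{sectionpetite} — since on any compact subinterval of $(0,1)$ and near $\alpha_0$ everything is controlled directly by the uniform convergences $P_\e \to P$ and $P_\e' \to P'$.
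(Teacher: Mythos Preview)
Your proof is correct and complete, but it follows a different path from the paper's. The paper argues directly by partitioning $[0,1]$: since $P'(0)<1$, $P'(1)<1$ and $P'(\alpha_0)>1$, one fixes intervals $[0,x_0]$, $[x_1,1]$, $[\alpha_0-\eta,\alpha_0+\eta]$ on which $P'<1$, $P'<1$, $P'>1$ respectively, and the uniform convergence $P_\e'\to P'$ from Proposition~\ref{label} transfers these strict inequalities to $P_\e'$ for $\e$ small. Near the endpoints the paper then uses only the mean value theorem: for $\alpha\in(0,x_0]$ one has $P_\e(\alpha)=P_\e(0)+\alpha P_\e'(\hat\alpha)<\alpha$, so there are no interior fixed points there (and symmetrically on $[x_1,1)$). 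On the compact complement $[x_0,\alpha_0-\eta]\cup[\alpha_0+\eta,x_1]$ one uses that $|P-\mathrm{id}|$ is bounded below by some $\nu_0>0$ together with $P_\e\to P$ uniformly. Uniqueness on $[\alpha_0-\eta,\alpha_0+\eta]$ then comes, as in your last paragraph, from $P_\e'>1$ there.

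Your compactness/contradiction route, with the ODE-dividing device borrowed from Section~\ref{sectionpetite} to rule out interior fixed points accumulating at $0$ or $1$, is valid but heavier than needed: the paper's one-line mean-value step near the endpoints already disposes of this case, requiring only $P_\e'<1$ on $[0,x_0]$ and $P_\e(0)=0$, both of which are immediate from Proposition~\ref{label} and \eqref{benjamin}. Your version has the modest advantage of not explicitly carving out the middle compact intervals, and it recycles the machinery of the small-period section; the price is the detour through Gr\"onwall and the integral identity, where the paper's direct localisation is the more economical argument.
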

\begin{proof}
We begin by making some remarks on $P$. As $P'(0)<1$ (resp. $P'(1)<1$), by continuity there exists an interval $[0,x_0]$ (resp. $[x_1,1]$) on which $P'<1$ (resp. $P'<1$).  As $P'(\alpha_0)>1$, there exists an interval $[\alpha_0-\eta,\alpha_0+\eta]$ on which $P'>1$. 
\vspace{0.5em}\\
The properties of $P$ above can be extend to $P_\e$. Indeed, according to \eqref{Poincare} and \eqref{Poincareder}, there exists $\e_0>0$ such that for any $\e \in(0,\e_0)$, $P_\e'<1$  on $[0,x_0]$, $P_\e'<1$  on $[x_1,1]$, and $P_\e'>1$ on $[\alpha_0-\eta,\alpha_0+\eta]$. Consequently, for $\e \in (0,\e_0)$, as $P_\e(0)=P_\e(1)=0$, the function $P_\e$ has necessarily another fixed point $\alpha_0^\e$ between both.
\vspace{0.5em}\\
Let $\e\in(0,\e_0)$. The function $P_\e$ has no fixed point on $[0,x_0]$ and on $[x_1,1]$. We make the proof for $[0,x_0]$. Let $\alpha \in [0,x_0]$. According to Taylor-Lagrange's formula, we have $P_\e(\alpha)=P_\e(0)+\alpha P_\e'(\hat{\alpha})$, where $\hat{\alpha} \in [0,\alpha] \subset  [0,x_0]$. According to the fact that $P_\e(0)=0$ and $P_\e'(\hat{\alpha})<1$, we have $P_\e(\alpha)<\alpha$.
\vspace{0.5em}\\
The function $P_\e$ has no fixed point on $[x_0,\alpha_0-\eta]$ and on $[\alpha_0+\eta,x_1]$. Indeed, we can find $\nu_0$ such that for any $ \alpha \in [x_0,\alpha_0-\eta]$, we have $P(\alpha)-\alpha>\nu_0$. Consequently, according to \eqref{Poincare}, even if it means setting $\e_0$ smaller, we can prove that for any $ \alpha$ in $[x_0,\alpha_0-\eta]$, we have $\textstyle{P_\e(\alpha)-\alpha>\frac{\nu_0}{2}}$.
\vspace{0.5em}\\
Finally, $P_\e$ has a unique fixed point (on $[\alpha_0-\eta,\alpha_0+\eta]$). Indeed, if there was others, we would have the existence of a point $\alpha_1^\e$ for which $P_\e'(\alpha_1^\e)<1$. It is contradictory to the fact that $P_\e'>1$ on $[\alpha_0-\eta,\alpha_0+\eta]$.
\end{proof}
\noindent
Consequently, if $\e \in (0,\e_0)$, then we are under the hypotheses of the existence and uniqueness theorem of Alikakos, Bates and Chen \cite{ABC}. In particular, there exists a pulsating front $(U_\e,c_\e)$ solving \eqref{travelepsilon}.
\subsection{Convergence of the couple $(c_\e,U_\e)$ as $\e \to 0$.}
This subsection is devoted to the proof of Theorem \ref{pertconv}. By Theorem \ref{pertex}, for $\e \in (0,\e_0)$, there exists a pulsating front $(U_\e,c_\e)$ solving \eqref{travelepsilon}. As in Section \ref{sectionpetite}, we can show there exists a couple $(c^*,U^*)$ such that as $\e \to 0$, $c_\e$ converges to $c^*$ and $U_\e$ converges to $U^*$ in $W^{1,2;p}_{\text{loc}}(\R^2)$ weakly and in $\mathcal{C}^{0,\alpha}_{\text{loc}}(\R^2)$ for any $1< p < +\infty$ and any $\alpha \in (0,1)$. We prove then that  $(c^*,U^*)$ is solution, at first in the sense of distributions, but also  in the classical sense of the equation 
$$(U^*)_t-c^*(U^*)_\xi-(U^*)_{\xi \xi} = f^T(t,U^*),~~\text{on}~\R^2.$$
We also have that $U^*(T,\cdot)=U^*(0,\cdot)~\text{on}~\R,$ and $U^*(0,0)=\alpha_0.$ Consequently, if we prove that $U^*(\cdot,-\infty)=1~~\text{and}~~U^*(\cdot,+\infty)=0~~\text{uniformly on}~\R$, 
then the couple $(c^*,U^*)$ solves Problem \eqref{travel} with $U^*(0,0)=\alpha_0$. By uniqueness, we shall have $c^*=c_T~\text{and}~U^*=U_T.$
We can not prove this result in the same way as in Section \ref{sectionpetite} because here, we do not know the sign of $c_T$. We use a technique which comes from Lemma $6.5$ of \cite{BH02} where they are interested in the exponential behavior of the front at infinity. After, we shall denote 
$$U_\pm^*(t)=\lim\limits_{\xi \to \pm \infty} U^*(t,\xi).$$
Knowing that $U^*(0,0)=w_0(0)$ and $\partial_\xi U^*\leq 0$, we have a priori that $U_+^*\equiv w_0~\text{or}~U_+^*\equiv 0$. We suppose at first that $U_+^*\equiv w_0$. So, we have that $U^*\geq w_0$. Since the two functions are equal on $(0,0)$, the strong maximum principle implies that 
$$U^*(t,\xi) = w_0(t),~~~\forall t\leq 0,~~\forall \xi \in \R.$$
By periodicity, the equality is also true when $t$ is positive. Consequently $U^* \equiv w_0$ on $\R^2$. As $\partial_\xi U_\e<0$, there exists a unique positive real number $\xi_\e$ such that $\textstyle{U_\e(0,\xi_\e)=\frac{\alpha_0}{2}}$. We define the function $V_\e:\R^2\to\R$ by $V_\e(t,\xi):=U_\e(t,\xi+\xi_\e).$ We have so 
$$\begin{cases} V_\e(0,0)=\frac{\alpha_0}{2}, \\ V_\e \leq U_\e~~\text{on}~\R^2, \\ \partial_\xi V_\e <0~~\text{on}~\R^2.\end{cases}$$
We can show that $V_\e$ converges to a function $V^*$ in $W_{loc}^{1,2;p}(\R^2)$ weakly and in $\mathcal{C}^{0,\alpha}_{loc}(\R^2)$ for $1<p\leq+\infty$ and $\alpha \in (0,1)$. Actually, $V^*$ is in $\mathcal{C}^{1,2}(\R^2)$ and satisfies in the classical sense $\partial_tV^*-c^* \partial_\xi V^*-\partial_{\xi \xi}V^*=f^T(t,V^*)~\text{on}~\R^2$.
Furthermore, we have 
$$\begin{cases} V^*(0,0)=\frac{\alpha_0}{2}, \\ V^* \leq U^* \equiv w_0~~\text{on}~\R^2, \\ \partial_\xi V^* < 0~~\text{on}~\R^2.\end{cases}$$
The last inequality is a consequence of the strong maximum principle. We define the function $Z:\R^2 \to \R$ by $Z(t,\xi)=w_0(t)-V^*(t,\xi)$. The function $Z$ is positive and satisfies 
$$\partial_t Z-c^*\partial_\xi Z-\partial_{\xi\xi} Z=\alpha(t,\xi)Z~\text{on}~\R^2,$$
where $\alpha(t,\xi)=\begin{cases}\frac{f^T(t,w_0(t))-f^T(t,V^*(t,\xi))}{w_0(t)-V^*(t,\xi)}~~\text{if}~ w_0(t)\neq V^*(t,\xi), \\  f^T_u(t,w_0(t)) \hspace{8.2em} \text{otherwise}. \end{cases}$ 
\vspace{0.5em}\\
We also have  
$$Z(t+T,\xi)=Z(t,\xi),~~~~\forall t \in \R,~~\forall \xi \in \R.$$
We are going to show that $\textstyle{\frac{Z_\xi}{Z}}$ is bounded on $\R^2$. Let $(t,\xi) \in \R^2$. Standard parabolic estimates imply that there exists a constant $C_1$ independent of $(t,\xi)$ (because the function $c$ is bounded) such that 
\begin{equation}
\label{PAR}
0 \leq Z_\xi(t,\xi) \leq C_1 \max_{[t-1,t]\times[\xi-1,\xi+1]}Z.
\end{equation}
Furthermore, the parabolic Harnack's inequality implies that there exists a constant $C_2$ independent of $(t,\xi)$ (because the function $c$ is bounded) such that 
\begin{equation}
\label{HAR}
\max_{[t-1,t]\times[\xi-1,\xi+1]}Z \leq C_2Z(t+T,\xi)=C_2Z(t,\xi).
\end{equation}
Inequalities \eqref{PAR} and \eqref{HAR} imply that 
$$0<\frac{Z_\xi}{Z}(t,\xi)<C_1C_2.$$
Consequently $\limsup_{\xi \to-\infty ,~~ t\in \R}\frac{Z_\xi}{Z}(t,\xi)$ is a finite and non negative real number called $\beta$. So, there exists two sequences $(t_n)_n$ and $(\xi_n) _n$ with $\xi_n \xrightarrow{n \to +\infty} -\infty$ such that 
\begin{equation}
\label{SUITE}
\lim\limits_{n\to+\infty}\frac{Z_\xi}{Z}(t_n,\xi_n)=\beta.
\end{equation}
Let us note that we can show in the same way that the quotient $\frac{Z_t}{Z}$ is bounded in $\R^2$.\\
Let $n\in \N$. We write $t_n=k_nT+t_n'$, where $k_n$ is an integer and $t'_n$ satisfies $|t_n'|<T$. We define the function $h_n:\R^2\to\R$ by
$$h_n(t,\xi):=\frac{Z(t,\xi+\xi_n)}{Z(t_n,\xi_n)}.$$
Let us show that the function $h_n$ is locally bounded. Let $K$ be a compact set of $\R^2$, and $(t,\xi)\in K$.
\begin{flalign*}
|\ln(h_n(t,\xi))| & =|\ln(Z(t,\xi+\xi_n))-\ln(Z(t_n,\xi_n))| \\
& =|\ln(Z(t+k_nT,\xi+\xi_n))-\ln(Z(t_n,\xi_n))| \\
& \leq  \|(\log(Z))_t\|_\infty |t-t_n'| + \|(\log(Z))_\xi\|_\infty |\xi| .
\end{flalign*}
Let us specify that $\|(\log(Z))_t\|_\infty$ and $ \|(\log(Z))_\xi\|_\infty$ are well defined because the two quotients $\textstyle{\frac{Z_\xi}{Z}}$ and $\textstyle{\frac{Z_t}{Z}}$ are bounded on $\R^2$. Consequently, the quantity above is bounded because $(t,\xi)\in K$ and $|t_n'|<T$. We have 
\begin{equation}
\label{hn}
(h_n)_t-c^*(h_n)_\xi-(h_n)_{\xi\xi}=\alpha_n(t,\xi)h_n~~\text{on}~\R^2,
\end{equation}
\vspace{-0.7em}\\
where $\alpha_n(t,\xi)=\frac{f^T(t,w_0(t))-f^T(t,V^*(t,\xi+\xi_n))}{w_0(t)-V^*(t,\xi+\xi_n)}$.
\vspace{0.2em}\\
The sequence $(\alpha_n)_n$ is bounded by $\textstyle \max_{(t,u) \in [0,T] \times [0,1]}{|f^T_u|}$ in $L^\infty_{loc}(\R^2)$ independently of $n$. Furthermore $(h_n)_n$ is locally bounded. Consequently, we can show that $(h_n)_n$ converges to a function $h$ in $W^{1,2;p}_{\text{loc}}(\R^2)$ weakly and in $\mathcal{C}^{0,\alpha}_{\text{loc}}(\R^2)$ for any $1 < p <+\infty$ and for any $\alpha \in (0,1)$. Actually, passing to the limit in the equation \eqref{hn}, $h$ satisfies (at first in the sense of distributions, but in fact in the classical sense using standard parabolic estimates) 
\begin{equation}
\label{h}
h_t-c^*h_\xi-h_{\xi\xi}=f_u^T(t,w_0(t))h~~\text{on}~\R^2.
\end{equation}
We have 
$$\frac{(h_n)_\xi}{h_n}(t,\xi)=\frac{Z_\xi}{Z}(t,\xi+\xi_n),~~\forall t\in \R,~~\forall \xi\in \R.$$
As $\xi_n \xrightarrow{n \to \infty}0$, we have that 
$$\lim\limits_{n \to +\infty}\frac{Z_\xi}{Z}(t,\xi+\xi_n) \leq \limsup\limits_{\xi \to -\infty~;~t\in \R}\frac{Z_\xi}{Z}(t,\xi):=\beta,~~\forall t\in \R,~~\forall \xi\in \R.$$
So
$$\lim\limits_{n \to \infty} \frac{(h_n)_\xi}{h_n}(t,\xi) \leq \beta,~~\forall t\in \R,~~\forall \xi\in \R.$$
We also have$$\lim\limits_{n \to \infty} \frac{(h_n)_\xi}{h_n}(t,\xi)=\frac{h_\xi}{h}(t,\xi),~~\forall t\in \R,~~\forall \xi\in \R.$$
Consequently
\begin{equation*}
\frac{h_\xi}{h}(t,\xi) \leq \beta,~~\forall t\in \R,~~\forall \xi\in \R.
\end{equation*}
Furthermore, first we have that
$$\frac{(h_n)_\xi}{h_n}(t_n',0)=\frac{Z_\xi}{Z}(t_n,\xi_n) \xrightarrow{n \to +\infty} \beta,$$
and second, as $(t_n')_n$ is bounded, it  converges up to extraction of a subsequence to a constant $t_0\in \R$. Consequently
\begin{equation*}
\frac{h_\xi}{h}(t_0,0) = \beta.
\end{equation*}
The function $\frac{h_\xi}{h}$ satisfies on $\R^2$ the equation
$$(\frac{h_\xi}{h})_t-c^*(\frac{h_\xi}{h})_\xi-(\frac{h_\xi}{h})_{\xi\xi}-2(\frac{h_\xi}{h})_\xi(\frac{h_\xi}{h})=0.$$
So, applying the maximum principle and using the $T$-periodicity, we obtain that
$$h_\xi(t,\xi)=\beta h(t,\xi),~~~~\forall t \in \R,~~\forall \xi \in \R.$$
Consequently, there exists a $T$-periodic positive function $\Gamma(t)$ such that
$$h(t,\xi)=e^{\beta \xi}\Gamma(t),~~~~\forall t \in \R,~~\forall \xi \in \R.$$
We put back the previous expression in \eqref{h}, then we simplify by $e^{\beta \xi}$. We obtain
$$\Gamma'(t)-\beta c^* \Gamma(t)- \beta^2 \Gamma(t) =f_u^T(t,w_0(t))\Gamma(t),~~~~\forall t \in \R.$$
We divide then by $\Gamma(t)$ and we integrate between $0$ and $T$
$$\int_0^T \frac{\Gamma'(t)}{\Gamma(t)}dt - \beta T c^* -\beta^2 T = \int_0^T f_u^T(t,w_0(t))dt.$$
According to the fact that $\textstyle{\int_0^T \frac{\Gamma'(t)}{\Gamma(t)}dt=0}$, we have
$$\beta^2 + c^*\beta - \lambda_{w_0,f^T}=0.$$
Consequently, $\beta$ is a a non negative root of $X^2+c^*X+\lambda_{w_0,f^T}=0$.
\vspace{1em}\\
As $\partial_\xi U_\e<0$, there exists a unique negative $\overline{\xi}_\e$ such that $U_\e(0,\overline{\xi}_\e)=\frac{1+\alpha_0}{2}$. We define the function $W_\e:\R^2\to\R$ by $W_\e(t,\xi):=U_\e(t,x+\overline{\xi}_\e)$
We have
$$\begin{cases} W_\e(0,0)=\frac{\alpha_0+1}{2}, \\ W_\e \geq U_\e~~\text{on}~\R^2, \\ \partial_\xi W_\e <0~~\text{on}~\R^2.\end{cases}$$
The function $W_\e$ converges to a function $W^*$ in $W_{loc}^{1,2;p}(\R^2)$ weakly and in $\mathcal{C}^{0,\alpha}_{\text{loc}}(\R^2)$ for $1<p<+\infty$ and $\alpha \in (0,1)$. Furthermore, $W^*$ is in $\mathcal{C}^{1,2}(\R^2)$ and satisfies $\partial_tW^*-c^* \partial_\xi W^*-\partial_{\xi \xi}W^*=f^T(t,W^*)~\text{on}~\R^2.$
Consequently, we have
$$\begin{cases} W^*(0,0)=\frac{\alpha_0+1}{2}, \\ W^* \geq U^* \equiv w_0~~\text{on}~\R^2, \\ \partial_\xi W^* < 0~~\text{on}~\R^2.\end{cases}$$
The fact that the last inequality is strict is a consequence of the strong maximum principle. We define $\overline{Z}(t,\xi)=w_0(t)-W^*(t,\xi)$. We show using parabolic standards estimates and the parabolic Harnack's inequality that the the negative function $\textstyle{\frac{\overline{Z}_\xi}{\overline{Z}}}$ is bounded on $\R^2$. Consequently $\textstyle{\limsup\limits_{\xi \to-\infty ~;~ t\in \R}\frac{\overline{Z}_\xi}{\overline{Z}}(t,\xi)}$ is a finite and a non positive real called $\delta$. As previously, we also demonstrate that 
$\delta$ is a non positive root of $X^2+c^*X-\lambda_{w_0,f^T}$. Finally, the product $\beta \delta$ is a non positive real number, but  it is equal to $-\lambda_{w_0,f^T}$ which is by hypothesis a positive real number.
So, we have a contradiction, and the hypothetis $U^*_+ \equiv w_0$ is not valid. So we have that $U_+^* \equiv 0.$ Consequently, we have $U_-^* \equiv 1$. Indeed, If it was not the case, we would have $U_-^* \equiv w_0$. So we would have
$$U^*(t,\xi) \leq w_0(t),~~~~\forall t \in \R,~~\forall \xi \in \R .$$
Yet, the two functions are equal on $(0,0)$, so, by the strong maximum principle we would have
$$U^*(t,\xi)=w_0(t),~~~~\forall t\leq0,~~ \forall \xi \in \R .$$
By periodicity, the equality would be true for all $t\in\R$. This would contradict the fact that $U_+^* \equiv 0$.
\vspace{1em}\\
So we have 
$$U^*(\cdot,-\infty)=1~~\text{and}~~U^*(\cdot,+\infty)=0,$$
and the proof of Theorem \ref{pertconv} is complete.

\section*{Acknowledgements}
This work has been carried out in the framework of Archimedes LabEx (ANR-11-LABX-0033) and of the A*MIDEX project (ANR-11-IDEX-0001-02), funded by the ''Investissements d'Avenir'' French Government program managed by the French National Research Agency (ANR). The research leading to these results has received funding from the European Research Council under the European Union's Seventh Framework Programme (FP/2007-2013) / ERC Grant Agreement n.321186 - ReaDi - Reaction-Diffusion Equations, Propagation and Modelling, and from the ANR project NONLOCAL (ANR-14-CE25-0013).

\bibliographystyle{plain}
\bibliography{Biblio1}

\end{document}